\theoremstyle{plain}
\newtheorem{thm}{Theorem}[section]
\newtheorem{lem}[thm]{Lemma}
\newtheorem{prop}[thm]{Proposition}
\newtheorem{cor}[thm]{Corollary}
\newtheorem{assu}[thm]{Assumption}
\theoremstyle{definition}
\newtheorem{defn}[thm]{Definition}
\newtheorem{nota}[thm]{Notation}
\theoremstyle{remark}
\newtheorem{rem}[thm]{Remark}
\numberwithin{equation}{section}
\newcommand{\RR}{\mathbb{R}}
\newcommand{\PP}{\mathbb{P}}
\newcommand{\ud}{\mathrm{d}}
\newcommand{\half}{\frac{1}{2}}
\newcommand{\EPS}{\varepsilon}
\newcommand{\R}{\mathbb{R}}
\newcommand{\F}{\mathscr{F}}
\newcommand{\SET}[1]{\left\{#1\right\}}
\newcommand{\E}{\mathbb{E}}
\newcommand{\NORM}[1]{\Vert#1\Vert}
\newcommand{\A}{\mathscr{A}}
\newcommand{\transpose}{^{\operatorname{T}}}
\newcommand{\Tr}{\text{Tr}}
\newcommand{\N}{\mathbb{N}}
\newcommand{\X}{\mathscr{X}}
\title{Finite-Agent Stochastic Differential Games on Large Graphs: I. The Linear-Quadratic Case}
\author{Ruimeng Hu\thanks{Department of Mathematics, and Department of Statistics and Applied Probability, University of California, Santa Barbara, CA 93106-3080, {\em rhu@ucsb.edu}.} \and  Jihao Long\thanks{Institute for Advanced Algorithms Research, Shanghai, China, \em{longjh1998@gmail.com}.} \and Haosheng Zhou\thanks{Department of Statistics and Applied Probability, University of California, Santa Barbara, CA 93106-3110, \em{hzhou593@ucsb.edu}.}}
\date{\today}
\begin{document}

\maketitle

\begin{abstract}
In this paper, we study finite-agent linear-quadratic games on graphs. Specifically, we propose a comprehensive framework that extends the existing literature by incorporating heterogeneous and interpretable player interactions. Compared to previous works, our model offers a more realistic depiction of strategic decision-making processes. For general graphs, we establish the convergence of fictitious play, a widely-used iterative solution method for determining the Nash equilibrium of our proposed game model. Notably, under appropriate conditions, this convergence holds true irrespective of the number of players involved. For vertex-transitive graphs, we develop a semi-explicit characterization of the Nash equilibrium. Through rigorous analysis, we demonstrate the well-posedness of this characterization under certain conditions. We present numerical experiments that validate our theoretical results and provide insights into the intricate relationship between various game dynamics and the underlying graph structure.
\end{abstract}

\textbf{Key words:}
\textit{Stochastic differential game on graphs, Nash equilibrium, Convergence, Fictitious play, Vertex-transitive graph}

\maketitle


\section{Introduction}

Game theory, established in the 1940s \cite{von2007theory}, has become a premier method for modeling human interactions. Similarly, graph theory, which addresses pairwise relationships between objects, has gained significant interest since Euler's solution to the ``seven bridges of Königsberg'' problem. Graphs, representing various types of information, have been extensively used in modeling biological, informational, and social systems \cite{mason2007graph,riaz2011applications,majeed2020graph}. Thus, exploring the interdisciplinary synthesis of game theory and graph theory is crucial. Existing literature shows the efficacy of combining games and graphs in modeling significant phenomena, such as network security \cite{mavronicolas2005graph}, vehicle-to-vehicle communication \cite{hassija2020dagiov}, and advertising dynamics \cite{hafezalkotob2018cooperation}. We focus on stochastic differential games with graph structures, governed by systems of stochastic differential equations \cite{friedman1972stochastic,carmona2016lectures}, which model continuous-time games with continuous state and action spaces. Incorporating graph structures captures direct and indirect player interactions, introducing heterogeneity. These games have garnered substantial attention for understanding behaviors within economic and financial frameworks, including interbank lending \cite{capponi2020dynamic}, systemic risk \cite{carmona2013mean}, default contagion \cite{feinstein2019dynamic}, and credit networks \cite{nadtochiy2020mean}.

In the literature, there are two major types of stochastic differential games: cooperative games, where players work towards a collective goal, and non-cooperative games, where players aim to maximize their own advantages. In the context of non-cooperative game theory, which we focus on in this paper, one key solution concept is the Nash equilibrium (NE) \cite{nash1950equilibrium,nash1950non}, where no player can benefit by unilaterally changing their own strategy. Due to its significance, the analytic and numerical solutions of the NE have been core topics in non-cooperative game theory.

Regarding the analytic solution for the NE, linear-quadratic games represent the majority of game models that have a closed-form solution for the NE. The linear-quadratic flocking model \cite{carmona2018probabilistic}, the systemic risk model \cite{carmona2013mean}, and other models presented in \cite{bensoussan2016linear,lacker2022case,gao2023lqg} are notable examples. Despite their simple structures, linear-quadratic games are powerful enough to explain real-world phenomena and have been adopted in various fields \cite{yong2002leader,bensoussan2016linear,zhang2020stochastic,druagan2023game}. For a comprehensive discussion on linear-quadratic games, we refer readers to \cite{sun2020stochastic}. For a family of important examples of linear-quadratic games with mean-field interactions, see \cite{carmona2018probabilistic}.

For a linear-quadratic game, numerically solving for the NE can always be transformed into numerically solving the associated coupled Riccati equations. This task is relatively straightforward due to the existence of numerous efficient numerical schemes for solving ODEs. However, solving for the NE of a general stochastic differential game is challenging, especially when the game is fully nonlinear (with controlled diffusion terms), involves a relatively large number of players, and/or has high-dimensional states. This complexity gives rise to fictitious play (FP) as an efficient method for numerically solving for the NE of a non-cooperative game in an iterative way, initially introduced by Brown \cite{brown1949some,brown1951iterative}. The main idea behind FP is to recast the problem into a sequence of stochastic control problems faced by each player to be solved repeatedly. However, the convergence of FP to the NE is not guaranteed, even for tabular games \cite{shapley1963some}, which motivates research on the convergence of FP. In the context of continuous-time differential games, affirmative results include \cite{cardaliaguet2017learning,briani2018stable} for mean-field games and \cite{hu2019deep,han2022convergence} for finite-player games.

A closely related research direction is graphon mean-field games (graphon MFGs), which extend classical MFGs by incorporating heterogeneous interactions through graphons---limiting objects representing the structure of dense networks \cite{bayraktar2023propagation,caines2019graphon, caines2021graphon, gao2023lqg}. Classical MFGs \cite{lasry2007mean} approximate finite-agent games in the limit of an infinite number of homogeneous players and have been widely used to model human behavior in various fields, including Aiyagari’s growth model in economics \cite{carmona2018probabilistic}, systemic risk models in finance \cite{carmona2013mean}, and cryptocurrency mining models in blockchain technology \cite{li2024mean}. While classical MFGs offer potential analytical traceability and dimension reduction in large-population settings, their validity depends on the assumption that all players are identical in distribution, which may not hold in many practical scenarios. Graphon MFGs address this limitation by modeling structured interactions, but they rely on the assumption that the underlying network is dense. However, many real-world networks, including social networks, are often sparse, where graphon-based approximations may be less effective.
In contrast, our setting directly models finite-agent games, accommodating both sparse and dense interactions without requiring a large game size. For further research on learning graphon mean-field games, we refer readers to \cite{cui2021learning,fabian2023learning}.

In this paper, we propose a new finite-agent linear-quadratic game on graphs, aiming to provide a more realistic modeling of players with heterogeneous and interpretable interactions. 
More precisely, we consider an $N$-player game, where the dynamics of player $i$ are given by
\begin{equation}
    \ud X^i_t = \left[a\left(\bar X_t^i - X^i_t\right) + \alpha^i_t\right]\,\ud t + \sigma\,\ud W^i_t, \quad \forall i\in[N],
\end{equation} 
where $\alpha^i$ is player $i$'s strategy and $\bar X_t^i := \frac{1}{\sqrt{d_{v_i}}}\sum_{j:v_j\sim v_i}\frac{1}{\sqrt{d_{v_j}}}X^j_t$ denotes the local average.
Here $d_{v_i}$ represents the degree of node $i$ in the graph, characterizing the connectivity among $N$ players. The dynamics exhibit a mean-reverting nature, where the reversion level is determined by the state average of neighboring players. One key motivation for this model is peer influence in social networks, where $X_t^i$ represents player $i$'s opinion and the dynamics reflect opinion conformity. 

This framework extends existing research, including \cite{carmona2013mean}, which primarily focuses on games on equally-weighted complete graphs (where $d_{v_i} \equiv N - 1$, $\forall i$), and \cite{lacker2022case}, which assumes $a = 0$ thus does not include mean reversion.  By incorporating local interactions through the graph structure, our model captures heterogeneous influences and provides a more general setting for strategic decision-making. For further details of the model, including motivation and interpretation, see Section~\ref{sec:model_setup}, particularly Remark~\ref{rem:model}. 

\medskip
\noindent{\textbf{Main Contribution.}}
In Section~\ref{sec:fp}, we analyze games on arbitrary graphs. Using dynamic programming and a properly chosen ansatz, we translate the game into a system of $N$ coupled matrix-valued Riccati equations, whose solvability has not yet been fully characterized in the literature. We establish local existence and uniqueness of the solution under a smallness condition, regardless of the number of players $N$. This is a key improvement over previous results such as \cite{papavassilopoulos1979existence}, where the existence condition depends explicitly on $N$. 
Our approach leverages the convergence of fictitious play to solve for the Markovian NE, which extends the previous analysis \cite{hu2019deep} that only addresses the case of open-loop NE for games with homogeneous players on complete graphs.
In addition, our work provides theoretical backing for the feasibility of FP.

In Section~\ref{sec:transitive}, we focus on games on vertex-transitive graphs. The added symmetry among players allows us to refine the solution obtained from the Riccati system and construct a semi-explicit characterization of the NE. This characterization is intricately linked to solving a matrix-valued ODE, whose existence and uniqueness are far from trivial. Our key results include local well-posedness (Theorem~\ref{thm:local_exist_unique}), established via the Picard-Lindel\"of theorem, and global existence and uniqueness (Theorem~\ref{thm:global_exist_unique}), obtained through a concavity argument that provides an \textit{a priori} upper bound.

For comparison, in \cite{carmona2013mean}, the Riccati equations are scalar-valued and decoupled, allowing for explicit solutions. In \cite{lacker2022case}, where \(a= 0\), the ODE degenerates to a scalar equation, whose well-posedness follows from classical theory.

\medskip
\noindent \textbf{Organization of the paper.} In the rest of the paper, we present multiple results from different perspectives, uncovering the connection between graph structures and the stochastic differential games on them. 
In Section~\ref{sec:model}, we introduce the game on graphs to be investigated, derive the Hamilton-Jacobi-Bellman (HJB) system via the dynamic programming principle, and reduce it into coupled Riccati equations as a preparation for finding the Markovian Nash equilibrium. 
Section~\ref{sec:fp} analyzes the iterative scheme -- fictitious play -- for solving the Ricatti equations and proves its convergence. 
We restrict ourselves to vertex-transitive graphs in Section~\ref{sec:transitive}. This class of graphs exhibits symmetry, from which we derive a semi-explicit expression of the Markovian Nash equilibrium, along with the development of the global existence and uniqueness result.
Section~\ref{sec:numerics} presents numerical experiments solving linear-quadratic games on different graphs, providing verification for the theorems proved in the previous sections and crucial insights into the connection between various aspects of the game and the graph structure. The paper concludes in Section~\ref{sec:conclusion_future}.


\section{A Linear-Quadratic Game on Graphs}\label{sec:model}

In this section, we consider an \(N\)-player linear-quadratic game associated with a finite graph \(G = (V,E)\) that is connected, simple, and has undirected edges. 
When associated with games, each vertex in the graph $G$ can be interpreted as a player, while each edge in the graph is identified as the interaction between two players. 

We start by fixing some notations and definitions. Denote \([N]\) as the collection of integers \(\SET{1,2,\ldots,N}\) and \(\mathbb{S}^{N\times N}\) as the collection of symmetric \(N\times N\) matrices. 
The graph-related concepts are defined as follows. 

\begin{defn}
Let \(G = (V,E)\) be a finite connected graph with simple undirected edges, where \(V = \SET{v_1,v_2,\ldots,v_N}\) denotes the collection of vertices, and each edge \(e\in E\) is an unordered pair \(e = (u,v)\) for some \(u,v\in V\).
Two vertices \(u,v\in V\) are said to be adjacent \(u\sim v\) if and only if \((u,v)\in E\). The degree of the vertex \(v\in V\), denoted by \(d_{v}\), is the number of edges that are connected to $v$.  
    The neighborhood of a vertex \(v\in V\) is defined as the collection of vertices that are one-step away from \(v\), i.e., \(N_G(v) = \SET{u\in V:u\sim v}\).
    \label{defn:graph}
\end{defn}

In this paper, the connection between a stochastic differential game and a graph is established through the graph Laplacian \(L\),
which, validated by the spectral graph theory \cite{chung1997spectral}, reveals crucial graph properties, e.g., graph connectivity, graph isoperimetric property, etc.

\begin{defn}[Normalized graph Laplacian]
    For a connected simple undirected graph \(G\) with \(N\) vertices, the (normalized) graph Laplacian \(L\in\mathbb{S}^{N\times N}\) is defined as
    \begin{equation}
    L_{ij} = 
    \begin{cases}
        1 & \text{if}\ i = j\\
        -\frac{1}{\sqrt{d_{v_i}d_{v_j}}} & \text{if}\ i\neq j \ \text{and}\ v_i\sim v_j\\
        0 & \text{else}
    \end{cases},\ \forall i,j\in[N].
\end{equation}
\label{defn:graph_lap}
\end{defn}

\subsection{The game setup}\label{sec:model_setup}

Consider a filtered probability space \((\Omega,\F,\{\F_t\}_{t\geq 0},\PP)\), supporting independent Brownian motions \(\{W^1_t\},\ldots,\{W^N_t\}\) and \(\F_t = \sigma(W^i_t,\forall i\in[N])\). The state process of player \(i\), denoted by \(\{X^i_t\}\), is controlled through her strategy process \(\{\alpha_t^i\}\) and subject to idiosyncratic noises modeled by \(\{W^i_t\}\): 
\begin{equation}
    \ud X^i_t = \left[a\left(\frac{1}{\sqrt{d_{v_i}}}\sum_{j:v_j\sim v_i}\frac{1}{\sqrt{d_{v_j}}}X^j_t - X^i_t\right) + \alpha^i_t\right]\,\ud t + \sigma\,\ud W^i_t, \quad \forall i\in[N].
    \label{eqn:state_dynamics}
\end{equation} 
Here, the parameter \(a\geq 0\) models the speed of mean reversion, and the parameter \(\sigma>0\) indicates the volatility of the state dynamics.  For further interpretation and motivation of the model, see Remark~\ref{rem:model}. Without loss of generality, the state and strategy processes are both assumed to take values in \(\R\).

The generic model of our concern operates on the finite time horizon \([0,T]\). Player $i$ chooses her strategy $\{\alpha_t^i\}_{t \in [0, T]}$ within the admissible set \(\A\): 
\begin{equation}\label{def:A}
   \A := \left\{\alpha:\alpha\ \text{is progressively measurable w.r.t.}\ \{\F_t\},\ \E \int_0^T|\alpha_t|^2\ud t<\infty\right\}, 
\end{equation}
to minimize her expected cost of the form:
\begin{equation}\label{def:J}
    J^i(\alpha) := \mathbb{E}\left[\int_0^T f^i(t,  X_t, \alpha_t^i) \ud t + g^i(X_T)\right].
\end{equation}
The notations $X_t := [X_t^1, \ldots, X_t^N]\transpose$ and $\alpha_t := (\alpha_t^1, \ldots, \alpha_t^N)$ represent the state processes and the strategies of all players.
Under the notation of \(x = [x^1, \ldots, x^N]\transpose \in \RR^N \) as the state variable and \(\alpha  \in \RR\) as the control variable, the running and terminal costs of player \(i\in[N]\) are set as
\begin{align}   \label{eqn:running_cost}
f^i(t,x,\alpha) &= \frac{1}{2}(\alpha)^2 - q\alpha \Big[ \frac{1}{\sqrt{d_{v_i}}}\sum_{j:v_j\sim v_i}\frac{1}{\sqrt{d_{v_j}}}x^j - x^i\Big] \notag\\
&+ \frac{\EPS}{2}\Big[ \frac{1}{\sqrt{d_{v_i}}}\sum_{j:v_j\sim v_i}\frac{1}{\sqrt{d_{v_j}}}x^j - x^i\Big]^2,\\
    g^i(x) &= \frac{c}{2}\Big[ \frac{1}{\sqrt{d_{v_i}}}\sum_{j:v_j\sim v_i}\frac{1}{\sqrt{d_{v_j}}}x^j - x^i\Big]^2.
    \label{eqn:terminal_cost}
\end{align}
The parameters are in the range \(q\geq 0\), \(\EPS\geq 0\), \(c>0\), and the condition \(q^2\leq\EPS\) is required to ensure the non-negativity of \(f^i\) and the joint convexity of \(f^i\) in $(x, \alpha)$, which guarantees the well-posedness of the problem.

The model considered encompasses several models from the literature as special cases. For instance, when \(G = K_N\), a complete graph with \(N\) vertices, our model aligns with the one in \cite{carmona2013mean}; and when $a = q= \EPS =  0$ and $G$ is a vertex-transitive graph,  our model aligns with the one in \cite{lacker2022case}. 

\medskip

\begin{rem}[Model interpretation]\label{rem:model}
Using Definition~\ref{defn:graph_lap}, the mean reversion level in~\eqref{eqn:state_dynamics} can be rewritten as
\begin{equation}
    \frac{1}{\sqrt{d_{v_i}}}\sum_{j:v_j\sim v_i}\frac{1}{\sqrt{d_{v_j}}}X^j_t  - X^i_t = -e_i^TLX_t,
    \label{eqn:mean_rev_graph_lap}
\end{equation}
where \(\{e_1,\ldots,e_N\}\) is the standard basis of \(\R^N\).
This concise representation facilitates the connection between the graph structure and various aspects of stochastic differential games on graphs in later discussions.

One motivation for studying the dynamics  \eqref{eqn:state_dynamics} comes from modeling peer influence in a given social network \(G\). Here, \(X^i_t\) represents the opinion of player \(i\) on a certain topic, and the interactions occur only between neighboring nodes (friends). Under opinion conformity (where an individual’s opinion tends to align with those of others), the opinion dynamics naturally follow a mean-reversion process within each graph neighborhood. Players can actively adjust their opinions via taking actions \(\alpha^i_t\), incurring costs for deviations and exertion of effort. 
The cost functionals \eqref{eqn:running_cost}--\eqref{eqn:terminal_cost} encode the trade-off: each player seeks to conform to the average opinion of their immediate neighbors while minimizing adjustment costs.

We note that different definitions of the graph Laplacian exist in the graph theory literature.
The unnormalized graph Laplacian \(L^{\text{UN}}\) is a symmetric matrix with real eigenvalues \cite{godsil2001algebraic}.
However, its eigenvalues admit possible dependence on \(N\), and its spectral properties are dominated by high-degree graph vertices.
Among normalized graph Laplacian, one commonly used variant is the random walk normalized graph Laplacian \(L^{\text{RW}}\) defined in~\eqref{eqn:L_RW}, which is motivated by the transition probability of random walks on graphs:
\begin{equation}
    \label{eqn:L_RW}
    L^{\text{RW}}_{ij}:= 
    \begin{cases}
        1 & \text{if}\ i = j\\
        -\frac{1}{d_{v_i}} & \text{if}\ i\neq j \ \text{and}\ v_i\sim v_j\\
        0 & \text{else}
    \end{cases},\ \forall i,j\in[N].
\end{equation}
The mean reversion level associated with \(L^{\text{RW}}\) corresponds to the arithmetic average within the graph neighborhood:
\begin{equation}
    \frac{1}{d_{v_i}}\sum_{j:v_j\sim v_i}X^j_t  - X^i_t = -e_i^TL^{\text{RW}}X_t.
\end{equation}
Nevertheless, the asymmetry of \(L^{\text{RW}}\) on general graphs introduces technical challenges and complicates theoretical analysis.
To overcome these difficulties, we base our discussion on the symmetric normalized graph Laplacian \(L\), as defined in Definition~\ref{defn:graph_lap}.
A straightforward calculation shows that \(L^{\text{RW}}\) and \(L\) are similar matrices, related by \(L^{\text{RW}} = D^{-\frac{1}{2}}LD^{\frac{1}{2}}\), where \(D:= \mathrm{diag}(d_{v_1},\ldots,d_{v_N})\) is the diagonal degree matrix.
Thus, \(L\) can be understood as a symmetric version of \(L^{\text{RW}}\) that maintains all its spectral information.

While the above discussion motivates our Laplacian-based mean reversion formulation in equation~\eqref{eqn:mean_rev_graph_lap},
it is worth noting that the results in Section~\ref{sec:fp} extend to a broader class of averaging schemes beyond Laplacian-based ones, i.e., for different choices of symmetric matrices $L$, as long as \(L\) has a spectrum bounded by a constant. However, in Section~\ref{sec:transitive}, where we focus on games over vertex-transitive graphs, a high degree of symmetry is required, making the normalized Laplacian-based formulation particularly natural. Therefore, we choose to adopt this specific formulation~\eqref{eqn:mean_rev_graph_lap} throughout the following context.

Note that if \(G\) is a regular graph, meaning all vertices have the same degree, the mean reversion level in~\eqref{eqn:mean_rev_graph_lap} becomes the arithmetic average of the states of all players in the neighborhood of \(v_i\).

\end{rem}

\medskip

\begin{rem}

Although \(G\) is required to be a connected graph in Definition~\ref{defn:graph}, this model also works for disconnected graphs since one can always put up the same model on each connected component of a disconnected graph.
\label{rem:disconn}
\end{rem}

\medskip

In a stochastic differential game, the information set based on which each player makes its decision must be specified. The difference in the information set results in different notions of strategy, e.g., open-loop strategy, closed-loop strategy, Markovian strategy, etc. In this paper, our discussion is restricted to the Markovian strategy, i.e., \(\alpha^i_t = \phi^i(t,X_t)\) for some deterministic feedback function \(\phi^i:[0,T]\times \R^N\to\R\) of player \(i\). 
To match the restrictions in the definition of \(\A\) (cf. \eqref{def:A}), the feedback function \(\phi^i\) shall be Borel measurable such that \(\sup_{(t,x)\in[0,T]\times \R^N}\frac{|\phi^i(t,x)|}{1+\NORM{x}}<\infty\). In the setting of a competitive game, the Nash equilibrium defined below is of general interest.

\begin{defn}[Nash equilibrium]
    The collection of all players' strategies \(\hat{\alpha} := (\hat\alpha^1, \ldots, \hat\alpha^N) \) is defined as an Nash equilibrium (NE) if 
    \begin{align}
        J^i((\alpha,\hat{\alpha}^{-i}))\geq J^i(\hat{\alpha}),\ \forall i\in[N],\ \forall \alpha \in \A.
    \end{align}
    The notation \((\alpha,\hat{\alpha}^{-i}) := (\hat{\alpha}^1, \ldots, \hat{\alpha}^{i-1}, \alpha,\hat{\alpha}^{i+1}, \ldots ,\hat{\alpha}^N)\)
    is the collection of strategies replacing player \(i\)'s strategy with \(\alpha\) while maintaining all other players' strategies.
    \label{defn:NE}
\end{defn}

In simple words, a Nash equilibrium is a collection of strategies from which any player does not have the motivation to deviate, given that all other players stick to their Nash equilibrium strategies.

\subsection{The associated HJB system}

A standard approach to solving the Markovian NE involves constructing a value function and the Hamilton-Jacobi-Bellman (HJB) system.
Denote by \(v^i(t, x):[0,T]\times \R^N\to \R\) the value function of player \(i\),  where \(t\in [0,T]\) is the time variable and \(x \in\R^N\) is the state variable.
By dynamic programming principle, the HJB system is given by
\begin{multline}
    \partial_t v^i + \inf_{\alpha^i}\Big\{\sum_{j=1}^N (\alpha^j - ae_j\transpose Lx)\partial_{x^j}v^i + \frac{\sigma^2}{2} \sum_{j=1}^N \partial_{x^jx^j}v^i \\
    + \frac{1}{2}(\alpha^i)^2 + q\alpha^ie_i\transpose Lx + \frac{\EPS}{2}(e_i\transpose Lx)^2 \Big\} = 0,
    \label{eqn:HJB_system}
\end{multline}
with a terminal condition \(v^i(T,x) = \frac{c}{2}(e_i\transpose Lx)^2\). Solving for the infimum in the above equation gives
\begin{equation}
    \hat{\alpha}^i(t,x) = -qe_i\transpose Lx - \partial_{x^i}v^i(t,x).
    \label{eqn:NE_strategy_with_value_function}
\end{equation}
Using a quadratic (in $x$) ansatz, 
\begin{equation}
    v^i(t,x) = \frac{1}{2}x\transpose F^i_tx + h^i_t,
    \label{eqn:ansatz}
\end{equation}
where \(F^i:[0,T]\to \mathbb{S}^{N\times N}\) and \(h^i:[0,T]\to\R\) are both deterministic and measurable functions in time.
Plugging equations~\eqref{eqn:NE_strategy_with_value_function}--\eqref{eqn:ansatz} back into equation~\eqref{eqn:HJB_system}, and collecting the coefficients of \(x\) yield a Riccati equation for \(F^i\):
\begin{multline}
    \dot{F}^i_t + \sum_{j=1}^N [-(a+q)L - F^j_t]e_je_j\transpose F^i_t + \sum_{j=1}^N F^i_te_je_j\transpose [-(a+q)L - F^j_t] \\
    +(\EPS-q^2)Le_ie_i\transpose L + F^i_te_ie_i\transpose F^i_t = 0,\quad F^i_T = cLe_ie_i\transpose L.
    \label{eqn:Riccati}
\end{multline}
By collecting terms of order 1, one can deduce an ODE for $h^i$:
\begin{equation}
    \dot{h}^i_t + \half \sigma^2 \Tr(F^i_t) = 0, \quad h^i_T = 0,
    \label{eqn:Riccati_for_h}
\end{equation}
which is in first order involving $F^i$. Therefore, \(h^i\) can be easily represented by \(F^i\), while the main difficulty lies in solving for \(F^i\). 
If a solution \((F^1_t,\ldots,F^N_t)\) to the coupled Riccati system~\eqref{eqn:Riccati} exists, a corresponding Markovian NE \(\hat{\alpha}\) exists, which is given by 
\begin{equation}
    \hat{\alpha}^i(t,x) = -qe_i\transpose Lx - e_i\transpose F^i_tx.
    \label{eqn:NE_strategy}
\end{equation}
This equilibrium strategy depends on $F^i$ but not on $h^i$.
Consequently, the rest of the paper will focus on analyzing the Riccati system~\eqref{eqn:Riccati}.


\section{The Equilibrium via Fictitious Play}\label{sec:fp}

Initially introduced by Brown \cite{brown1949some,brown1951iterative}, the key concept of FP involves transforming the calculation of the NE into a sequence of stochastic control problems faced by each player.
Specifically, player $i$ optimizes her objective while assuming all other players' strategies remain fixed and follow their historical strategies. 
Solving this stochastic control problem yields a new optimal strategy for player \(i\), completing one round of fictitious play for player \(i\).
Each round of fictitious play consists of performing this procedure for each player once. The ultimate goal is to observe the convergence of all players' strategies converging to the NE after a sufficient number of rounds of fictitious play have been carried out.

The convergence of FP to the NE, however, is not guaranteed, even for tabular games \cite{shapley1963some}, which motivates research on the convergence of FP. In the context of continuous-time differential games, affirmative answers include  \cite{cardaliaguet2017learning,briani2018stable} for mean-field games and \cite{hu2019deep,han2022convergence} for finite-player games, but none of them take into account the graph structure. 
In this section, we provide proof for the convergence of FP for the generic model described in Section~\ref{sec:model_setup}.
This result contributes in three significant ways. Firstly, it provides a method for constructing the NE under conditions that are independent of the game size $N$. 
Secondly, it validates the effectiveness of FP for solving stochastic differential games on graphs. 
Thirdly, it elucidates the connection between the graph structure and the convergence of FP.

\subsection{The mathematical formulation}

To prepare for stating the convergence result in later contexts, we describe the procedure of FP in mathematical terms.
Firstly, rewrite the state dynamics and the cost functionals in terms of the graph Laplacian \(L\). 
Using equation~\eqref{eqn:mean_rev_graph_lap}, the state dynamics~\eqref{eqn:state_dynamics} can be rewritten as
\begin{equation}
    \ud X^i_t = \left(\alpha^i_t - ae_i\transpose LX_t\right)\,\ud t + \sigma\,\ud W^i_t,
    \label{eqn:state_dynamics_L}
\end{equation}
and the cost functionals~\eqref{eqn:running_cost}--\eqref{eqn:terminal_cost} are rewritten as
\begin{equation}
    f^i(t,x,\alpha) = \frac{1}{2}(\alpha)^2 + q\alpha e_i\transpose Lx + \frac{\EPS}{2}\left( e_i\transpose Lx\right)^2,
    \quad g^i(x) = \frac{c}{2}\left( e_i\transpose Lx\right)^2.
    \label{eqn:cost_L}
\end{equation}
To provide a clear description of how FP works, we summarize the notations used as follows. 

\begin{nota}
    For any object \(A\), \(A^{i,k}\) refers to the object associated with player \(i\) at FP stage \(k\in\N\).
    Denote by \(A^{k}\) the collection of objects across all the players at FP stage \(k\). 
    In the following context, \(i\) and \(j\) stand for the player indices, while \(k\) represents the FP stage index.
    \label{nota: FP}
\end{nota}

Without loss of generality, let us consider the procedure of FP for player \(i\).
Assume all players have their strategies at stage \(k\) determined through feedback functions \(\phi^{j,k}\) that \(\alpha^{j,k}_t = \phi^{j,k}(t,X_t^{k})\), $\forall j \in [N]$. 
At stage $k+1$, player \(i\) optimizes her strategy to get a new feedback function \(\phi^{i,k+1}\) based on the environment at stage \(k\) while assuming all the other players stick to their feedback functions at stage \(k\), i.e., player \(j\neq i\) is using feedback function \(\phi^{j,k}\).
As a result, the stochastic control problem faced by player \(i\) has state dynamics as
\begin{equation}
    \begin{cases}
    \ud X^{i,k+1}_t = (\alpha^i_t - ae_i\transpose LX^{k+1}_t)\,\ud t + \sigma\,\ud W^i_t,\\
    \ud X^{j,k+1}_t = [\phi^{j,k}(t,X^{k+1}_t) - ae_i\transpose LX^{k+1}_t]\,\ud t + \sigma\,\ud W^j_t,\ \forall j\neq i.
    \end{cases}.
    \label{eqn:FP_state_dynamics_tmp}
\end{equation}
Player \(i\) faces a running cost of \(f^i(t, X^{k+1}_t,\alpha^i_t)\) and terminal cost \(g^i(X_T^{k+1})\) where \(f^i\) and \(g^i\) are provided in equation~\eqref{eqn:cost_L}.

\begin{rem}[Variants of FP algorithms]
    
The original fictitious play (FP) \cite{brown1949some, brown1951iterative} updates strategies based on the best response to the empirical distribution of opponents' past actions. It has been shown to be an effective method for converging to a NE in normal-form games \cite{robinson1951an,hahn1999the,berger2005fictitious}.

In mean-field games (MFGs), FP has been adapted to update strategies based on the average density/measure of past population distributions \cite{briani2018stable, cardaliaguet2017learning, min2021signatured}. An alternative approach, which we adopt here, updates strategies using only the most recent interaction. This method has been employed in both MFGs \cite{han2024learning} and finite-player games \cite{hu2019deep, han2020deep, han2022convergence, xuan2022pandemic}. The primary motivation behind this choice is computational and due to memory efficiency---averaging strategies is often infeasible when strategies are parameterized by neural networks, since computing empirical averages introduces additional computational costs.

All the FP variants mentioned above follow simultaneous FP, meaning that the $(k+1)^{th}$ strategies for all players can be computed at the same time. In contrast, alternating FP updates strategies sequentially in a specific order. A thorough discussion of these variants can be found in \cite[Sections~5.2--5.4]{hu2019deep}.

\end{rem}

\medskip

Let \(v^{i,k+1}:[0,T]\times \R^N\to \R\) be the value function of player \(i\) at stage \(k+1\). It satisfies 
\begin{multline}
    \partial_t v^{i,k+1} + \inf_\alpha\Big\{(\alpha - ae_i\transpose Lx)\partial_{x^i}v^{i,k+1} + \sum_{j\neq i}(\phi^{j,k}(t,x) - ae_j\transpose Lx)\partial_{x^j}v^{i,k+1} \\
    + \frac{\sigma^2}{2}\sum_{j=1}^N\partial_{x^jx^j}v^{i,k+1}
    + \frac{1}{2}\alpha^2 + q\alpha e_i\transpose Lx + \frac{\EPS}{2}(e_i\transpose Lx)^2\Big\} = 0, 
    \label{eqn:HJB_FP}
\end{multline}
with terminal condition $ v^{i,k+1}(T,x) = \frac{c}{2}(e_i\transpose Lx)^2$.
The first-order condition in  the above equation gives
\begin{equation}
    \phi^{i,k+1}(t,x) = -\partial_{x^i}v^{i,k+1}(t,x) - qe_i\transpose Lx.
    \label{eqn:opt_ctrl_FP}
\end{equation}
Plugging the above feedback function back into equation~\eqref{eqn:HJB_FP} yields
\begin{multline}
    \partial_t v^{i,k+1} - (a+q)e_i\transpose Lx\;\partial_{x^i}v^{i,k+1}
    + \frac{\sigma^2}{2}\sum_{j=1}^N \partial_{x^jx^j}v^{i,k+1} - \frac{1}{2}(\partial_{x^i}v^{i,k+1})^2 \\
    +\frac{\EPS - q^2}{2}(e_i\transpose Lx)^2
    + \sum_{j\neq i} \left[\phi^{j,k}(t,x) - ae_j\transpose Lx\right]\partial_{x^j}v^{i,k+1} = 0.
    \label{eqn:HJB_FP_simplified}
\end{multline}
The term \(\phi^{j,k}(t,x)\) in the above equation hinders us from reducing the HJB equation to a Riccati equation directly.
To proceed, we make the following assumption on the initial strategies $\phi^{j,0}$, for all players $j \in [N]$.

\begin{assu}
    Assume that for every player $j$, \(j\in[N]\), FP starts with \(\phi^{j,0}\) that is of the form \(\phi^{j,0}(t,x) = (\varphi^{j,0}_t)\transpose x\), for some \(\varphi^{j,0}:[0,T]\to\R^N\).
    \label{assu:feedback}
\end{assu}

\begin{lem}\label{lem:FP}
    Under Assumption~\ref{assu:feedback}, the optimal strategy $\phi^{i,k}$ for player $i$ at FP stage $k$, can always be represented as \(\phi^{i,k}(t,x) = (\varphi^{i,k}_t)\transpose x\) for some \(\varphi^{i,k}:[0,T]\to\R^N\),
    for any \( k\in\N\).
\end{lem}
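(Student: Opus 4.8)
The plan is to argue by induction on the fictitious-play stage index $k$, exploiting the fact that once all opponents' strategies are frozen to linear feedbacks, each stage of FP becomes a genuine (single-player) linear-quadratic stochastic control problem. The base case $k=0$ is exactly Assumption~\ref{assu:feedback}. For the inductive step, I would assume that at stage $k$ every player $j\in[N]$ uses a linear feedback $\phi^{j,k}(t,x)=(\varphi^{j,k}_t)\transpose x$. Substituting these into the drift in~\eqref{eqn:FP_state_dynamics_tmp} makes the state dynamics faced by player $i$ linear and homogeneous in $x$, while the running and terminal costs in~\eqref{eqn:cost_L} are purely quadratic in $x$. Hence the stage-$(k+1)$ problem for player $i$ is a standard LQ control problem, and it is natural to seek a value function through the ansatz $v^{i,k+1}(t,x)=\frac12 x\transpose P^{i,k+1}_t x + (r^{i,k+1}_t)\transpose x + s^{i,k+1}_t$, with $P^{i,k+1}:[0,T]\to\mathbb{S}^{N\times N}$, $r^{i,k+1}:[0,T]\to\R^N$, and $s^{i,k+1}:[0,T]\to\R$.

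Next I would substitute this ansatz into the simplified HJB equation~\eqref{eqn:HJB_FP_simplified} and match powers of $x$. The quadratic terms yield a Riccati equation for $P^{i,k+1}$; the degree-one terms yield an ODE for $r^{i,k+1}$; and the remaining terms give an ODE for $s^{i,k+1}$, which, as with $h^i$ in~\eqref{eqn:Riccati_for_h}, enters $v^{i,k+1}$ but never the feedback. Global solvability of the Riccati equation for $P^{i,k+1}$ on the whole interval $[0,T]$ follows from standard LQ theory, using the joint convexity of the cost guaranteed by the condition $q^2\le\EPS$.

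The crux of the argument is to show that the affine part of the value function vanishes, i.e., $r^{i,k+1}_t\equiv 0$. I would establish this by inspecting the degree-one terms collected from~\eqref{eqn:HJB_FP_simplified}: because the terminal condition $\frac{c}{2}(e_i\transpose Lx)^2$ and the running cost carry no linear-in-$x$ contribution, and because each frozen drift $\phi^{j,k}(t,x)-ae_j\transpose Lx$ is homogeneous of degree one in $x$, every degree-one term in the equation turns out to be proportional to $r^{i,k+1}$ (or to $\dot r^{i,k+1}$). Consequently $r^{i,k+1}$ solves a linear ODE of the form $\dot r^{i,k+1}_t = M_t\, r^{i,k+1}_t$ with terminal condition $r^{i,k+1}_T=0$, where $M_t$ depends only on $P^{i,k+1}_t$, $L$, and the $\varphi^{j,k}_t$. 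By uniqueness for linear ODEs, $r^{i,k+1}_t\equiv 0$.

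Finally, from the first-order condition~\eqref{eqn:opt_ctrl_FP} together with the symmetry of $P^{i,k+1}_t$ and $L$, I obtain $\phi^{i,k+1}(t,x) = -\partial_{x^i}v^{i,k+1}(t,x)-qe_i\transpose Lx = -e_i\transpose P^{i,k+1}_t x - qe_i\transpose Lx = (\varphi^{i,k+1}_t)\transpose x$ with $\varphi^{i,k+1}_t = -\bigl(P^{i,k+1}_t e_i + qLe_i\bigr)$, which is linear in $x$ and closes the induction. I expect the main obstacle to be the rigorous justification of $r^{i,k+1}\equiv 0$, namely the bookkeeping that confirms no inhomogeneous forcing appears in the $r$-equation, alongside the verification of global solvability of the Riccati equation for $P^{i,k+1}$ on $[0,T]$.
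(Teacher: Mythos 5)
Your proposal is correct and follows essentially the same route as the paper: induction on $k$, a quadratic ansatz for the value function, substitution into the simplified HJB equation~\eqref{eqn:HJB_FP_simplified}, and the first-order condition~\eqref{eqn:opt_ctrl_FP} to read off the linear feedback $\varphi^{i,k+1}_t = -(F^{i,k+1}_t+qL)e_i$. The only difference is that you carry an affine term $r^{i,k+1}$ in the ansatz and prove it vanishes via a linear ODE with zero terminal data, whereas the paper posits a purely quadratic-plus-constant ansatz $v^{i,k+1}(t,x)=\frac12 x\transpose F^{i,k+1}_t x + h^{i,k+1}_t$ from the outset; your version is slightly more self-contained on that point but is not a genuinely different argument.
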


\begin{proof}
    We present a proof by induction on $k$. Let us assume that the statement holds at stage $k$, and consider a quadratic (in $x$) ansatz for the value function $v^{i, k+1}(t,x)= \frac{1}{2}x\transpose F^{i,k+1}_t x + h^{i,k+1}_t$,
where \(F^{i,k+1}:[0,T]\to \mathbb{S}^{N\times N}\) and \(h^{i,k+1}:[0,T]\to \R\) are both deterministic functions in time.
By plugging it into equation~\eqref{eqn:HJB_FP_simplified} and using the induction hypothesis, one can collect terms of $x$ and obtain a Riccati equation for $F^{i, k+1}$:
\begin{multline}
    \dot{F}^{i,k+1}_t - F^{i,k+1}_te_ie_i\transpose F^{i,k+1}_t + (\EPS-q^2)Le_ie_i\transpose L - aLF^{i,k+1}_t - aF^{i,k+1}_tL \\
    + \sum_{j\neq i}F^{i,k+1}_t e_j(\varphi^{j,k}_t)\transpose
    + \sum_{j\neq i} \varphi^{j,k}_te_j\transpose F^{i,k+1}_t - qLe_ie_i\transpose F^{i,k+1}_t - qF^{i,k+1}_t e_ie_i\transpose L = 0,
    \label{eqn:Riccati_FP_varphi}
\end{multline}
with terminal condition $F^{i,k+1}_T = cLe_ie_i\transpose L$.

Plugging  the ansatz into equation~\eqref{eqn:opt_ctrl_FP} yields \(\phi^{i,k+1}(t,x) = -e_i\transpose (F^{i,k+1}_t + qL)x\). Therefore, \(\forall k\in\N\),
\begin{equation}
    \varphi^{i,k+1}_t = -(F^{i,k+1}_t + qL)e_i.
    \label{eqn:varphi_F_FP}
\end{equation}
This completes the proof. 
\end{proof}

This lemma indicates that if the FP procedure begins (at stage 0) with linear strategies (in $x$) for all players, it will result in every strategy being linear in $x$ for all subsequent stages. 

Note that $\{F^{j,k+1}\}_{j \in [N]}$ and $\{F^{j,k}\}_{j \in [N]}$ are coupled through the term \(\varphi^{j,k}_t\), represented as \(\varphi^{j,k}_t = -(F^{j,k}_t + qL)e_j\) (cf. \eqref{eqn:varphi_F_FP}).
This yields the following recursive Riccati equations, which describe the update from $\{F^{j,k}\}_{j \in[N]}$ to $F^{i,k+1}$:
\begin{multline}
    \dot{F}^{i,k+1}_t - F^{i,k+1}_te_ie_i\transpose F^{i,k+1}_t + (\EPS-q^2)Le_ie_i\transpose L - (a+q)LF^{i,k+1}_t - (a+q)F^{i,k+1}_tL \\
    - F^{i,k+1}_t\sum_{j\neq i} e_je_j\transpose F^{j,k}_t - \sum_{j\neq i} F^{j,k}_te_je_j\transpose F^{i,k+1}_t = 0,\quad
    F^{i,k+1}_T = cLe_ie_i\transpose L.
    \label{eqn:recursive_Riccati_FP}
\end{multline}
Therefore, the FP procedure for the game introduced in Section~\ref{sec:model_setup} has been completely described by system~\eqref{eqn:recursive_Riccati_FP} following from Assumption~\ref{assu:feedback}.
In Proposition~\ref{prop:recursive_Riccati_welldefined} we prove that, given \(\{F^{j,k}\}_{j\in[N]}\), system~\eqref{eqn:recursive_Riccati_FP} uniquely determines \(\{F^{j,k+1}\}_{j\in[N]}\) on \([0,T]\) for any \(T>0\), ensuring that the FP iteration scheme is well-defined.

Unless otherwise specified, all matrix norms are understood as the matrix \(2\)-norm, and inequalities between symmetric matrices are understood in the positive semi-definite sense. That is,  for \(A,B\in\mathbb{S}^{N\times N}\), we write \(A\geq B\) if and only if \(A-B\) is positive semi-definite.

\begin{prop}
    \label{prop:recursive_Riccati_welldefined}
    Given \(\{F^{j,0}\}_{j\in[N]}\) such that \(F^{j,0}_t\) is bounded on \([0,T]\) for any \(j\in[N]\), system~\eqref{eqn:recursive_Riccati_FP} admits a unique solution \(\{F^{j,k+1}\}_{j\in[N]}\) on \([0,T]\) for any \(T>0\) and \(k\in\N\).
    Moreover, \(F^{j,k+1}_t\geq 0\) for any \(t\in[0,T]\), \( k\in\N\), \(j\in[N]\).
\end{prop}

\begin{proof}
    We prove the result by induction on $k$. Suppose that \(\{F^{j,k}\}_{j\in[N]}\) is given.
    Define the function \(f:[0,T]\times \mathbb{S}^{N\times N}\to \mathbb{S}^{N\times N}\) such that
    \begin{multline}
        f(t,X;\{F^{j,k}\}_{j\in[N]}) :=  Xe_ie_i\transpose X - (\EPS-q^2)Le_ie_i\transpose L + (a+q)LX + (a+q)XL \\
        + X\sum_{j\neq i} e_je_j\transpose F^{j,k}_t + \sum_{j\neq i} F^{j,k}_te_je_j\transpose X.
    \end{multline}
    Then system~\eqref{eqn:recursive_Riccati_FP} can be rewritten as \(\dot{F}^{i,k+1}_t = f(t,F^{i,k+1}_t;\{F^{j,k}\}_{j\in[N]})\) with the terminal condition \(F^{i,k+1}_T = cLe_ie_i\transpose L\).
    
    We claim that: \(f\) is Lipschitz continuous in \(X\) if there exists \(M>0\) such that \(\NORM{X}\leq M\).
    This follows directly from the triangle inequality:
    \begin{align}
        \NORM{f(t,X_1) - f(t,X_2)}\leq 2\left(M + (a+q)\NORM{L} + \NORM{\sum_{j\neq i}e_je_j\transpose F^{j,k}_t}\right)\NORM{X_1-X_2}.
    \end{align}
    Since \(f\) is continuous in \(t\), the Picard-Lindelöf theorem \cite{teschl2012ordinary} ensures that system~\eqref{eqn:recursive_Riccati_FP} has a unique local solution at time \(T\).
    
    To achieve the global existence and uniqueness result, it suffices to establish the \textit{a priori} boundedness of \(F^{i,k+1}_t\).
    Consider the auxiliary ODE system for \(\{G^{j,k+1}\}_{j\in[N]}\):
    \begin{multline}
    \dot{G}^{i,k+1}_t + (\EPS-q^2)Le_ie_i\transpose L - (a+q)LG^{i,k+1}_t - (a+q)G^{i,k+1}_tL \\
    - G^{i,k+1}_t\sum_{j\neq i} e_je_j\transpose F^{j,k}_t - \sum_{j\neq i} F^{j,k}_te_je_j\transpose G^{i,k+1}_t = 0,\quad
    G^{i,k+1}_T = cLe_ie_i\transpose L.
    \label{eqn:auxiliary_G}
    \end{multline}
    Since this is a linear ODE system with bounded time-variant coefficients, it admits a unique bounded solution \(\{G^{j,k+1}\}_{j\in[N]}\) on \([0,T]\) for any \(T>0\).
    Consider the difference \(D^{i,k+1}_t := F^{i,k+1}_t - G^{i,k+1}_t\), which satisfies \(D^{i,k+1}_T = 0\) and \(\dot{D}^{i,k+1}_t\geq 0\).
    Therefore, \(D^{i,k+1}_t\leq 0\), implying an upper bound \(F^{i,k+1}_t\leq G^{i,k+1}_t\), \(\forall t\in[0,T]\).

    For any \(j\in[N]\), \(k\in\N\),  the cost functionals $f^j$ and $g^j$ stay non-negative, so does the value function \(v^{j, k+1}(t,x)= \frac{1}{2}x\transpose F^{j,k+1}_t x + h^{j,k+1}_t\), \(\forall x\in\R^N\). Consequently, one obtains the lower bound \(F^{j,k+1}_t\geq 0\), \(\forall t\in[0,T]\).

    Combining both parts and setting \(M:=\sup_{t\in[0,T]}\sup_{j\in[N]}\NORM{G^{j,k+1}_t}\) conclude the proof.  
\end{proof}

\subsection{Convergence results}

In this section, we prove that the iterative scheme described by \eqref{eqn:recursive_Riccati_FP} converges to a limit, which is the Markovian NE. 
We first define a notion of convergence when discussing the convergence of strategies. Since the players' strategies are represented by feedback functions of state processes, we examine the pointwise convergence of the feedback function \(\phi^{i,k}(t,x)\) as \(k\to\infty\) for any fixed \(i\in[N]\).
As implied by Lemma~\ref{lem:FP} and equation~\eqref{eqn:varphi_F_FP}, the pointwise convergence of \(\phi^{i,k}\) is equivalent to the convergence of \(F^{i,k}\).
Therefore, it suffices to prove (in Theorem~\ref{thm:conv_FP}) that the solution \(F^{i,k}\) to system~\eqref{eqn:recursive_Riccati_FP} admits a limit as \(k\to\infty\) for any fixed \(i\in[N]\) , and the limit satisfies equation~\eqref{eqn:Riccati}. 
The proof relies on the following technical lemma as a matrix inequality.

\begin{lem}
    For symmetric positive semi-definite matrices \(A_1,\ldots,A_N\), the following inequality holds:
    \begin{equation}
        \NORM{\sum_{i=1}^N e_ie_i\transpose A_i}\leq \NORM{\sum_{i=1}^N A_i}.
    \end{equation}
    \label{lemma:matrix_norm_A_B}
\end{lem}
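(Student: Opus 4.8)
The plan is to prove the bound through the variational characterization of the spectral norm, handling the asymmetry of the left-hand matrix with care. Set $M := \sum_{i=1}^N e_ie_i\transpose A_i$ and $S := \sum_{i=1}^N A_i$. The first observation is that $e_ie_i\transpose A_i$ is the matrix obtained from $A_i$ by zeroing out every row except the $i$-th; hence $M$ has $(i,j)$ entry $(A_i)_{ij}$, i.e.\ its $i$-th row is the $i$-th row of $A_i$. In particular $M$ is in general \emph{not} symmetric, so rather than comparing eigenvalues directly I would use
\[
    \NORM{M} = \sup_{\NORM{u}=\NORM{w}=1} u\transpose M w
\]
and aim to bound this bilinear form by $\NORM{S}\,\NORM{u}\,\NORM{w}$.

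Next, I would expand $u\transpose M w = \sum_{i=1}^N u_i\, e_i\transpose A_i w$ and exploit semi-positive-definiteness by factoring $A_i = A_i^{1/2}A_i^{1/2}$, rewriting each summand as the Euclidean inner product $\langle A_i^{1/2}(u_i e_i),\, A_i^{1/2} w\rangle$. Viewing the sum over $i$ as a single inner product of two block vectors in $\R^{N^2}$ and applying Cauchy--Schwarz yields
\[
    |u\transpose M w| \le \Big(\sum_{i=1}^N u_i^2\, e_i\transpose A_i e_i\Big)^{1/2}\Big(\sum_{i=1}^N w\transpose A_i w\Big)^{1/2}.
\]

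The two factors are then estimated separately. The second is immediate: $\sum_{i} w\transpose A_i w = w\transpose S w \le \NORM{S}\,\NORM{w}^2$, since $S$ is symmetric semi-positive-definite so its spectral norm is its largest eigenvalue. For the first factor, the decisive point is that each diagonal entry is controlled by $S$: because $S - A_i = \sum_{j\neq i} A_j \ge 0$, we have $e_i\transpose A_i e_i \le e_i\transpose S e_i \le \NORM{S}$, the last step again using $S \ge 0$. Hence $\sum_{i} u_i^2\, e_i\transpose A_i e_i \le \big(\max_i e_i\transpose A_i e_i\big)\NORM{u}^2 \le \NORM{S}\,\NORM{u}^2$. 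Combining the two estimates gives $|u\transpose M w| \le \NORM{S}\,\NORM{u}\,\NORM{w}$, and taking the supremum over unit vectors $u,w$ delivers the claim.

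The main obstacle—and what makes the statement nontrivial—is the asymmetry of $M$, which blocks any one-line appeal to monotonicity of eigenvalues. The key idea is to split the bilinear form with the square-root factorization so that Cauchy--Schwarz isolates precisely the diagonal quantities $e_i\transpose A_i e_i$: these are the only contributions of $A_i$ that survive in the $i$-th row of $M$, and they are exactly what can be dominated by $\NORM{S}$ through the semidefinite ordering. The remaining estimates are routine.
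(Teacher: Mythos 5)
Your proof is correct, but it follows a genuinely different route from the paper's. The paper never looks at the bilinear form: writing $B := \sum_{i=1}^N e_ie_i\transpose A_i$, it squares the matrix, using $\NORM{B}^2 = \NORM{B\transpose B} = \NORM{\sum_{i=1}^N A_ie_ie_i\transpose A_i}$, and then runs a chain of Loewner-order estimates, $\sum_i A_ie_ie_i\transpose A_i \le \sum_i A_i^2 \le \max_i\NORM{A_i}\,\sum_i A_i$, combined with $\max_i \NORM{A_i}\le \NORM{\sum_i A_i}$, to conclude $\NORM{B}^2\le\NORM{\sum_i A_i}^2$. You instead dualize the norm, factor $A_i = A_i^{1/2}A_i^{1/2}$, and apply Cauchy--Schwarz, so that the row-selection structure of $M$ enters only through the diagonal entries $e_i\transpose A_ie_i$, which you dominate via $A_i\le S$ and $e_i\transpose S e_i \le \NORM{S}$. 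The two arguments lean on the same structural facts (semi-positive-definiteness, with $e_ie_i\transpose\le I$ doing the work in the paper versus $A_i\le S$ in yours), but the mechanics differ: the paper's is shorter at the matrix-algebra level and stays entirely within norm and order inequalities, while yours is more elementary---only Cauchy--Schwarz plus domination of diagonal entries---and actually delivers the slightly sharper intermediate bound $\NORM{M}\le\left(\max_i e_i\transpose A_ie_i\right)^{1/2}\NORM{S}^{1/2}$, which refines the paper's implicit bound $\left(\max_i\NORM{A_i}\right)^{1/2}\NORM{S}^{1/2}$ because $e_i\transpose A_ie_i\le\NORM{A_i}$.
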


\begin{proof}
To simplify the notation, let  \(B_i:= \sum_{i=1}^N e_ie_i\transpose A_i\).
    Since \(\sum_{i=1}^N A_ie_ie_i\transpose A_i\leq \sum_{i=1}^N A_i^2\), it is clear
    \begin{equation}
        \NORM{B_i}^2 = \NORM{B_i\transpose B_i} = \NORM{\sum_{i=1}^N A_ie_ie_i\transpose A_i}\leq \NORM{\sum_{i=1}^N A_i^2}.
    \end{equation}
    Due to the matrix inequality \(\sum_{i=1}^N A_i^2\leq \sum_{i=1}^N \NORM{A_i}\;A_i\leq \max_{i\in[N]}\NORM{A_i}\;\sum_{i=1}^N A_i\), the following inequality holds:
    \begin{equation}
        \NORM{\sum_{i=1}^N A_i^2}\leq \max_{i\in[N]}\NORM{A_i}\;\NORM{\sum_{i=1}^N A_i}\leq \NORM{\sum_{i=1}^N A_i}^2,
    \end{equation}
    which concludes the proof.
\end{proof}

We next state the first main result of this section, concerning the convergence of $F^{i,k}$ as $k \to \infty$.

\begin{thm}
    For any function \(A^{i,k}:[0,T]\to \mathbb{S}^{N\times N}\), denote \(\Delta A^{i,k} = A^{i,k+1} - A^{i,k}\) as the difference from stage \(k\) to \(k+1\).

    Under Assumption~\ref{assu:feedback}, if the fictitious play starts from \(F^{i,0}\equiv 0\) for any \(i\in[N]\), and the following condition
    \begin{equation}
        \max\left\{[c + T(\EPS-q^2)]\;\NORM{L}^2, \frac{2}{\log 2}(a+q)T\NORM{L}\right\}\leq 1, \quad T \leq \frac{1}{100},
        \label{eqn:FP_condition}
    \end{equation}
    holds, then the scheme \eqref{eqn:recursive_Riccati_FP} produced by fictitious play converges to a Markovian NE for the linear-quadratic games on graphs~\eqref{eqn:state_dynamics}--\eqref{def:J}.
    \label{thm:conv_FP}
\end{thm}

\begin{proof}

\noindent \textbf{Step 1: The a priori bound for $\NORM{F^{i,k}_t}$.}
A change of variable in time \(W^{i,k}_t := F^{i,k}_{T-t}\) yields
\begin{multline}
    \dot{W}^{i,k+1}_t = - (W^{i,k+1}_t - W^{i,k}_t)e_ie_i\transpose (W^{i,k+1}_t - W^{i,k}_t) + (\EPS-q^2)Le_ie_i\transpose L  - (a+q)LW^{i,k+1}_t \\
    - (a+q)W^{i,k+1}_tL 
    - W^{i,k+1}_t\sum_{j=1}^N e_je_j\transpose W^{j,k}_t - \sum_{j = 1}^N W^{j,k}_te_je_j\transpose W^{i,k+1}_t + W^{i,k}_te_ie_i\transpose W^{i,k}_t,
\end{multline}
with terminal condition $W^{i,k+1}_0 = cLe_ie_i\transpose L$.
Using \(W^{i,k+1}_t = W^{i,k+1}_0 + \int_0^t \dot{W}^{i,k+1}_s\,\ud s\) and \((W^{i,k+1}_t - W^{i,k}_t)e_ie_i\transpose (W^{i,k+1}_t - W^{i,k}_t)\geq 0\), followed by summing both sides with respect to \(i\in[N]\), gives the following matrix inequality:
{\small\begin{multline}
    \sum_{i=1}^N W^{i,k+1}_t \leq [c+ t(\EPS - q^2)]L^2 - (a+q)L \int_0^t \sum_{i=1}^N W^{i,k+1}_s\,\ud s - (a+q) \int_0^t \sum_{i=1}^N W^{i,k+1}_s\,\ud s\;L\\
    - \int_0^t \sum_{i=1}^N W^{i,k+1}_s\sum_{j=1}^N e_je_j\transpose W^{j,k}_s\,\ud s - \int_0^t \sum_{j=1}^N  W^{j,k}_se_je_j\transpose\sum_{i=1}^N W^{i,k+1}_s\,\ud s + \int_0^t \sum_{i=1}^N W^{i,k}_se_ie_i\transpose W^{i,k}_s\,\ud s.
\end{multline}}
Let \(A^k_t := \sum_{j=1}^N W^{j,k}_t\) and \(B^k_t := \sum_{j=1}^N e_je_j\transpose W^{j,k}_t\).
The matrix inequality above is rewritten as
\begin{multline}
    A^{k+1}_t\leq [c+ t(\EPS - q^2)]L^2 - (a+q)L \int_0^t A^{k+1}_s\,\ud s - (a+q) \int_0^t A^{k+1}_s\,\ud s\;L\\
    - \int_0^t A^{k+1}_sB^k_s\,\ud s - \int_0^t (B^k_s)\transpose A^{k+1}_s\,\ud s + \int_0^t (B^k_s)\transpose B^k_s\,\ud s.
    \label{eqn:ineq_A_B}
\end{multline}
Since Proposition~\ref{prop:recursive_Riccati_welldefined} implies that \(W^{i,k}_t\geq 0\), \(\forall t\in[0,T]\),
Lemma~\ref{lemma:matrix_norm_A_B} tells \(\NORM{B^k_t}\leq \NORM{A^k_t}\).
Combining with inequality~\eqref{eqn:ineq_A_B}, we have
\begin{equation}
    \NORM{A^{k+1}_t}\leq [c+ t(\EPS - q^2)]\NORM{L}^2 + 2\int_0^t \left[(a+q)\NORM{L} + \NORM{A^k_s}\right]\;\NORM{A^{k+1}_s}\,\ud s + \int_0^t \NORM{A^k_s}^2\,\ud s.
\end{equation}
Then one can apply Grönwall's inequality to get:
\begin{equation}
    \NORM{A^{k+1}_t} \leq  \left([c+ t(\EPS - q^2)]\NORM{L}^2 + \int_0^t \NORM{A^k_s}^2\,\ud s\right)
    e^{2\int_0^t (a+q)\NORM{L} + \NORM{A^k_s} \,\ud s}.
\end{equation}
Taking the supremum on both sides with respect to \(t\in[0,T]\) yields
\begin{equation}
    \sup_{t\in[0,T]}\NORM{A^{k+1}_t} \leq  \bigg([c+ T(\EPS - q^2)]\;\NORM{L}^2 + T\sup_{t\in[0,T]}\NORM{A^k_t}^2\bigg)
    e^{2T (a+q)\NORM{L} + 2T\sup_{t\in[0,T]}\NORM{A^k_t}}.
\end{equation}
Define \(h:\R_+\to \R_+\) as a continuous increasing function:
\begin{equation}
    h(x) := \left([c+ T(\EPS - q^2)]\;\NORM{L}^2 + Tx^2\right)e^{2T (a+q)\NORM{L}}e^{2Tx}.
    \label{eqn:h_FP}
\end{equation}
Under condition~\eqref{eqn:FP_condition}, \([c+ T(\EPS - q^2)]\;\NORM{L}^2\leq 1\) and \(e^{2T (a+q)\NORM{L}}\leq 2\), so
\begin{equation}
    h(x)\leq 2\left(1 + 0.01x^2\right)e^{0.02x}:=g(x).
\end{equation}
Since \(g(2)>2\), \(g(3)<3\), by the intermediate value theorem, \(g\) has a fixed point \(x^* \in(2, 3)\).

Using the fixed point \(x^*\), we establish a uniform \textit{a priori} bound for \(\NORM{A^{k}_t}\). Specifically, 
we claim that \(\sup_{t\in[0,T]}\NORM{A^{k}_t}\leq x^*, \ \forall k\in\N\), which follows from straightforward induction argument.
For the base case $k = 0$, since FP starts from \(F^{i,0}\equiv 0\),  we have \(\sup_{t\in[0,T]}\NORM{A^{0}_t} = 0\leq x^*\) that confirms the claim. 
Now, assume the claim holds for \(k = l\), i.e., \(\sup_{t\in[0,T]}\NORM{A^{l}_t}\leq x^*\). By applying the recursive structure, we obtain
\begin{align}
    \sup_{t\in[0,T]}\NORM{A^{l+1}_t}\leq h\left(\sup_{t\in[0,T]}\NORM{A^{l}_t}\right)\leq g\left(\sup_{t\in[0,T]}\NORM{A^{l}_t}\right)\leq g(x^*) = x^*,
\end{align}
where the final inequality follows from the induction hypothesis \(\sup_{t\in[0,T]}\NORM{A^{l}_t}\leq x^*\) and the monotonicity of \(g\).
Consequently, we conclude that \(\NORM{\sum_{j=1}^N F^{j,k}_t}\leq x^*\), \(\forall t\in[0,T]\), \(\forall k\in\N\).
Combining with the fact that \(F^{i,k}_t\) takes values as positive semi-definite matrices proves the a priori bound \(\NORM{F^{j,k}_t}\leq x^*\), \(\forall k\in\N\), \(\forall t\in[0,T]\), \(\forall j\in[N]\).

\medskip

\noindent\textbf{Step 2: The uniform convergence.}
Using \(F^{i,k+1}_t = F^{i,k+1}_T - \int_t^T \dot{F}^{i,k+1}_s\,\ud s\) and replacing \(\dot{F}^{i,k+1}_s\) by equation~\eqref{eqn:recursive_Riccati_FP} yields the following upper bound:
\begin{multline}
    \NORM{\Delta F^{i,k}_t} \hspace{-2pt}
    \leq 2(a+q)\NORM{L}\int_t^T \NORM{\Delta F^{i,k}_s}\,\ud s
    + \int_t^T \left(\NORM{F^{i,k}_s} \NORM{\Delta F^{i,k}_s} + \NORM{F^{i,k+1}_s}\NORM{\Delta F^{i,k}_s}\right)\,\ud s\\
    + 2\int_t^T \left(\NORM{\Delta F^{i,k}_s} \sum_{j\neq i}\NORM{F^{j,k-1}_s} + \NORM{F^{i,k+1}_s}  \sum_{j\neq i}\NORM{\Delta F^{j,k-1}_s}\right)\,\ud s.
    \label{eqn:conv_FP_proof_tmp1}
\end{multline}
Taking the supremum on both sides with respect to \(i\in[N]\), and using the a priori bound \(\NORM{F^{i,k}_t}\leq x^*\) established in the last step, the following inequality is proved:
\begin{multline}
     \sup_{i\in[N]}\NORM{\Delta F^{i,k}_t}  
    \leq \left[2(a+q)\NORM{L} + 2x^* + 2Nx^*\right]\int_t^T \sup_{i\in[N]}\NORM{\Delta F^{i,k}_s}\,\ud s \\
    + 2Nx^*\int_t^T \sup_{i\in[N]}\NORM{\Delta F^{i,k-1}_s}\,\ud s.
\end{multline}
Define \(g_k(t) := e^{\beta t}\sup_{i\in[N]}\NORM{\Delta F^{i,k}_t}\) for some sufficiently large \(\beta >0\) such that \(\frac{2(a+q)\NORM{L} + 2x^* + 2Nx^*}{\beta}<\frac{1}{3}\).
It follows that
    \begin{equation}
        g_k(t)\leq [2(a+q)\NORM{L} + 2x^* + 2Nx^*] e^{\beta t}\int_t^T e^{-\beta s}[g_k(s) + g_{k-1}(s)]\,\ud s.
    \end{equation}
    Since \(g_k(t)\) is a continuous function on a compact domain, it satisfies \(\sup_{s\in[0,T]}g_k(s) < \infty\), which provides an upper bound for the integral above:
    \begin{equation}
        g_k(t)\leq \frac{2(a+q)\NORM{L} + 2x^* + 2Nx^*}{\beta}(1-e^{-\beta(T-t)}) \bigg(\sup_{s\in[0,T]}g_k(s) + \sup_{s\in[0,T]}g_{k-1}(s)\bigg).
    \end{equation}
    Taking the supremum with respect to \(t\in[0,T]\) on both sides yields the contraction
    \begin{equation}
        \label{eqn:contrac}
        \sup_{t\in[0,T]}g_k(t)\leq \frac{1}{2} \sup_{t\in[0,T]}g_{k-1}(t).
    \end{equation}
    This implies that \(\Delta F^k\) converges to zero under the norm \(\NORM{\Delta F^k}_\beta:= \sup_{t\in[0,T]}e^{\beta t}\sup_{i\in[N]}\NORM{\Delta F^{i,k}_t}\) as \(k\to\infty\).
    Since, on a finite time horizon \([0,T]\), the norm \(\NORM{\cdot}_\beta\) is equivalent to \(\NORM{\cdot}_0\), where \(\NORM{\Delta F^k}_0:= \sup_{t\in[0,T]}\sup_{i\in[N]}\NORM{\Delta F^{i,k}_t}\),
    this implies that \(F_t^{i,k}\) converges uniformly in \(t\in[0,T]\), for any fixed \(i\in[N]\), as \(k\to\infty\). 
    We denote the uniform limit by \(F^{i,\infty}_t := \lim_{k\to\infty}F^{i,k}_t\).

\medskip
\noindent\textbf{Step 3: The limit as the Nash equilibrium.}
    It remains to prove that \((F^{1,\infty}_t,\cdots,F^{N,\infty}_t)\), the limit of $(F^{1,k}_t,\cdots,F^{N,k}_t)$ as $k \to \infty$, is a solution to the coupled Riccati system~\eqref{eqn:Riccati}. Taking the limit as \(k\to\infty\) in equation~\eqref{eqn:recursive_Riccati_FP}, and it remains to justify the interchange of the differentiation with respect to \(t\) and the limit with respect to \(k\), i.e., 
    \(\lim_{k\to\infty}\dot{F}^{i,k}_t= \dot{F}^{i,\infty}_t\).
    
    Rewriting equation~\eqref{eqn:recursive_Riccati_FP}, we express
    $$\dot{F}^{i,k+1}_t = H(F^{i,k+1}_t,F^{1,k}_t,\cdots,F^{i-1,k}_t,F^{i+1,k}_t,\cdots,F^{N,k}_t),$$ 
    where \(H:(\mathbb{S}^{N\times N})^N\to \mathbb{S}^{N\times N}\) is a continuous function explicitly determined by the structure of equation~\eqref{eqn:recursive_Riccati_FP}. 
    Since \(\NORM{F^{j,k}_t}\leq x^*\) for all \(k\in\N\), \(j\in[N]\), and \(t\in[0,T]\), the domain of \(H\) is compact, and \(H\) is uniformly continuous.
    It is well-known that uniformly continuous transformations preserve uniform convergence.
    Since the convergence of \(F^{i,k}_t\) is uniform in \(t\in[0,T]\), it follows that \(\dot{F}^{i,k+1}_t\) also converges uniformly in \(t\in[0,T]\) as \(k\to\infty\). This justifies the interchange of the limit and differentiation, completing the proof.
\end{proof}

Lastly, we rescale the time horizon to relax condition~\eqref{eqn:FP_condition}, as shown in Corollary~\ref{cor:time_scaling}.
This concludes the investigation of the convergence of FP for our linear-quadratic games on graphs.

\begin{cor}
    Under Assumption~\ref{assu:feedback}, if the fictitious play starts from \(F^{i,0}\equiv 0\) for any \(i\in[N]\) and the following condition
    \begin{equation}
        C_L := \max\left\{100T[c + T(\EPS-q^2)]\;\NORM{L}^2, \frac{2}{\log 2}(a+q)T\NORM{L}\right\}\leq 1,
        \label{eqn:FP_final_condition}
    \end{equation}
    holds, then the scheme \eqref{eqn:recursive_Riccati_FP} produced by fictitious play converges to the Markovian NE for the linear-quadratic games on graphs~\eqref{eqn:state_dynamics}--\eqref{def:J}.
    \label{cor:time_scaling}
\end{cor}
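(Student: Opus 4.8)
The plan is to obtain the corollary from Theorem~\ref{thm:limit_NE} by a time-rescaling argument that trades the restriction $T\le 1/100$ against a factor of $T$ in the first entry of the maximum. Fix $\lambda>0$, to be chosen, and rescale time via $t=\lambda s$ with $s\in[0,T/\lambda]$. Setting $\tilde X^i_s:=X^i_{\lambda s}$, using that $W^i_{\lambda s}\overset{d}{=}\sqrt\lambda\,\tilde W^i_s$ for a Brownian motion $\tilde W^i$, and rescaling the control by $\hat\alpha^i_s:=\lambda\alpha^i_{\lambda s}$, the dynamics \eqref{eqn:state_dynamics} and the cost \eqref{def:J} (multiplied by the positive constant $\lambda$, which leaves the argmin and hence the NE unchanged) become a linear-quadratic game on the \emph{same} graph $G$ over the horizon $\tilde T:=T/\lambda$, with parameters $\tilde a=\lambda a$, $\tilde q=\lambda q$, $\tilde\EPS=\lambda^2\EPS$, $\tilde c=\lambda c$, and $\tilde\sigma=\sqrt\lambda\,\sigma$. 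I would first record this reduction carefully, checking that the control cost returns to the normalized form $\tfrac12(\hat\alpha)^2$ after these substitutions and that the admissibility and well-posedness requirements are preserved, since $\tilde q^2=\lambda^2q^2\le\lambda^2\EPS=\tilde\EPS$.

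The next step is to connect the two fictitious-play schemes at the level of the Riccati systems. Writing $\tilde F^{i,k}$ for the solution of \eqref{eqn:recursive_Riccati_FP} associated with the rescaled parameters $(\tilde a,\tilde q,\tilde\EPS,\tilde c,\tilde T)$, I claim the explicit relation $\tilde F^{i,k}_s=\lambda F^{i,k}_{\lambda s}$. Indeed, substituting this ansatz into the rescaled version of \eqref{eqn:recursive_Riccati_FP} and using $\dot{\tilde F}^{i,k}_s=\lambda^2\dot F^{i,k}_{\lambda s}$, every term picks up a common factor $\lambda^2$ (e.g.\ $(\tilde\EPS-\tilde q^2)=\lambda^2(\EPS-q^2)$ and $(\tilde a+\tilde q)L\tilde F^{i,k+1}_s=\lambda^2(a+q)LF^{i,k+1}_{\lambda s}$), while the terminal data match because $\tilde F^{i,k}_{\tilde T}=\lambda F^{i,k}_{T}=\lambda c\,Le_ie_i\transpose L=\tilde c\,Le_ie_i\transpose L$; uniqueness of the ODE then gives the identity. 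In particular the two schemes start from zero simultaneously, and $\tilde F^{i,k}_s$ converges uniformly on $[0,\tilde T]$ if and only if $F^{i,k}_t=\lambda^{-1}\tilde F^{i,k}_{t/\lambda}$ converges uniformly on $[0,T]$, with the corresponding limits solving, respectively, the rescaled and the original coupled Riccati system \eqref{eqn:Riccati}.

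It then remains to apply Theorem~\ref{thm:limit_NE} to the rescaled game and to optimize over $\lambda$. Substituting the rescaled parameters into condition \eqref{eqn:FP_condition}, a short computation shows $[\tilde c+\tilde T(\tilde\EPS-\tilde q^2)]\NORM{L}^2=\lambda[c+T(\EPS-q^2)]\NORM{L}^2$, while the mean-reversion term is scale-invariant, $(\tilde a+\tilde q)\tilde T=(a+q)T$, so that \eqref{eqn:FP_condition} for the rescaled game reads $\max\{\lambda[c+T(\EPS-q^2)]\NORM{L}^2,\ \tfrac{2}{\log 2}(a+q)T\NORM{L}\}\le 1$ together with $T/\lambda\le 1/100$. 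Choosing $\lambda=100T$ makes the terminal-horizon constraint tight, $\tilde T=1/100$, and turns the first entry into $100T[c+T(\EPS-q^2)]\NORM{L}^2$; the displayed maximum then coincides exactly with $C_L$ in \eqref{eqn:FP_final_condition}. Hence $C_L\le1$ guarantees that the rescaled game satisfies \eqref{eqn:FP_condition}, so its fictitious play converges to its Markovian NE by Theorem~\ref{thm:limit_NE}; transferring back through $\tilde F^{i,k}_s=\lambda F^{i,k}_{\lambda s}$ yields convergence of the original scheme \eqref{eqn:recursive_Riccati_FP} to the Markovian NE of \eqref{eqn:state_dynamics}--\eqref{def:J}.

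I expect the main obstacle to be bookkeeping rather than conceptual: getting every parameter transformation right under the simultaneous rescaling of time, state, control, and cost, and in particular verifying that after multiplying the functional by $\lambda$ and rescaling the control the quadratic control term returns to the normalized form $\tfrac12(\hat\alpha)^2$ demanded by the model. Once the identity $\tilde F^{i,k}_s=\lambda F^{i,k}_{\lambda s}$ is in hand, the remaining algebra, namely checking $(\tilde a+\tilde q)\tilde T=(a+q)T$ and that the choice $\lambda=100T$ reproduces $C_L$, is routine.
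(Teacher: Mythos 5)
Your proposal is correct and follows essentially the same route as the paper: the paper defines $G^{i,k}_t := \frac{T}{T'}F^{i,k}_{\frac{T}{T'}t}$ (your $\lambda = T/T'$ with $T' = 1/100$, i.e.\ $\lambda = 100T$), shows the rescaled Riccati system \eqref{eqn:recursive_Riccati_FP} has parameters $(T', \frac{T}{T'}a, \frac{T}{T'}q, (\frac{T}{T'})^2\EPS, \frac{T}{T'}c)$, and then applies condition \eqref{eqn:FP_condition} to these, exactly as you do. Your additional game-level justification (time change of the SDE, rescaled control, cost multiplied by $\lambda$) is a sound but unnecessary supplement, since the paper works purely at the level of the Riccati systems.
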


\begin{proof}
    Define \(G^{i,k}_t := \frac{T}{T'}F^{i,k}_{\frac{T}{T'}t}\) for some positive \(T'\), equation~\eqref{eqn:recursive_Riccati_FP} becomes:
    \begin{multline}
        \dot{G}^{i,k+1}_t - G^{i,k+1}_te_ie_i\transpose G^{i,k+1}_t + \left(\frac{T}{T'}\right)^2(\EPS-q^2)Le_ie_i\transpose L - \frac{T}{T'}(a+q)LG^{i,k+1}_t \\
        - \frac{T}{T'}(a+q)G^{i,k+1}_tL 
        - G^{i,k+1}_t\sum_{j\neq i} e_je_j\transpose G^{j,k}_t - \sum_{j\neq i} G^{j,k}_te_je_j\transpose G^{i,k+1}_t = 0,
    \end{multline}
    with terminal condition $G^{i,k+1}_{T'} = \frac{T}{T'}cLe_ie_i\transpose L$.
    This implies that the FP procedure for the linear-quadratic games on graphs~\eqref{eqn:state_dynamics}--\eqref{def:J} with model parameters \((T,a,q,\EPS,c)\) is equivalent to the one with model parameters \((T',a',q',\EPS',c')\), where
    \begin{equation}
        a' = \frac{T}{T'}a,\quad q' = \frac{T}{T'}q,\quad \EPS' = \left(\frac{T}{T'}\right)^2\EPS,\quad c' = \frac{T}{T'}c.
    \end{equation}
    Condition~\eqref{eqn:FP_condition} imposed on the new set of model parameters \((T',a',q',\EPS',c')\) requires:
    \begin{equation}
        \max\left\{[c' + T'(\EPS'-(q')^2)]\;\NORM{L}^2, \frac{2}{\log 2}(a'+q')T'\NORM{L}\right\}\leq 1,\quad T' = \frac{1}{100},
    \end{equation}
    which concludes the proof.
\end{proof}

\begin{rem}[Discussion on condition~\eqref{eqn:FP_final_condition}]
The condition~\eqref{eqn:FP_final_condition} imposes a smallness constraint on the model parameters, a requirement that is commonly found in the literature. 
For instance, the convergence of FP to the open-loop NE for linear-quadratic games on complete graphs requires that \(T\) or \((c, q, \EPS - q^2)\) are sufficiently small, or $a$ is sufficiently large \cite{hu2019deep}. The convergence of the Deep BSDE method \cite{han2017deep} for stochastic control problems with Lipschitz models requires a small time duration or small Lipschitz coefficients \cite{han2022convergence}, as well as the well-posedness of forward-backward stochastic differential equations (FBSDEs) \cite{ma2015well}.
Condition~\eqref{eqn:FP_final_condition} aligns with these requirements,
and it is certainly not the strictest constraint and can be easily extended. As long as reasonable bounds are imposed on model parameters such that the function \(h\) in equation~\eqref{eqn:h_FP} has a fixed point in \(\R_+\), Theorem~\ref{thm:conv_FP} and Corollary~\ref{cor:time_scaling} remain valid and provide a standard way to find a condition, under which FP is guaranteed to converge. 
Lastly, we want to emphasize two points: (i) FP does not necessarily have to start from \(F^{i,0}\equiv 0\). From the proof of Theorem~\ref{thm:conv_FP}, the convergence results still hold as long as \(\sup_{t\in[0,T]}\NORM{\sum_{j=1}^N F^{j,0}_t}\leq x^*\). 
(ii) Condition~\eqref{eqn:FP_final_condition} does not impose any restrictions on the number of players \(N\).
The convergence result still holds even when there is a large number of players.

\end{rem}

\medskip

\begin{rem}[The graph structure and the convergence of fictitious play]

We are particularly interested in the connection between the graph structure and the convergence of fictitious play (FP). This relationship can be analyzed through \(C_L\) defined in condition~\eqref{eqn:FP_final_condition}.
Fixing the other model parameters, an increase in \(\NORM{L}\) leads to an increase in \(C_L\), making it more likely to violate condition~\eqref{eqn:FP_final_condition}.
Taking a closer look, we note that any graph Laplacian \(L\) must satisfy \(0\leq\NORM{L}\leq 2\).
If a graph has no isolated vertices, its graph Laplacian satisfies \(\NORM{L}\geq \frac{N}{N-1}\) \cite{chung1997spectral}.
A connected bipartite graph has the norm of the graph Laplacian achieving the upper bound \(\NORM{L}=2\), while a complete graph achieves the lower bound \(\NORM{L} = \frac{N}{N-1}\).
From the definition of \(g_k(t)\) and the contraction~\eqref{eqn:contrac} in the proof of Theorem~\ref{thm:conv_FP}, we derive an upper bound for the convergence rate of FP:
\begin{equation}
    \sup_{t\in[0,T]}\sup_{i\in[N]}\NORM{\Delta F^{i,k}_t}\leq 2^{-k}e^{6T[(a+q)\NORM{L} + x^* + Nx^*]}\sup_{t\in[0,T]}\sup_{i\in[N]}\NORM{\Delta F^{i,0}_t},\ \forall k\in\N,
\end{equation}
which admits an exponential rate of convergence.
Although the precise convergence rate of FP remains difficult to quantify, numerical experiments show a connection between the graph structure and the convergence rate of FP.
We refer the readers to Section~\ref{sec:num_FP} for a more detailed discussion. 
\label{rem:rate_graph}
\end{rem}

\medskip

Despite the existence of more efficient numerical algorithms for solving the Nash equilibrium in the LQ case, FP provides a novel approach that explicitly constructs a solution to the coupled Riccati system~\eqref{eqn:Riccati} and establishes its existence interval, as shown in Corollary~\ref{cor:wellposed_FP}.

\begin{cor}
    \label{cor:wellposed_FP}
    If condition~\eqref{eqn:FP_final_condition} holds, then the coupled Riccati system~\eqref{eqn:Riccati} admits a unique solution on \([0,T]\).
\end{cor}
\begin{proof}
    Fictitious play provides an explicit construction of the solution, as demonstrated in the proof of Theorem~\ref{thm:conv_FP}. 
    By \cite[Theorem~2.3 and Section~2.4]{lukes1971equilibrium}, the solution to the coupled Riccati system~\eqref{eqn:Riccati} is unique whenever it exists.
    Combining these results concludes the proof.
\end{proof}

To the best of our knowledge, the most general existence interval for the solution to the coupled Riccati system~\eqref{eqn:Riccati} is given in \cite{papavassilopoulos1979existence}, where the range of $T$ explicitly depends on the number of players \(N\). In contrast, our existence condition~\eqref{eqn:FP_final_condition} holds for any \(N\in\N\), even when \(N\) is large.
This highlights the advantages of our approach, which establishes the local well-posedness of coupled Riccati systems through the lens of fictitious play.


\section{Semi-Explicit Equilibrium under Vertex-Transitive Graph}\label{sec:transitive}
In this section, we focus on the game on a vertex-transitive graph \(G\), and aim to provide more explicit characterizations for the value functions $v^i$ and equilibrium strategies $\hat\alpha^i$, compared to the general case characterized by the coupled matrix-valued Riccati equations \eqref{eqn:Riccati}--\eqref{eqn:NE_strategy}. Vertex-transitive graphs, which are defined in Definition~\ref{defn:transitive}, have nice symmetry properties, and play a key role in our analysis. 

\begin{defn}
 A graph \(G\) is vertex-transitive if \(\forall v_i,v_j\in V\), there exists \(\varphi \in \text{Aut}(G)\) such that \(\varphi(v_i) = v_j\). Here \(\text{Aut}(G)\) denotes the set of automorphisms of the graph $G$: 
\begin{equation}
     \text{Aut}(G)\hspace{-2pt}:=\hspace{-2pt}\{\text{bijection } \phi: V\to V \vert (u,v)\in E \text{ if and only if } (\phi(u),\phi(v))\in E, \  \forall u,v\in V \}.
 \end{equation}  \label{defn:transitive}
\end{defn}

In the definition of \(\text{Aut}(G)\), a graph automorphism \(\phi\) should be understood as a bijective relabeling of graph vertices that preserves edge connections.
Algebraically, \(\text{Aut}(G)\) has a group structure under the operation of function composition, which induces a natural group action on the vertex set \(V\) via \(\text{Aut}(G)\times V\ni(\phi,v)\mapsto \phi(v)\in V\).
From an algebraic perspective, a graph is called vertex-transitive if such group action on the vertex set is transitive, i.e., all graph vertices are indistinguishable and structurally equivalent.

The strong symmetry of vertex-transitive graphs facilitates theoretical analysis and makes them appealing for practical applications.
For example, in terms of graph homomorphisms, the graph cores of vertex-transitive graphs have special structures \cite{godsil2001algebraic}.
In terms of spectral properties, tighter spectral bounds can be established for vertex-transitive graphs \cite{chung1997spectral}.
On the application side, vertex-transitive graphs are crucial in communication network designs \cite{tang1992vertex}.

Since a vertex-transitive graph must be a regular graph, Remark~\ref{rem:model} implies that for player \(i\), the mean reversion level in the model dynamics is the arithmetic average of the states of all players in the neighborhood of \(v_i\). As seen below,  the symmetry of the graph allows for the possibility of deriving a semi-explicit form of the Markovian NE. The main result, Theorem~\ref{thm:NE_closed_form}, is presented in Section~\ref{sec:mainresult}, followed by discussions and generalizations. Its constructions and proofs are provided in later sections. To proceed, we first need the following definitions.

Let \(\X\subset \mathbb{S}^{N\times N}\) be a subset of all symmetric $N\times N $ matrices, such that
\begin{equation}
    \X := \SET{X:X \geq 0\text{ is a polynomial of } L},
    \label{eqn:solution_space}
\end{equation}
where the inequality between symmetric matrices is defined in terms of the semi-positive-definite sense. That is,  \(A\geq B\) if and only if \(A-B\) is semi-positive-definite, for \(A,B\in\mathbb{S}^{N\times N}\).

\subsection{Main results}\label{sec:mainresult}

\begin{thm}
\label{thm:NE_closed_form}
Let \(G\) be a simple connected vertex-transitive graph with undirected edges.
Assume \(q^2 = \EPS\) in problem~\eqref{eqn:state_dynamics}--\eqref{def:J}. Then for any \( T>0\), the solution to the following ODE, denoted as \(R:[0,T]\to \X\), exists and is unique.
\begin{equation}
    R'(t) = \frac{1}{c}\Tr \left[Q'(R(t)) e^{-t(a+q)L}\right]e^{-t(a+q)L},\quad R(0) = 0,
    \label{eqn:second_charac}
\end{equation}
where the function \(Q:\X\to \R\) is defined as
\begin{equation}
    Q(X) := \left[\det\left(I+cXL\right)\right]^{\frac{1}{N}}.
    \label{eqn:Q}
\end{equation} 
The solution $F^i$ to the Riccati system~\eqref{eqn:Riccati} is given by:
\begin{equation}
    F^i_t = \frac{1}{\frac{\Tr(P_t) - (a+q)\Tr(L)}{N}}[P_t - (a+q)L]e_ie_i\transpose [P_t - (a+q)L],
    \label{eqn:closed_form_F}
\end{equation}
where  \(P\) is constructed in terms of \(R\) that
\begin{equation}
    P_t = (a+q)L + R'(T-t)cL[I + R(T-t)cL]^{-1}.
    \label{eqn:R_to_P}
\end{equation}
The Markovian NE for player \(i\) is given by
\begin{equation}
    \hat{\alpha}^i(t,x) = -qe_i\transpose L x - e_i\transpose F^i_t x.
    \label{eqn:transitive_NE}
\end{equation}
\end{thm}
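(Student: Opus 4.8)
The plan is to use the symmetry of the vertex-transitive graph to collapse the coupled matrix Riccati system~\eqref{eqn:Riccati} onto the commutative algebra of polynomials of $L$, and then to transfer all of the analytic difficulty onto the single ODE~\eqref{eqn:second_charac} for $R$.

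The one structural fact I would record first is that for a vertex-transitive graph the permutation matrix of every automorphism commutes with $L$, hence with every element of $\X$; since $\mathrm{Aut}(G)$ acts transitively on the vertices, averaging over the group forces every polynomial of $L$ to have constant diagonal, i.e. $e_i\transpose X e_i = \Tr(X)/N$ for all $i\in[N]$ and all $X\in\X$. Writing $M_t := P_t-(a+q)L$, so that the claimed form~\eqref{eqn:closed_form_F} reads $F^i_t=\frac{N}{\Tr(M_t)}M_t e_ie_i\transpose M_t$ with $M_t\in\X$, this constant-diagonal fact yields the three identities $F^i_t e_i = M_t e_i$, $\sum_{k}F^k_t e_ke_k\transpose = M_t$, and $F^i_t e_ie_i\transpose F^i_t = \tfrac{\Tr(M_t)}{N}F^i_t$. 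Plugging~\eqref{eqn:closed_form_F} into~\eqref{eqn:Riccati} with $\EPS=q^2$ (which kills the term $(\EPS-q^2)Le_ie_i\transpose L$), and using $\sum_k e_ke_k\transpose=I$ together with these identities, every cross term and the quadratic term collapse, and the whole coupled system reduces to the single linear-in-$F^i$ equation $\dot F^i_t = P_t F^i_t + F^i_t P_t - \tfrac{\Tr(M_t)}{N}F^i_t$. Because $P_t,M_t$ are polynomials of $L$ they commute, so this self-consistently closes into a scalar-coefficient ODE for $M_t$ within $\X$.

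Next I would pass to $R$. Set $s=T-t$ and use~\eqref{eqn:R_to_P}, which reads $M_{T-s}=cL R'(s)\,[I+cL R(s)]^{-1}$; recognizing this as the standard linearizing change of variables for matrix Riccati equations, I would introduce $\Xi(s):=I+cL R(s)$ so that $M_{T-s}=\Xi(s)^{-1}\Xi'(s)=\frac{d}{ds}\log\Xi(s)$ in the commutative algebra. Computing the gradient of the determinant functional gives $Q'(X)=\frac{c}{N}Q(X)\,L(I+cXL)^{-1}$, and substituting this into the $M_t$-ODE turns it, after the time reversal, into exactly~\eqref{eqn:second_charac}, with the residual scalar nonlinearity carried by the factor $\Tr[Q'(R)e^{-t(a+q)L}]$. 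The initial condition $R(0)=0$ matches the terminal condition $F^i_T=cLe_ie_i\transpose L$: one checks $Q(0)=1$, hence $Q'(0)=\frac cN L$, $R'(0)=I$, $M_T=cL$, and therefore $F^i_T=cLe_ie_i\transpose L$. This reduces the construction of the equilibrium to solving~\eqref{eqn:second_charac}.

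The core of the theorem, and the step I expect to be hardest, is the global existence and uniqueness of $R:[0,T]\to\X$. I would diagonalize $L=\sum_m\mu_m\Pi_m$ and write $R(t)=\sum_m r_m(t)\Pi_m$, turning~\eqref{eqn:second_charac} into a finite coupled system for the scalars $r_m$. Since $L\ge0$ and $R\ge0$ commute, $I+cRL\ge I$ is invertible, so the vector field is real-analytic on $\SET{R\ge0}$ and Picard--Lindel\"of gives local existence and uniqueness. It then remains to (a) keep the solution in $\X$ and (b) exclude blow-up on $[0,T]$. For (a), I would observe that $R'(t)=\phi(t)\,e^{-t(a+q)L}$ with scalar $\phi(t)=\frac1c\Tr[Q'(R)e^{-t(a+q)L}]>0$, so $R$ is nondecreasing from $0$, stays positive semidefinite, and keeps $I+cRL$ invertible. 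For (b), I would exploit the determinant structure: $Q(R(t))$ is nondecreasing because $\frac{d}{dt}Q(R)=\frac1c\big(\Tr[Q'(R)e^{-t(a+q)L}]\big)^2\ge0$, while the bound $\Tr[L(I+cRL)^{-1}e^{-t(a+q)L}]\le\Tr(L)=N$ controls $\phi$, hence $\NORM{R(t)}$, on any finite interval, ruling out finite-time blow-up. These a priori estimates extend the local solution to a unique $R$ on $[0,T]$; this a priori control against the determinant-driven nonlinearity is the delicate part of the argument. Finally, given this unique $R$, defining $P_t,M_t,F^i_t$ through~\eqref{eqn:R_to_P}--\eqref{eqn:closed_form_F} yields a solution of~\eqref{eqn:Riccati} by the previous two steps, and~\eqref{eqn:transitive_NE} follows from~\eqref{eqn:NE_strategy}.
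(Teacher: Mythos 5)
Your symmetry reduction is sound: on a vertex-transitive graph the constant-diagonal property of polynomials of $L$ does yield the three identities $F^i_te_i=M_te_i$, $\sum_kF^k_te_ke_k\transpose=M_t$, and $F^i_te_ie_i\transpose F^i_t=\tfrac{\Tr(M_t)}{N}F^i_t$, and with $\EPS=q^2$ these collapse the coupled system~\eqref{eqn:Riccati} to $\dot F^i_t=P_tF^i_t+F^i_tP_t-\tfrac{\Tr(M_t)}{N}F^i_t$. This is exactly the reduction the paper performs in Steps 1--2 of Section~\ref{subsec:verification} (equations~\eqref{eqn:validity_P} and~\eqref{eqn:check_eqn_tau}), just written algebraically instead of spectrally. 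Your transfer to $R$ via $\Xi(s)=I+cLR(s)$, $M_{T-s}=\Xi(s)^{-1}\Xi'(s)$, together with $Q'(X)=\tfrac{c}{N}Q(X)L(I+cXL)^{-1}$, is also workable: the identity that makes the substitution close, $\tfrac{d}{ds}Q(R(s))=\tfrac1c\bigl(\Tr[Q'(R(s))e^{-s(a+q)L}]\bigr)^2$, is precisely the paper's connection~\eqref{eqn:connection} between~\eqref{eqn:second_charac} and the auxiliary system~\eqref{eqn:first_charac}, so this half of your plan is a legitimate, if under-detailed, repackaging of the paper's verification.

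The genuine gap is in your global well-posedness argument, which is the core of the theorem (``for any $T>0$''). Write $\phi(t):=\tfrac1c\Tr[Q'(R(t))e^{-t(a+q)L}]$, so that $R'(t)=\phi(t)e^{-t(a+q)L}$. Your two ingredients give only: (i) $Q(R(t))$ is nondecreasing, and (ii) $\phi(t)\le\tfrac{Q(R(t))}{N}\Tr[L(I+cR(t)L)^{-1}e^{-t(a+q)L}]\le Q(R(t))$. Monotonicity bounds $Q(R(t))$ from \emph{below}, not above, so (i)+(ii) do not control $\phi$: the factor $Q(R(t))$ in (ii) is exactly the quantity that could grow. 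The best differential inequality they yield is $\tfrac{d}{dt}Q(R(t))=c\phi(t)^2\le c\,Q(R(t))^2$ with $Q(R(0))=1$, whose comparison solution $Q(R(t))\le(1-ct)^{-1}$ is only finite for $t<1/c$. Hence your scheme proves existence on $[0,T]$ only under the smallness restriction $T<1/c$, which the theorem is designed to avoid. The missing idea is the paper's concavity argument (Theorem~\ref{thm:global_exist_unique}): working in the common eigenbasis one computes $\tfrac{d^2}{dt^2}Q(R(t))\le 0$ via the Cauchy--Schwarz (power-mean) inequality~\eqref{eqn:Q_concave}, whence the \emph{linear} bound $Q(R(t))\le 1+ct$; combined with $\Tr(L)=N$ this gives $r_k'(t)\le 1+ct$ and the a priori bound $r_k(t)\le T+\tfrac{c}{2}T^2$ on all of $[0,T]$, which is what allows the local Picard--Lindel\"of solution to be continued to an arbitrary horizon. (An alternative route would be to use that all eigenvalues $r_j$ are built from the same scalar $\int_0^t\phi$ up to factors $e^{-s(a+q)\lambda_j}\in[e^{-2T(a+q)},1]$, so each $1+c\lambda_jr_j$ is comparable to $1+c\lambda_j\int_0^t\phi$ and the ratios $Q(R)/(1+c\lambda_jr_j)$ stay bounded on $[0,T]$; but some such estimate killing the superlinear growth must be supplied, and your proposal contains none.)
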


\begin{proof}

We first show that $R$ takes values in $\X$. From equation~\eqref{eqn:second_charac}, it is clear that $R$ can be written as
    \begin{equation}
        R(t) = \int_0^t C(s)e^{-s(a+q)L}\,\ud s,
    \end{equation}
where \(C:[0,t]\to\R_+\) is a function containing model parameters. Since \(L\in\mathbb{S}^{N\times N}\), it has a characteristic polynomial \(\Lambda\) with \(\deg\Lambda = N\).
    By the Cayley–Hamilton theorem \cite{lax2007linear}, \(\Lambda(L) = 0\).
    The matrix exponential \(e^{-s(a+q)L} := f_s(L)\) is an analytic function of \(L\), where \(f_s(x) = \sum_{k=0}^\infty \frac{[-s(a+q)]^k}{k!}x^k\).
    There exists polynomials \(M_s\) and \(N_s\), with \(\deg N_s\leq N-1\), such that \(f_s(x) = \Lambda(x)M_s(x) + N_s(x)\). Let $N_s(x)$ be $N_s(x) = \sum_{k=0}^{N-1}a_k(s)x^k$, this results in \(e^{-s(a+q)L} = f_s(L) = N_s(L) = \sum_{k=0}^{N-1} a_k(s) L^k\), \(\forall s\in[0,t]\), which leads to
    \begin{equation}
        R(t) = \sum_{k=0}^{N-1}\left(\int_0^t C(s) a_k(s) \,\ud s\right) L^k.
    \end{equation}
    Hence it is proved that \(R\) must take values in \(\X\). The Markovian NE in equation~\eqref{eqn:transitive_NE} follows directly from equation~\eqref{eqn:NE_strategy}.

Then, it suffices to establish the following: 
(i) The existence and uniqueness of solutions to \eqref{eqn:second_charac}, which will be discussed in Section \ref{subsec:exist_unique}.
(ii) The function \(F^i\) as constructed in equation~\eqref{eqn:closed_form_F} is a solution to the Riccati system~\eqref{eqn:Riccati}. This will be addressed in Section~\ref{subsec:verification}. 
\end{proof}

Equations~\eqref{eqn:second_charac}--\eqref{eqn:R_to_P} provide an explicit construction of a global solution to the Riccati system~\eqref{eqn:Riccati}.
By \cite[Theorem~2.3 and Section~2.4]{lukes1971equilibrium}, the solution to the coupled system~\eqref{eqn:Riccati} is unique whenever it exists.
Combining both results establishes the global well-posedness of system~\eqref{eqn:Riccati} when \(G\) is vertex-transitive and \(\EPS = q^2\). 
This nontrivial result is presented in Corollary~\ref{cor:transitive_Riccati}.

\begin{cor}
    \label{cor:transitive_Riccati}
    Let \(G\) be a simple connected vertex-transitive graph with undirected edges.
    Assume \(q^2 = \EPS\) in problem~\eqref{eqn:state_dynamics}--\eqref{def:J}. Then for any \( T>0\), the coupled Riccati system~\eqref{eqn:Riccati} admits a unique solution on \([0,T]\), given by equations~\eqref{eqn:second_charac}--\eqref{eqn:R_to_P}.
\end{cor}

Compared to Corollary~\ref{cor:wellposed_FP}, which only provides an existence interval for the solution, Corollary~\ref{cor:transitive_Riccati} provides a much stronger result, thanks to the adequate graph symmetry and the uncoupled structure of model parameters (see Remark~\ref{rem:q^2=eps}).

We note that the existence of a global solution to the coupled Riccati system~\eqref{eqn:Riccati} has not yet been fully characterized in the existing literature. While some studies have approached the solvability of coupled Riccati equations from a geometric perspective \cite{papavassilopoulos1984linear}, global existence for any \(T>0\) and \(N\geq 2\) is only guaranteed when all coefficients are diagonal matrices (see \cite[part~(iii) of Remark~6.6.3]{abou2012matrix}), which means that system~\eqref{eqn:Riccati} requires a trivial diagonal graph Laplacian $L$.
By exploiting symmetry and decoupling the equations within the Riccati system~\eqref{eqn:Riccati}, Corollary~\ref{cor:transitive_Riccati} greatly improves existing global well-posedness results.

\medskip

\begin{rem}[The law of the equilibrium state process]
    Plugging the equilibrium strategy~\eqref{eqn:transitive_NE} into the state dynamics~\eqref{eqn:state_dynamics} yields the dynamics of the equilibrium state process \(\hat{X}_t := [\hat{X}_t^1, \ldots, \hat{X}_t^N]\transpose\):
    \begin{equation}
        \,\ud \hat{X}_t = -P_t\hat{X}_t\,\ud t + \sigma\,\ud W_t,
    \end{equation}
    where \(P_t\) takes the form
    $$ P_t = \sum_{k=1}^N \left[(a+q)L + F^k_t\right]e_ke_k\transpose = (a+q)L + \sum_{k=1}^N F^k_te_ke_k\transpose,
    $$ and has the representation~\eqref{eqn:R_to_P} given by Theorem~\ref{thm:NE_closed_form}.
    Given the initial condition \(\hat{X}_0 = X_0\), this Ornstein-Uhlenbeck dynamics has a closed-form solution:
    \begin{equation}
        \hat{X}_t = e^{-\int_0^t P_s\,\ud s}X_0 + \sigma \int_0^t e^{-\int_s^t P_u\,\ud u}\,\ud W_s,
    \end{equation}
    where we have used the fact that \(P_s\) and \(P_t\) commute for any \(t,s\in[0,T]\). 
    Therefore, \(\hat{X}_t\) is Gaussian distributed with mean \(e^{-\int_0^t P_s\,\ud s}X_0\) and variance \(\sigma^2 \int_0^t e^{-2\int_s^t P_u\,\ud u}\,\ud s\).
    Using equations~\eqref{eqn:calc_rho} and~\eqref{eqn:def_R} in Appendix~\ref{sec:heuristic} below, which can be verified to be correct, the mean and variance of \(\hat{X}_t\) can be represented in terms of \(R\), the solution to equation~\eqref{eqn:second_charac}.
\end{rem}

\medskip

We note the following, (i) Theorem~\ref{thm:NE_closed_form} applies specifically to the case where \(q^2 = \EPS\).
    As will discussed in Remark~\ref{rem:q^2=eps}, this condition results in an uncoupled nature of the underlying linear system, greatly simplifying the calculations.
(ii) A numerical verification of Theorem~\ref{thm:NE_closed_form} will be presented in Section~\ref{sec:num_veri}.
(iii) Theorem~\ref{thm:NE_closed_form} can be generalized to provide a semi-explicit Markovian NE for games where the mean reversion level is the average of states within a more broadly defined neighborhood of a player, such as the $l$-neighborhood (see Section~\ref{sec:l_nbhd}). This extends the concept of a graph neighborhood as introduced in  Definition~\ref{defn:graph}. The generalized result is briefly outlined below.

\subsubsection{Generalizations to $l-$neighborhood interactions}\label{sec:l_nbhd}
\begin{defn}
    The \(l\)-neighborhood of a vertex \(v\) in a graph \(G\), denoted \(N_G^l(v)\), is the collection of all vertices that are exactly \(l\) steps away from \(v\), i.e.,
    \begin{equation}
        N_G^l(v) = \SET{u\in V: \text{there exists a path of length} \ l\ \text{between}\ u\ \text{and}\ v}. 
    \end{equation} 
    \label{defn:l_nbhd}
\end{defn}

To set the mean reversion level as the average within the $l$-neighborhood, we consider the following generalized state dynamics on a vertex-transitive graph \(G\):
\begin{equation}
    \ud X^i_t = \left[a\left(\sum_{j:v_j\in N_G^l(v_i)}\frac{n(v_j,v_i;l)}{|N_G^l(v_i)|}X^j_t - X^i_t\right) + \alpha^i_t\right]\,\ud t + \sigma\,\ud W^i_t, \quad \forall i\in[N],
    \label{eqn:generailzed_state_dyn}
\end{equation}
where \(n(v_j,v_i;l)\) stands for the number of paths of length \(l\) between \(v_i\) and \(v_j\).
Similarly, the running and terminal costs of player \(i\in[N]\) are set as
\begin{align}    \label{eqn:generalized_running_cost}
    f^i(t,x,\alpha) &= \frac{1}{2}(\alpha)^2 - q\alpha \Big(\sum_{j:v_j\in N_G^l(v_i)}\frac{n(v_j,v_i;l)}{|N_G^l(v_i)|}x^j - x^i\Big) \\
    & \qquad + \frac{\EPS}{2}\Big(\sum_{j:v_j\in N_G^l(v_i)}\frac{n(v_j,v_i;l)}{|N_G^l(v_i)|}x^j - x^i\Big)^2,\\
    g^i(x) &= \frac{c}{2}\Big(\sum_{j:v_j\in N_G^l(v_i)}\frac{n(v_j,v_i;l)}{|N_G^l(v_i)|}x^j - x^i\Big)^2.
    \label{eqn:generalized_terminal_cost}
\end{align}
In the generalized state dynamics \eqref{eqn:generailzed_state_dyn}, the mean reversion level is interpreted as a weighted average of all the players' states within \(N^l_G(v_i)\).
Each vertex \(v_j\in N^l_G(v_i)\) is assigned a weight of \(n(v_j,v_i;l)/|N_G^l(v_i)|\), which is proportional to the number of paths of length \(l\) between \(v_i\) and \(v_j\).

To express the mean reversion level in a matrix form, as done in equation~\eqref{eqn:mean_rev_graph_lap},
we define \(A\) as the adjacency matrix of \(G\), and \(\delta\) as the common degree shared by all vertices in \(G\).
Since the graph is vertex-transitive, the cardinality of the \(l\)-neighborhood can be  calculated as follows: 
\begin{equation}
    |N^l_G(v_i)| = \sum_{j=1}^N (A^l)_{ij} = \sum_{k=1}^N(A^{l-1})_{ik}\sum_{j=1}^NA_{kj} = \delta\sum_{k=1}^N(A^{l-1})_{ik} = \cdots = \delta^l,
\end{equation}
for any \(v_i\in V\).
Due to the properties of the adjacency matrix, \(n(v_j,v_i;l) = e_i\transpose A^l e_j\), allowing us to rewrite the mean reversion level as \(\sum_{j:v_j\in N_G^l(v_i)}\frac{n(v_j,v_i;l)}{|N_G^l(v_i)|}X^j_t = e_i\transpose (A^l/\delta^l) X_t\).
According to Definition~\ref{defn:graph_lap},  \(L = I-\frac{1}{\delta}A\). Thus,
\begin{equation}
    \sum_{j:v_j\in N_G^l(v_i)}\frac{n(v_j,v_i;l)}{|N_G^l(v_i)|}X^j_t - X^i_t = -e_i\transpose M(l,L)X_t, \quad \text{ where}\quad  M(l,L) = I - (I-L)^l.
\end{equation}

For the game described above, under the assumption that \(q^2 = \EPS\), the Markovian NE can also be constructed semi-explicitly, similar to Theorem~\ref{thm:NE_closed_form}. The only modification needed is replacing the graph Laplacian \(L\) in Theorem~\ref{thm:NE_closed_form} with \(M(l,L)\).

Another game with favorable results occurs when the mean reversion level is a weighted average of players' states that are at most $l$ steps away, i.e., averaging among  \(v_j \in \bigcup_{k=1}^l N^k_G(v_i)\). In this case, the mean reversion term in the state dynamics and cost functions is:
\begin{equation}\label{eqn:generailzed_state_dyn2}
    \sum_{j:v_j\in \bigcup_{k=1}^lN_G^k(v_i)}\frac{\sum_{k=1}^l w^k n(v_j,v_i;k)}{\sum_{k=1}^lw^k|N_G^k(v_i)|}X^j_t - X^i_t, 
\end{equation}
where each vertex \(v_j\in \bigcup_{k=1}^l N^k_G(v_i)\) is assigned a weight proportional to a discounted sum of the number of paths of length \(k\in\SET{1,\ldots,l}\) between \(v_i\) and \(v_j\).
Each path of length \(k\) is discounted by a factor of \(w^k\), where $w$ denotes the discount rate.

The expression in equation~\eqref{eqn:generailzed_state_dyn2} can be written in matrix form $-e_i\transpose S(l,L)X_t$, where
\begin{equation}
  S(l,L) = I -  \frac{1-w\delta}{1-(w\delta)^l}(I-L)[I-(w\delta)^l(I-L)^l][I-w\delta(I-L)]^{-1}.
\end{equation}
Then, under the assumption that \(q^2 = \EPS\) and \(w\in(0,1/\delta)\), a semi-explicit Makovian NE can be constructed similarly to  Theorem~\ref{thm:NE_closed_form}, with the graph Laplacian $L$ replaced by $S(l, L)$. 
    \label{rem:l_nbhd}

\subsubsection{Proof roadmap}\label{sec:roadmap}
This subsection briefly outlines the key steps leading to formulation \eqref{eqn:closed_form_F}, with further details provided in Appendix~\ref{sec:heuristic}.

The construction of  \eqref{eqn:closed_form_F}, together with \eqref{eqn:second_charac}--\eqref{eqn:Q} and \eqref{eqn:R_to_P}, is inspired by a previous work \cite{lacker2022case} and employs two key techniques: a fixed point scheme that helps represent the solution to the Riccati system, and the regular representation of the graph automorphism group that leverages symmetry. 

\smallskip
\noindent{\textbf{Step 1:}}  We first define the quantity  
$$
 P_t := \sum_{k=1}^N \left[(a+q)L + F^k_t\right]e_ke_k\transpose = (a+q)L + \sum_{k=1}^N F^k_te_ke_k\transpose,
$$
which appears in system~\eqref{eqn:Riccati}. Solving for $F^i$ in terms of $P$ yields \eqref{eqn:solution_Riccati_P}.

\smallskip
\noindent{\textbf{Step 2:}} Next, we substitute  equation~\eqref{eqn:solution_Riccati_P} into the above definition of $P$, introduce a new quantity $\eta$ (see equation~\eqref{eqn:eta}), and express $P$ in terms of $\eta$, leading to equation~\eqref{eqn:rep_P_use_eta}.

\smallskip
\noindent{\textbf{Step 3}:} Using the definition of $\eta$,  we derive the governing equation:
$$
R'(T-t) = \eta_{t}e^{-(T-t)(a+q)L},
$$
where $R$ solves equation~\eqref{eqn:second_charac}.

\smallskip
\noindent{\textbf{Step 4}:} 
Once $R$ is obtained from the matrix-valued ODE~\eqref{eqn:second_charac}, and $\eta$ is constructed, we can represent $P_t$ in terms of $R$, rather than in terms of $\eta$, leading to \eqref{eqn:R_to_P}. Finally, combining equations~\eqref{eqn:R_to_P} and \eqref{eqn:solution_Riccati_P} gives \eqref{eqn:closed_form_F}. 

The full details can be found in Appendices~\ref{app:s1}--\ref{sec:close_the_loop}. The following two subsections take \eqref{eqn:closed_form_F} as given and focus on establishing the well-posedness of equation~\ref{eqn:second_charac}, which ensures the existence of $\eta$, and on verifying that equation~\eqref{eqn:closed_form_F} indeed solves the Riccati system~\eqref{eqn:Riccati}.

\subsection{The well-posedness of equation~\texorpdfstring{\eqref{eqn:second_charac}}{} and the existence of \texorpdfstring{$\eta$}{}}
\label{subsec:exist_unique}

Due to the essential dependence of the semi-explicit Markovian NE on $R(t)$, it is both natural and essential to 
establish the existence and uniqueness of solutions to~\eqref{eqn:second_charac}.
We recall readers of equations~\eqref{eqn:solution_space}--\eqref{eqn:Q} as the definitions of \(\X\), \(Q\), and the matrix-valued ODE for \(R(t)\):
\begin{align}
&\X := \SET{X:X \geq 0\text{ is a polynomial in } L}, \tag{\ref{eqn:solution_space}}\label{eqn:solution_space_recall}\\
& R'(t) = \frac{1}{c}\Tr \left[Q'(R(t)) e^{-t(a+q)L}\right]e^{-t(a+q)L},\quad R(0) = 0. \tag{\ref{eqn:second_charac}}\label{eqn:second_charac_recall}\\
& Q(X) := \left[\det\left(I+cXL\right)\right]^{\frac{1}{N}}, \tag{\ref{eqn:Q}}\label{eqn:Q_recall}
\end{align}
The definition of \(\X\) in~\eqref{eqn:solution_space_recall} is proposed such that any two matrices in the set \(\SET{L,X_1,X_2}\) commute, allowing them to be simultaneously diagonalizable for any \(X_1,X_2\in\X\). Next, we establish the well-posedness of equation~\eqref{eqn:second_charac_recall} within the set $\X$. We begin by proving a local result and then extend it using a concavity argument.

\begin{thm}
There exists \(t_0\in\R_+\) and \(x_0\in\R_+\) such that the solution \(R(t)\) to equation~\eqref{eqn:second_charac}
exists and is unique in \(\X\), when \(t\in [0,t_0]\) and \(0\leq R(t)\leq x_0I\).
\label{thm:local_exist_unique}
\end{thm}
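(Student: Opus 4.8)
The plan is to recast \eqref{eqn:second_charac} as a fixed-point problem for an integral operator and apply the Banach fixed-point theorem on a complete metric space of $\X$-valued functions. First I would simplify the right-hand side. By Corollary~\ref{cor:matrix_calc}, $Q'(X) = \frac{c}{N}Q(X)(I+cXL)^{-1}L$, so \eqref{eqn:second_charac} takes the form $R'(t) = \phi(t,R(t))\,e^{-t(a+q)L}$ with the \emph{scalar} coefficient
\begin{equation}
    \phi(t,X) := \frac{1}{N}\,Q(X)\,\Tr\left[(I+cXL)^{-1}L\,e^{-t(a+q)L}\right].
\end{equation}
Integrating, a continuous $R$ solves \eqref{eqn:second_charac} on $[0,t_0]$ if and only if it is a fixed point of $(\mathcal{T}R)(t) := \int_0^t \phi(s,R(s))\,e^{-s(a+q)L}\,\ud s$. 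I would set up $\mathcal{T}$ on the set $\mathcal{K}$ of continuous maps $R:[0,t_0]\to\X$ with $0\leq R(t)\leq x_0 I$, equipped with the sup-norm; this is a closed subset of a Banach space, hence a complete metric space.

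The first task is to verify that $\mathcal{T}$ maps $\mathcal{K}$ into itself. Since $\NORM{L}\leq 2$ and $0\leq X\leq x_0 I$ with $x_0<\frac{1}{4c}$, the matrices $X$ and $L$ commute and are simultaneously diagonalizable, with $cXL$ having eigenvalues in $[0,2cx_0]\subseteq[0,\tfrac12)$; hence $I+cXL$ is positive definite (eigenvalues in $[1,\tfrac32)$), so $Q(X)>0$ and the resolvent in $\phi$ is well-defined with $\NORM{(I+cXL)^{-1}}\leq 1$. The same diagonalization gives $\Tr[(I+cXL)^{-1}L\,e^{-t(a+q)L}] = \sum_k \frac{\lambda_k e^{-t(a+q)\lambda_k}}{1+c\mu_k\lambda_k}\geq 0$, where $\lambda_k\geq 0$ are the eigenvalues of $L$ and $\mu_k\geq 0$ those of $X$, so $\phi\geq 0$ on this region. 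Consequently $(\mathcal{T}R)(t)$ is a polynomial in $L$ (it lies in $\text{span}\{I,L,\dots,L^{N-1}\}$ by Cayley--Hamilton), is positive semidefinite as an integral of the positive-semidefinite integrands $\phi\,e^{-s(a+q)L}$, and satisfies $\NORM{(\mathcal{T}R)(t)}\leq M t_0$, where $M:=\sup|\phi|$ over the region is finite and $\NORM{e^{-s(a+q)L}}\leq 1$. Choosing $t_0$ small enough that $M t_0\leq x_0$, and recalling that the operator norm of a positive-semidefinite matrix equals its largest eigenvalue, yields $0\leq(\mathcal{T}R)(t)\leq x_0 I$, i.e. $\mathcal{T}R\in\mathcal{K}$.

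The second task is the contraction estimate. On $\{0\leq X\leq x_0 I\}\cap\X$, a compact subset of the finite-dimensional space of polynomials in $L$, both $Q$ and $X\mapsto(I+cXL)^{-1}$ are smooth precisely because $I+cXL\succeq I$ stays uniformly positive definite there; hence $\phi(t,\cdot)$ is Lipschitz in $X$, uniformly in $t\in[0,t_0]$, with some constant $K$. Combined with $\NORM{e^{-s(a+q)L}}\leq 1$, this gives $\NORM{(\mathcal{T}R_1)(t)-(\mathcal{T}R_2)(t)}\leq K\int_0^t\NORM{R_1(s)-R_2(s)}\,\ud s\leq K t_0\sup_{[0,t_0]}\NORM{R_1-R_2}$. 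Shrinking $t_0$ further so that $K t_0<1$ makes $\mathcal{T}$ a contraction on $\mathcal{K}$, and the Banach fixed-point theorem produces a unique fixed point in $\mathcal{K}$, which is the desired solution $R\in\X$, unique among $\X$-valued solutions bounded by $x_0 I$.

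The main obstacle is not the fixed-point machinery but keeping the iterates inside $\X$ while simultaneously controlling the resolvent $(I+cXL)^{-1}$: the bound $x_0<\frac{1}{4c}$ together with $\NORM{L}\leq 2$ is exactly what prevents $I+cXL$ from degenerating, and it is what simultaneously forces $\phi\geq 0$ (used for invariance of both the positive-semidefinite cone and $\X$) and makes $\phi$ Lipschitz (used for the contraction). If one wished to avoid shrinking $t_0$ for the contraction step, the weighted-norm device of Lemma~\ref{lemma:unif_conv} applies verbatim, but for this local statement the plain smallness of $t_0$ suffices.
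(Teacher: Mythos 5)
Your proposal is correct and follows essentially the same route as the paper: the paper also proves this statement by a Picard--Lindel\"of/contraction argument for the integral form of \eqref{eqn:second_charac}, using Corollary~\ref{cor:matrix_calc} to rewrite $Q'$, the Cayley--Hamilton argument to show that $\X$ is preserved by the Picard iterates, and the constraints $x_0<\frac{1}{4c}$, $\NORM{L}\leq 2$ to keep $(I+cXL)^{-1}$ under control. The only substantive difference is bookkeeping: where you obtain the Lipschitz property of the right-hand side by a soft smoothness-on-a-compact-convex-set argument (and explicitly verify invariance of the order interval $[0,x_0 I]$ via $\phi\geq 0$ and $Mt_0\leq x_0$), the paper instead computes an explicit Lipschitz constant $32c(1+2cx_0)$ through Cauchy--Schwarz and spectral estimates and then invokes the Picard--Lindel\"of theorem as a black box.
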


\begin{proof}

Define the function \(f:[0,T]\times \X\to \X\) such that
\begin{equation}
    f(t,X) = \frac{1}{c}\Tr \left[Q'(X) e^{-t(a+q)L}\right]e^{-t(a+q)L},
    \label{eqn:f_ODE_proof}
\end{equation}
with $Q(X)$ defined in \eqref{eqn:Q_recall}. 
Then equation~\eqref{eqn:second_charac} can be rewritten as \(R'(t) = f(t,R(t))\) with the initial condition \(R(0) = 0\). In the following, we use the Picard-Lindelöf theorem \cite{teschl2012ordinary} to prove the local existence and uniqueness of the solution.

We first check that  \(\X\) is closed under the Picard iteration. Given the $k^{th}$ function $R_k(t)$, the $(k+1)^{th}$ iteration is given by:
\begin{equation}
    R_{k+1}(t) = \int_0^t \frac{1}{c}\Tr \left[Q'(R_k(s)) e^{-s(a+q)L}\right]e^{-s(a+q)L}\,\ud s.
\end{equation}
Following the argument in the first part of the proof of Theorem~\ref{thm:NE_closed_form}, if \(R_k\) takes values in \(\X\), then \(R_{k+1}\) also takes values in \(\X\).

We proceed to prove that \(f\) is continuous in \(t\) and Lipschitz continuous in \(X\) for all \(t\in [0, t_0]\) and \(X\in\X\) such that \(X\leq x_0 I\).
Since the continuity in \(t\) is straightforward, our focus will be on proving the Lipschitz continuity in \(X\).
Given that all eigenvalues of \(L\) lie in the interval \([0,2]\), it's clear that \(\NORM{e^{-t(a+q)L}}\leq 1\).
Combining it with the Cauchy-Schwarz inequality under the matrix inner product \(\langle A,B\rangle := \Tr(A\transpose B)\) yields
\begin{equation}
    \NORM{f(t,X_2)-f(t,X_1)}\leq\frac{1}{c} \sqrt{\Tr\left(e^{-2t(a+q)L}\right)}\sqrt{\Tr\left[\left(Q'(X_2)-Q'(X_1)\right)^2\right]}.
    \label{eqn:bound_two_trace}
\end{equation}
The first term on the right-hand side of the above inequality has a trivial bound:
\begin{equation}
    \sqrt{\Tr\left(e^{-2t(a+q)L}\right)} 
    = \sqrt{\sum_{k=1}^N e^{-2t(a+q)\lambda_k}}\leq \sqrt{N}.
    \label{eqn:bound_first_trace}
\end{equation}
Therefore, we focus on bounding the second term on the right-hand side of inequality~\eqref{eqn:bound_two_trace}.

Denote the eigenvalues of \(X\) as \(x_1,\cdots,x_N\), it's clear that  $0\leq x_i\leq x_0$, for any $i\in[N]$.
Since \(L\) and \(\forall X\in\X\) are simultaneously diagonalizable, an upper bound for \(Q(X)\) can be established:
\begin{equation}
    Q(X) = \left[\prod_{k=1}^N(1+cx_k\lambda_k)\right]^{\frac{1}{N}}\leq 1+2cx_0.
    \label{eqn:UB_for_Q}
\end{equation}
Using Corollary~\ref{cor:matrix_calc} for \(Q(X)\) and combining it with the triangle inequality gives
\begin{multline}
   \hspace{-7pt} \Tr\left[\left(Q'(X_2)-Q'(X_1)\right)^2\right]
    \leq \frac{2c^2}{N^2}\Tr\left(L^2\left[Q(X_2)(I+cX_2L)^{-1} \hspace{-7pt} - Q(X_2)(I+cX_1L)^{-1}\right]^2\right) \\
    +\frac{2c^2}{N^2}\Tr\left(L^2\left[Q(X_2)(I+cX_1L)^{-1}- Q(X_1)(I+cX_1L)^{-1}\right]^2\right).
    \label{eqn:calc_triangle}
\end{multline}
Now, it remains to bound the two trace terms on the right-hand side of inequality~\eqref{eqn:calc_triangle}.

Denote by \(x_{j,1},\cdots,x_{j,N}\) the eigenvalues of \(X_j\) where \(j\in\{1,2\}\).
Notice that any two matrices in the set \(\{X_1,X_2,L\}\) are simultaneously diagonalizable for any \(X_1,X_2\in\X\).
 This yields
\begin{multline}
    \Tr\left[\left(Q'(X_2)-Q'(X_1)\right)^2\right]
    \leq \frac{8c^2}{N^2}\sum_{k=1}^N\left[Q(X_2)\frac{1}{1+cx_{2,k}\lambda_k} - Q(X_2)\frac{1}{1+cx_{1,k}\lambda_k}\right]^2 \\
    + \frac{8c^2}{N^2}\sum_{k=1}^N\left[Q(X_2)\frac{1}{1+cx_{1,k}\lambda_k}- Q(X_1)\frac{1}{1+cx_{1,k}\lambda_k}\right]^2.
    \label{eqn:two_sum_spectrum}
\end{multline}
We next bound the two summations on the right-hand side of inequality~\eqref{eqn:two_sum_spectrum}, respectively.

For the first summation in inequality~\eqref{eqn:two_sum_spectrum}, using equation~\eqref{eqn:UB_for_Q} together with the facts that \(1+cx_{1,k}\lambda_k\geq \frac{1}{2}\) and \(\NORM{X_2-X_1} = \max_{k\in[N]}|x_{2,k}-x_{1,k}|\) produces
\begin{align}
    \sum_{k=1}^N &\left[Q(X_2)\frac{1}{1+cx_{2,k}\lambda_k} - Q(X_2)\frac{1}{1+cx_{1,k}\lambda_k}\right]^2 \\
    &\leq \sum_{k=1}^N (1+2cx_0)^2 \left[\frac{c\lambda_k(x_{1,k}-x_{2,k})}{(1+cx_{1,k}\lambda_k)(1+cx_{2,k}\lambda_k)}\right]^2\\
    &\leq 64c^2(1+2cx_0)^2N\NORM{X_2-X_1}^2.
    \label{eqn_first_sum_bound}
\end{align}

For the second summation in inequality~\eqref{eqn:two_sum_spectrum}, using the bound for \(1+cx_{1,k}\lambda_k\) once again yields
\begin{equation}
    \sum_{k=1}^N\left[Q(X_2)\frac{1}{1+cx_{1,k}\lambda_k}- Q(X_1)\frac{1}{1+cx_{1,k}\lambda_k}\right]^2
    \leq 4\sum_{k=1}^N [Q(X_2)-Q(X_1)]^2.
    \label{eqn:eqn_second_term_RHS}
\end{equation}
Applying the intermediate value theorem for each entry of \(Q'(X)\), one obtains the bound:
\begin{equation}
    [Q(X_2)-Q(X_1)]^2\leq \NORM{X_2-X_1}_F^2\ \sup_{\tiny{\mathclap{\substack{0\leq X\leq x_0I\\ X\in\X}}}}\NORM{Q'(X)}_F^2
    \leq N^2\NORM{X_2-X_1}^2\ \sup_{\tiny\mathclap{\substack{0\leq X\leq x_0I\\ X\in\X}}}\NORM{Q'(X)}^2,
    \label{eqn:eqn_with_sup}
\end{equation}
where \(\NORM{\cdot}_F\) denotes the Frobenius norm.
The supremum term in inequality~\eqref{eqn:eqn_with_sup} is bounded by:
\begin{equation}
    \sup_{\tiny\mathclap{\substack{0\leq X\leq x_0I\\ X\in\X}}}\NORM{Q'(X)}^2
    \leq \frac{c}{N}(1+2cx_0)\sup_{\tiny0\leq x_1,\cdots,x_n\leq x_0}\sup_{\tiny k\in [N]}\frac{\lambda_k}{1+cx_k\lambda_k}
    \leq \frac{4c}{N}(1+2cx_0).
    \label{eqn:bound_sup}
\end{equation}
Plugging inequalities~\eqref{eqn:eqn_with_sup}--\eqref{eqn:bound_sup} into inequality~\eqref{eqn:eqn_second_term_RHS} gives
\begin{equation}
    \sum_{k=1}^N\left[Q(X_2)\frac{1}{1+cx_{1,k}\lambda_k}- Q(X_1)\frac{1}{1+cx_{1,k}\lambda_k}\right]^2
    \leq 64c^2N(1+2cx_0)^2\NORM{X_2-X_1}^2.
    \label{eqn:second_term_bound}
\end{equation}
Combining inequalities~\eqref{eqn:bound_two_trace}--\eqref{eqn:bound_first_trace},~\eqref{eqn:two_sum_spectrum}--\eqref{eqn_first_sum_bound} and~\eqref{eqn:second_term_bound} yields
\begin{equation}
    \NORM{f(t,X_2)-f(t,X_1)}
    \leq 32c(1+2cx_0)\NORM{X_2-X_1}.
    \label{eqn:f_Lips}
\end{equation}
Therefore, \(f\) is Lipschitz continuous in \(X\) with Lipschitz constant \(32c(1+2cx_0)\). 
This concludes the proof.
\end{proof}

In general, without the local constraint \(R(t)\leq x_0I\), the function \(f\) defined in \eqref{eqn:f_ODE_proof} fails to be globally Lipschitz.
However, through a concavity argument, we prove that the solution \(R(t)\) has an \textit{a priori} upper bound. This upper bound implies the global existence and uniqueness of the solution, summarized as follows.

\begin{thm}
    For any \(T>0\), the solution \(R(t)\) to equation~\eqref{eqn:second_charac} exists and is unique in \(\X\) on the finite time horizon \(t\in[0,T]\).
\label{thm:global_exist_unique}
\end{thm}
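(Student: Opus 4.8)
The plan is to promote the local existence and uniqueness of Theorem~\ref{thm:local_exist_unique} to the full interval $[0,T]$ by the classical continuation argument for ODEs, the only genuinely new ingredient being an a priori bound on $R$. Writing the equation as $R'(t)=f(t,R(t))$ with $f$ as in \eqref{eqn:f_ODE_proof}, I first note that $f$ is Lipschitz in $X$ on every bounded slab $\{X\in\X:0\le X\le MI\}$: the computation leading to \eqref{eqn:f_Lips} goes through verbatim on the nonnegative cone (there $I+cXL\ge I$ is uniformly invertible, so the factors $(1+cx_k\lambda_k)^{-1}$ are bounded by $1$), producing a Lipschitz constant depending only on $M$. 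Hence Picard--Lindel\"of yields a unique maximal solution on some $[0,t_{\max})$, and by the blow-up alternative it suffices to exhibit a bound $\NORM{R(t)}\le C(T)$ holding on the whole existence interval with $C(T)$ independent of $t_{\max}$; then the solution stays in a fixed compact slab and cannot cease to exist before $T$.

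To produce that bound I would pass to the spectrum. Every $R(t)\in\X$ is a polynomial in $L$, so $L$ and the whole family $\{R(t)\}$ share the fixed orthonormal eigenbasis of $L$; denoting by $\lambda_1,\dots,\lambda_N\in[0,2]$ the eigenvalues of $L$ and by $r_k(t)$ the eigenvalue of $R(t)$ on the same eigenvector, and using Corollary~\ref{cor:matrix_calc} to write $Q'(R(t))=\tfrac{c}{N}\,Q(R(t))\,(I+cR(t)L)^{-1}L$, the matrix ODE \eqref{eqn:second_charac} decouples into
\[
    r_k'(t)=\Psi(t)\,e^{-t(a+q)\lambda_k},\qquad r_k(0)=0,
\]
where the common scalar prefactor $\Psi(t)=\tfrac1c\Tr[Q'(R(t))e^{-t(a+q)L}]$ equals
\[
    \Psi(t)=\frac1N\,Q(R(t))\sum_{l=1}^N\frac{\lambda_l\,e^{-t(a+q)\lambda_l}}{1+c\lambda_l r_l(t)}.
\]
Since every summand is nonnegative as soon as the $r_l$ are, a short bootstrap gives $\Psi\ge0$, so each $r_k$ is nondecreasing and nonnegative; in particular $R(t)\ge0$ remains in $\X$.

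The a priori bound is exactly where concavity enters. From $Q(R(t))=\bigl[\prod_k(1+c\lambda_k r_k(t))\bigr]^{1/N}$, the arithmetic--geometric mean inequality (equivalently, concavity of $X\mapsto(\det X)^{1/N}$ on the positive semidefinite cone precomposed with the affine map $X\mapsto I+cXL$) yields
\[
    Q(R(t))\le\frac1N\sum_{k=1}^N\bigl(1+c\lambda_k r_k(t)\bigr)=1+\frac cN\,\Tr\bigl(LR(t)\bigr).
\]
Put $u(t):=\Tr(LR(t))=\sum_k\lambda_k r_k(t)\ge0$. Since $\tfrac1N\sum_l\lambda_l=\tfrac1N\Tr(L)=1$ and each weight $\tfrac{\lambda_l e^{-t(a+q)\lambda_l}}{1+c\lambda_l r_l}\le\lambda_l$, one gets $\Psi(t)\le Q(R(t))$, and therefore $u'(t)=\Psi(t)\sum_k\lambda_k e^{-t(a+q)\lambda_k}\le N\Psi(t)\le N\,Q(R(t))\le N+c\,u(t)$. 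Gr\"onwall's inequality then gives $u(t)\le\tfrac Nc(e^{ct}-1)$, hence $Q(R(t))\le e^{ct}$, and finally, for every $k$,
\[
    r_k(t)=\int_0^t\Psi(s)\,\ud s\le\int_0^t Q(R(s))\,\ud s\le\frac{e^{cT}-1}{c},
\]
i.e.\ $\NORM{R(t)}\le\tfrac{e^{cT}-1}{c}$ uniformly on the existence interval. Feeding this into the continuation argument of the first paragraph forces $t_{\max}>T$, and concatenating the local uniqueness statements yields uniqueness on $[0,T]$.

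I expect the a priori bound, not the continuation, to be the crux. A direct attack on $Q(R(t))$ is unsatisfactory: Corollary~\ref{cor:matrix_calc} gives $\tfrac{d}{dt}Q(R(t))=c\Psi(t)^2$, and combined with $\Psi\le Q$ this is only the Riccati inequality $Q'\le cQ^2$, which forbids blow-up merely for $t<1/c$. The decisive step is to track $u=\Tr(LR)$ instead and to invoke the concavity/AM--GM bound $Q\le1+\tfrac cN u$: this linearizes the self-limiting feedback---as the $r_k$ grow, the weights $\lambda_l/(1+c\lambda_l r_l)$ shrink---into $u'\le N+cu$, which survives for arbitrarily large $T$. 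Some care is also needed to confirm that the Lipschitz estimate \eqref{eqn:f_Lips}, originally derived for $X\le x_0I$ with $x_0<\tfrac1{4c}$, extends to the larger slab forced by the a priori bound.
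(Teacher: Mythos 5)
Your proposal is correct, and its overall skeleton (local Picard--Lindel\"of result $+$ passage to the common eigenbasis of $L$ $+$ an a priori bound on the eigenvalues $r_k(t)$ $\Rightarrow$ global existence and uniqueness) matches the paper's. The genuine difference is in how the a priori bound is obtained, which you correctly identify as the crux. The paper keeps working with the scalar function $Q_R(t)=Q(R(t))$: it computes $\frac{\ud}{\ud t}Q_R=\frac{c}{N^2}Q_R^2\bigl(\sum_j y_j\bigr)^2$, then differentiates once more and uses Cauchy--Schwarz (power-mean inequality on the $y_j$) plus the sign of the $(a+q)$-term to show $\frac{\ud^2}{\ud t^2}Q_R\le 0$, i.e.\ $Q_R$ is concave in $t$; the tangent-line bound $Q_R(t)\le Q_R(0)+Q_R'(0)\,t=1+ct$ then gives $r_k'(t)\le 1+ct$ and hence the polynomial bound $r_k(t)\le T+\frac{c}{2}T^2$. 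You instead avoid second derivatives entirely: the AM--GM inequality $Q(R)\le 1+\frac{c}{N}\Tr(LR)$ converts the self-limiting feedback into a \emph{linear} differential inequality $u'\le N+cu$ for $u=\Tr(LR)$, and Gr\"onwall yields $r_k(t)\le\frac{e^{cT}-1}{c}$. Both bounds are valid for every finite $T$, which is all the continuation argument needs; the paper's concavity route buys a sharper (quadratic rather than exponential in $T$) estimate, while your route is more elementary and shorter, replacing the somewhat delicate computation \eqref{eqn:calc_Q_second_derivative}--\eqref{eqn:Q_concave} with one application of AM--GM. Your side remark that the naive estimate $\frac{\ud}{\ud t}Q_R=c\Psi^2\le cQ_R^2$ only precludes blow-up for $t<1/c$ is accurate and explains why some additional structural input (concavity in the paper, the trace functional $u$ in your argument) is indispensable. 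Finally, your caveat about the Lipschitz estimate \eqref{eqn:f_Lips} is handled correctly: the restriction $x_0<\frac{1}{4c}$ in Theorem~\ref{thm:local_exist_unique} only serves to keep $1+cx_k\lambda_k\ge\frac12$ on the two-sided slab $-\frac{1}{4c}I\le X\le x_0I$, and on the nonnegative cone one has $1+cx_k\lambda_k\ge 1$, so the same computation gives a finite Lipschitz constant on every slab $0\le X\le MI$ (depending on $M$ through the bound $Q(X)\le 1+2cM$), exactly as the paper implicitly uses when it sets $x_0=T+\frac{c}{2}T^2$.
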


\begin{proof}
    Notice that the proof of Theorem~\ref{thm:local_exist_unique} does not actually rely on \(t_0\).
    Therefore, it suffices to establish an a priori upper bound for \(R(t)\), i.e., finding \(x_0>0\) such that $R(t)\leq x_0I$, for all $t\in[0,T]$.

   Let \(r_1(t),\cdots,r_N(t)\) denote the eigenvalues of \(R(t)\), such that \(r_1'(t),\cdots,r_N'(t)\) represent the eigenvalues of \(R'(t)\).  It's clear that \(r_1(t),\cdots,r_N(t)\) are positive for \(\forall t\in[0,T]\). Since any two matrices in the set \(\{L,R(t),R'(t)\}\) commute for any fixed \(t\in[0,T]\), they can be simultaneously diagonalized. Rewriting equation~\eqref{eqn:second_charac}  using the spectra of matrices, one has
    \begin{equation}
            r_k'(t) = \frac{1}{N}\left[\prod_{j=1}^N(1+c\lambda_jr_j(t))\right]^{\frac{1}{N}}\sum_{j=1}^N \frac{\lambda_je^{-t(a+q)\lambda_j}}{1+c\lambda_jr_j(t)}\cdot e^{-t(a+q)\lambda_k},\quad r_k(0) = 0.
        \label{eqn:second_charac_spectrum}
    \end{equation}
    Define \(y_j(t) := \frac{\lambda_je^{-t(a+q)\lambda_j}}{1+c\lambda_jr_j(t)}\), and \(Q_R(t) := Q(R(t)) = \left[\prod_{j=1}^N(1+c\lambda_jr_j(t))\right]^{\frac{1}{N}}\) to capture the dependence of \(Q(R(t))\) on time \(t\). 
    Taking the logarithm of \(Q_R(t)\) and then differentiating with respect to \(t\) yields:
    \begin{equation}
        \frac{\ud}{\ud t}Q_R(t)
        = Q_R(t) \frac{1}{N} \sum_{j=1}^N \frac{c\lambda_jr_j'(t)}{1+c\lambda_jr_j(t)}.
        \label{eqn:Q_R_derivative_tmp}
    \end{equation}
    Plugging equation~\eqref{eqn:second_charac_spectrum} into the equation above produces
    \begin{equation}
        \frac{\ud}{\ud t}Q_R(t)
        = \frac{Q_R^2(t)}{N^2}\sum_{k=1}^N \frac{\lambda_k e^{-t(a+q)\lambda_k}}{1+c\lambda_kr_k(t)}\sum_{j=1}^N \frac{c\lambda_j e^{-t(a+q)\lambda_j}}{1+c\lambda_jr_j(t)} = \frac{c}{N^2}Q_R^2(t)\left(\sum_{j=1}^N y_j(t)\right)^2.
        \label{eqn:Q_R_derivative}
    \end{equation}
    
    In what follows, we establish the concavity of $Q_R$, which is crucial for achieving global well-posedness. 
    Differentiating equation~\eqref{eqn:Q_R_derivative} with respect to $t$ yields:
    \begin{equation}
        \frac{\ud^2}{\ud t^2}Q_R(t)
        = \frac{2c}{N^2}Q_R(t)\cdot\sum_{j=1}^N y_j(t)\cdot \left[\frac{\ud}{\ud t}Q_R(t)\cdot\sum_{j=1}^N y_j(t) + Q_R(t)\sum_{j=1}^N \frac{\ud}{\ud t}y_j(t)\right].
        \label{eqn:calc_Q_second_derivative}
    \end{equation}
    Using equation~\eqref{eqn:second_charac_spectrum}, the derivative of \(y_j(t)\) is computed as follows:
    \begin{equation}
        \frac{\ud}{\ud t}y_j(t) = -(a+q)\lambda_jy_j(t) - cQ_R(t)y_j^2(t)\frac{1}{N}\sum_{k=1}^N y_k(t).
        \label{eqn:y_derivative}
    \end{equation}
    Plugging equations~\eqref{eqn:Q_R_derivative} and~\eqref{eqn:y_derivative} into equation~\eqref{eqn:calc_Q_second_derivative}, we compute the second-order derivative:
    \begin{multline}
        \frac{\ud^2}{\ud t^2}Q_R(t) 
        = \frac{2c}{N^2}Q_R(t)\sum_{j=1}^N y_j(t)\Biggr[\frac{c}{N^2}Q_R^2(t)\left(\sum_{j=1}^N y_j(t)\right)^3 \\
        - \frac{cQ_R^2(t)}{N}\sum_{j=1}^N y_j^2(t)\sum_{j=1}^N y_j(t) -(a+q)Q_R(t)\sum_{j=1}^N \lambda_jy_j(t)\Biggr].
        \label{eqn:Q_R_second_derivative}
    \end{multline}
    Using the trivial bound \((a+q)Q_R(t)\sum_{j=1}^N \lambda_jy_j(t)\geq 0\) and the Cauchy-Schwarz inequality yields
{\small    \begin{equation}
        \frac{\ud^2}{\ud t^2}Q_R(t)
        \leq \frac{2c^2}{N}Q_R^3(t)\sum_{j=1}^N y_j(t)\left[\left(\frac{1}{N}\sum_{j=1}^N y_j(t)\right)^3 - \left(\frac{1}{N}\sum_{j=1}^N y_j(t)\right)\left(\frac{1}{N}\sum_{j=1}^N y_j^2(t)\right)\right] \leq 0.
        \label{eqn:Q_concave}
    \end{equation}} This proves the concavity of \(Q_R\) in \(t\).

    The concavity of $Q_R$ yields the following estimate:
    \begin{equation}
        Q_R(t) \leq Q_R(0) + Q_R'(0)t,\ \forall T>0,\forall t\in[0,T].
    \end{equation}
    By evaluating $Q_R'(0)$ using equation~\eqref{eqn:Q_R_derivative}, one has $Q_R(t) \leq 1+ct$.
    Since \(y_j(t)\leq \lambda_j\), the inequality, combined with equation~\eqref{eqn:second_charac_spectrum}, implies \(r_k'(t)\leq 1+ct\).
    Integrating both sides w.r.t. \(t\), we obtain an a priori bound \(
        r_k(t) \leq \int_0^t (1+cs)\,\ud s \leq T + \frac{c}{2}T^2\).
    Therefore, setting \(x_0 = T + \frac{c}{2}T^2\) completes the proof.
\end{proof} 

By proving Theorem~\ref{thm:global_exist_unique}, we have established the first part of Theorem~\ref{thm:NE_closed_form}.

\subsection{Verification}\label{subsec:verification}

This section aims to verify that equation~\eqref{eqn:closed_form_F} is indeed the solution to the Riccati system~\eqref{eqn:Riccati}. Although proving the existence and uniqueness of the solution $R(t)$ to the governing equation~\eqref{eqn:second_charac} is convenient, using \eqref{eqn:second_charac} for verification complicates the process. 

To achieve this, we propose an alternative governing system for $\eta$, showing its connection with equation~\eqref{eqn:second_charac}, which allows for an easier verification procedure. We once again emphasize that verifying the assumptions in Remark~\ref{rem:assumptions} is unnecessary since the fact that $F_t^i$ given in \eqref{eqn:closed_form_F} solves \eqref{eqn:Riccati} is directly verified below.

\subsubsection{The alternative governing equation for \texorpdfstring{\(\eta\)}{}}

Consider the coupled ODE system: 
\begin{equation}
    \begin{cases}
        Q_1(t) = \left[\det(I+cLR_1(t))\right]^{\frac{1}{N}}\\
        R_1'(T-t) = \sqrt{\frac{Q_1'(T-t)}{c}}e^{-(T-t)(a+q)L}
    \end{cases},\quad  R_1(0) = 0, \quad R_1'(0) = I,
    \label{eqn:first_charac}
\end{equation}
where the solutions \(Q_1:[0,T]\to \R\) and \(R_1:[0,T]\to \X\) are defined on $[0,T]$. We want to relate the above system to equation~\eqref{eqn:second_charac}. Specifically, we demostrate that any solution \(R\) to equation~\eqref{eqn:second_charac} must also provide a solution \((Q_1,R_1 \equiv R)\) to equation~\eqref{eqn:first_charac}, thus equation~\eqref{eqn:first_charac} offers an alternative characterization for $\eta_t$.

Let \(R(t)\) be a solution to equation~\eqref{eqn:second_charac}, we first check the initial conditions in equation~\eqref{eqn:first_charac}.
By Corollary~\ref{cor:matrix_calc}, \(Q'(0) = \frac{c}{N}L\), thus \(R'(0) = \frac{1}{c}\Tr \left[Q'(0) \right]I = I\).
So it remains to show
\begin{equation}
    \left(\Tr \left[Q'(R(t)) e^{-t(a+q)L}\right]\right)^2  = cQ_1'(t),
    \label{eqn:connection}
\end{equation}
with \(Q_1\) defined in \eqref{eqn:first_charac}.
By Corollary~\ref{cor:matrix_calc} and equation~\eqref{eqn:Q}, one has
\begin{equation}
    \Tr \left(Q'(R(t)) e^{-t(a+q)L}\right)
    = \Tr \left(\frac{1}{N}\left[\det\left(I+cLR(t)\right)\right]^{\frac{1}{N}}(I+cLR(t))^{-1} cL e^{-t(a+q)L}\right).
    \label{eqn:trace_equation}
\end{equation}
On the other hand, a calculation of \(Q_1'(t)\) based on Corollary~\ref{cor:matrix_calc} provides
\begin{equation}
    Q_1'(t) = \Tr \left(\frac{1}{N}\left[\det\left(I+cLR(t)\right)\right]^{\frac{1}{N}}(I+cLR(t))^{-1} cL R'(t)\right).
\end{equation}
Replacing $R'(t)$ in the above equation using equation~\eqref{eqn:second_charac} yields
\begin{multline}
     Q_1'(t) = \frac{1}{c}\Tr \left(Q'(R(t)) e^{-t(a+q)L}\right)\\\Tr \left(\frac{1}{N}\left[\det\left(I+cLR(t)\right)\right]^{\frac{1}{N}}(I+cLR(t))^{-1} cL e^{-t(a+q)L}\right),
\end{multline}
which achieves equation~\eqref{eqn:connection}.

\subsubsection{Verification through the alternative governing equation for \texorpdfstring{\(\eta\)}{}}

To conclude the proof of Theorem~\ref{thm:NE_closed_form}, it remains to prove that  \(F^i_t\) provided by equation~\eqref{eqn:closed_form_F} is a solution to the Riccati system~\eqref{eqn:Riccati}. We first check the terminal condition of the Riccati system~\eqref{eqn:Riccati}.
Since \(R(0) = 0\) and \(R'(0) = I\), equation~\eqref{eqn:R_to_P} implies that \(P_T = (a+q)L+cL\).
Combining this with equation~\eqref{eqn:closed_form_F} yields \(F^i_T = cLe_ie_i\transpose L\). To verify that equation~\eqref{eqn:closed_form_F} follows the Riccati system~\eqref{eqn:Riccati}, we need to account for $P_t$, which depends on $\eta_t$. This is where we will use the alternative governing system~\eqref{eqn:first_charac}. 
The verification process is divided into several steps.

\medskip

\noindent\textbf{Step 1: Rebuild the relationship between \(F^i_t\) and \(P_t\).}
Let us define $\tau(t)$ as follows:
\begin{equation}
    \tau(t) := \frac{\Tr(P_t) - (a+q)\Tr(L)}{N}.
    \label{eqn:tau}
\end{equation}
Based on equation~\eqref{eqn:R_to_P} and Lemma~\ref{lemma:group_rep}(i), \(P_t - (a+q)L\) 
takes values as symmetric matrices and commutes with \(R_\varphi\) for all \( \varphi\in\text{Aut}(G)\) for any \(t\in[0,T]\). Lemma~\ref{lemma:group_rep}(ii) implies that \(e_k\transpose [P_t - (a+q)L]e_k = \tau(t)\) for each \(k\in[N]\). Combining this with equation~\eqref{eqn:closed_form_F} yields
\begin{equation}
    \sum_{k=1}^N F^k_te_ke_k\transpose + (a+q)L 
    = \sum_{k=1}^N [P_t-(a+q)L]e_ke_k\transpose + (a+q)L = P_t.
    \label{eqn:validity_P}
\end{equation}
This affirms the validity of equation~\eqref{eqn:defn_P} within the context of verification, which has been a key element throughout the heuristic construction of the equilibrium.

\medskip

\noindent\textbf{Step 2: The spectral form of the Riccati system~\eqref{eqn:Riccati}.} Using the definition of $\tau(t)$ from \eqref{eqn:tau}, equation~\eqref{eqn:closed_form_F} can be rewritten as \(F^i_t = \frac{1}{\tau(t)}[P_t - (a+q)L]e_ie_i\transpose [P_t - (a+q)L]\).
With this expression regarding the Riccati system~\eqref{eqn:Riccati}, it suffices to verify:
\begin{multline}
    -\frac{\tau'(t)}{\tau(t)}[P_t-(a+q)L]e_ie_i\transpose[P_t-(a+q)L] + \dot{P}_te_ie_i^T[P_t-(a+q)L] + [P_t-(a+q)L]e_ie_i\transpose\dot{P}_t\\
    -P_t[P_t-(a+q)L]e_ie_i\transpose[P_t-(a+q)L] - [P_t-(a+q)L]e_ie_i\transpose[P_t-(a+q)L]P_t\\
    + \tau(t)[P_t-(a+q)L]e_ie_i\transpose[P_t-(a+q)L] = 0.
    \label{eqn:check_eqn_tau}
\end{multline}

Since any two matrices in the set \(\{P_t,L,R(T-t),R'(T-t)\}\) commute for any fixed \(t\in[0,T]\), they are simultaneously diagonalizable. Recall that we denote $\lambda_1, \ldots, \lambda_N$ as the eigenvalues of $L$, and $\rho^1_t, \ldots, \rho^N_t$ as the eigenvalues of $P_t$. According to equation~\eqref{eqn:first_charac},  the \(j\)-th eigenvalues of \(R(T-t)\) and \(R'(T-t)\) are \(\int_t^T\sqrt{\frac{Q_1'(T-s)}{c}}e^{-(T-s)(a+q)\lambda_j}\,\ud s\) and \(\sqrt{\frac{Q_1'(T-t)}{c}}e^{-(T-t)(a+q)\lambda_j}\) respectively.
As a result, equation~\eqref{eqn:R_to_P} can be expressed in the spectral form as
\begin{equation}
    \rho^j_t = (a+q)\lambda_j + \frac{c\lambda_j\sqrt{\frac{Q_1'(T-t)}{c}}e^{-(T-t)(a+q)\lambda_j}}{1 + c\lambda_j\int_t^T\sqrt{\frac{Q_1'(T-s)}{c}}e^{-(T-s)(a+q)\lambda_j}\,\ud s}.
    \label{eqn:rho_veri}
\end{equation}
To verify equation~\eqref{eqn:check_eqn_tau}, it suffices to verify its spectral version:
\begin{multline}
    -\frac{\tau'(t)}{\tau(t)}[\rho^j_t-(a+q)\lambda_j][\rho^k_t-(a+q)\lambda_k] + \dot{\rho}^j_t[\rho^k_t-(a+q)\lambda_k] + [\rho^j_t-(a+q)\lambda_j]\dot{\rho}^k_t\\
    -\rho^j_t[\rho^j_t-(a+q)\lambda_j][\rho^k_t-(a+q)\lambda_k] - [\rho^j_t-(a+q)\lambda_j][\rho^k_t-(a+q)\lambda_k]\rho^k_t\\
    + \tau(t)[\rho^j_t-(a+q)\lambda_j][\rho^k_t-(a+q)\lambda_k] = 0,\ \forall j,k\in[N],
    \label{eqn:check_eqn_tau_spectral}
\end{multline}
whose left-hand side can be factored as the sum of two products:
\begin{multline}
    \left(\dot{\rho}^k_t - [\rho^k_t-(a+q)\lambda_k]\rho^k_t - \frac{\rho^k_t-(a+q)\lambda_k}{2}\left[\frac{\tau'(t)}{\tau(t)} - \tau(t)\right]\right)[\rho^j_t-(a+q)\lambda_j]\\
    +\left(\dot{\rho}^j_t - [\rho^j_t-(a+q)\lambda_j]\rho^j_t- \frac{\rho^j_t-(a+q)\lambda_j}{2}\left[\frac{\tau'(t)}{\tau(t)} - \tau(t)\right]\right)[\rho^k_t-(a+q)\lambda_k].
    \label{eqn:symm_eqn_check}
\end{multline}
Thus, it remains to check
\begin{equation}
    \dot{\rho}^k_t - [\rho^k_t-(a+q)\lambda_k]\rho^k_t - \frac{\rho^k_t-(a+q)\lambda_k}{2}\left[\frac{\tau'(t)}{\tau(t)} - \tau(t)\right] = 0,\ \forall k\in[N],
    \label{eqn:check_reduced_eqn}
\end{equation}
which is equivalent to
\begin{equation}
    \frac{\dot{\rho}^k_t}{\rho^k_t - (a+q)\lambda_k} - \rho^k_t = \frac{1}{2}\left[\frac{\tau'(t)}{\tau(t)} - \tau(t)\right],\ \forall k\in[N].
    \label{eqn:suff_check_eqn}
\end{equation}

\medskip

\noindent\textbf{Step 3: Verifying equation~\eqref{eqn:suff_check_eqn}.}
We compute both sides of equation~\eqref{eqn:suff_check_eqn} separately. For the left-hand side, define \(g_k:[0,T]\to \R\) such that
\begin{equation}
    g_k(s) := c\sqrt{\frac{Q_1'(T-s)}{c}}e^{-(T-s)(a+q)\lambda_k}.
    \label{eqn:g_k}
\end{equation}
Straightforward calculations based on equation~\eqref{eqn:rho_veri} give
\begin{equation}
    \frac{\dot{\rho}^k_t}{\rho^k_t - (a+q)\lambda_k} - \rho^k_t = \frac{g_k'(t)}{g_k(t)} - (a+q)\lambda_k,
    \label{eqn:LHS_key_eqn}
\end{equation}
which corresponds to the \(k\)-th eigenvalue of the matrix \(-R''(T-t)[R'(T-t)]^{-1} - (a+q)L\), using equation~\eqref{eqn:first_charac}. 
 
For the right-hand side, rewrite \(\tau\) as a function of \(g_k\) using equations~\eqref{eqn:tau},~\eqref{eqn:rho_veri} and~\eqref{eqn:g_k}:
\begin{equation}
    \tau(t) = \frac{1}{N}\sum_{k=1}^N [\rho^k_t - (a+q)\lambda_k] = \frac{1}{N}\sum_{k=1}^N\frac{\lambda_kg_k(t)}{1+\lambda_k\int_t^T g_k(s)\,\ud s}.
    \label{eqn:tau_g_k}
\end{equation}
Next, rewriting equation~\eqref{eqn:tau_g_k} as the derivative of a logarithm yields
\begin{multline}
    \tau(t) = -\frac{\ud}{\ud t}\log\left[\prod_{k=1}^N \left(1 + \lambda_k\int_t^T g_k(s)\,\ud s\right)\right]^{\frac{1}{N}} \\
    = -\frac{\ud}{\ud t}\log\left[\det\left(I + cL \int_t^TR'(T-s)\,\ud s\right)\right]^{\frac{1}{N}}.
    \label{eqn:tau_as_det}
\end{multline}
Using equation~\eqref{eqn:first_charac}, it follows that
\begin{equation}
    \tau(t) = -\frac{d}{dt}\log Q_1(T-t) = \frac{Q_1'(T-t)}{Q_1(T-t)}.
    \label{eqn:tau_and_Q}
\end{equation}
Taking the derivative with respect to \(t\) on both sides results in
\begin{equation}
    \tau'(t) = \frac{[Q_1'(T-t)]^2 - Q_1(T-t)Q_1''(T-t)}{Q_1^2(T-t)}.
    \label{eqn:tau_derivative}
\end{equation}
Thus, the right-hand side of equation~\eqref{eqn:suff_check_eqn} reads
\begin{equation}
    \frac{1}{2}\left[\frac{\tau'(t)}{\tau(t)} - \tau(t)\right] = -\frac{Q_1''(T-t)}{2Q_1'(T-t)}.
    \label{eqn:RHS_key_eqn}
\end{equation}

Combining results from equations~\eqref{eqn:LHS_key_eqn} and~\eqref{eqn:RHS_key_eqn}, proving the spectral equation~\eqref{eqn:suff_check_eqn} requires demonstrating the following matrix equation:
\begin{equation}
    -R''(T-t)[R'(T-t)]^{-1} - (a+q)L = -\frac{Q_1''(T-t)}{2Q_1'(T-t)}I.
    \label{eqn:matrix_eqn}
\end{equation}
Differentiating both sides of equation~\eqref{eqn:first_charac} with respect to $t$ produces
\begin{equation}
    R''(T-t) = \frac{Q_1''(T-t)}{2\sqrt{cQ_1'(T-t)}}e^{-(T-t)(a+q)L} - \sqrt{\frac{Q_1'(T-t)}{c}}e^{-(T-t)(a+q)L}(a+q)L.
    \label{eqn:R_1_second_derivative}
\end{equation}
Combining equations~\eqref{eqn:R_1_second_derivative} and \eqref{eqn:first_charac} yields equation~\eqref{eqn:matrix_eqn}, thus
concluding the verification process and proving the second part of Theorem~\ref{thm:NE_closed_form}.


\section{Numerical Experiments}\label{sec:numerics}

In this section, we conduct several numerical experiments concerning the NE of the game introduced in Section~\ref{sec:model_setup}. Section~\ref{sec:num_veri} computes the semi-explicit equilibrium  derived in Theorem~\ref{thm:NE_closed_form} under various vertex-transitive graphs, and numerically verifies its consistency with solving the Riccati system~\eqref{eqn:Riccati}. 
Implied by Corollary~\ref{cor:time_scaling}, fictitious play is shown to be a generally applicable technique for any connected graphs with provable convergence. In Section~\ref{sec:num_FP}, we visualize this convergence procedure and quantify its convergence rate for different graphs.
Lastly, in Section~\ref{sec:num_time}, a comparison of the time complexity and the running time among previously mentioned numerical methods is presented.

\subsection{Numerical verification of Theorem~\ref{thm:NE_closed_form}}\label{sec:num_veri}

This section aims to numerically verify that the results derived in Theorem~\ref{thm:NE_closed_form} for vertex-transitive graphs are consistent with the baseline equilibrium strategies. The baseline equilibrium strategy refers to the one obtained by directly solving the Riccati system~\eqref{eqn:Riccati} numerically using the explicit Runge-Kutta method of order 8 \cite{wanner1996solving}. For cases where $G$ is a complete graph, an additional baseline is constructed using the closed-form solution presented in \cite{carmona2013mean}.
Regarding the semi-explicit equilibrium strategy, we first solve equation~\eqref{eqn:second_charac} numerically for \(R\), and then build  \(F^i\) from \(R\) using equations~\eqref{eqn:closed_form_F}--\eqref{eqn:R_to_P}.

In the numerical experiments, we fix the following model parameters:
\begin{equation}
    \sigma = 0.5,\quad a = 0.1,\quad c=1,\quad X_0 = x_0\in\R^N,
\end{equation}
where the initial state \(x_0\) is randomly sampled from a uniform distribution \(x_0^i\overset{\text{i.i.d.}}{\sim} \mathcal{U}(-1,1),\ \forall i\in[N]\). 
The remaining model parameters \((q,\EPS)\), \(N\), \(T\) and \(G\) vary and will be specified in Table~\ref{tab:num_veri}. Throughout the experiments, the relationship \(q^2 = \EPS\) is maintained and \(G\) is taken as one of the following vertex-transitive graphs:
  \begin{align}
        &KG_{5,2}: \text{the \(10\)-vertex Petersen graph},\quad  K_N: \text{the \(N\)-vertex complete graph},\\
        & C_N: \text{the \(N\)-vertex cycle graph}, \qquad \qquad  Q_k: \text{the \(2^k\)-vertex hypercube graph},\\
        & C^{s_1,\dots,s_k}_N: \text{the \(N\)-vertex circulant graph with jumps \(s_1,\ldots,s_k\).}
    \end{align}
These graphs are of great research interest on their own, and we refer readers to \cite{chung1997spectral,godsil2001algebraic} for their definitions, constructions, and properties.

For the time discretization, we partition the time horizon \([0,T]\) into \(N_T = 1000\) subintervals of equal lengths \(h := T/N_T\), and denote the discretization scheme by \(\Delta := \SET{kh:k\in\SET{0,1,\ldots,N_T-1}}\), which is the collection of all the subintervals' endpoints. The difference between the baseline and semi-explicit equilibrium is measured by maximum absolute error (MAE) and maximum relative error (MRE):
\begin{equation}
    \text{MAE}(\tilde{F};\check{F}) := \max_{t\in \Delta}\max_{i\in[N]}\NORM{\check{F}^i_t - \tilde{F}^i_t},\quad
    \text{MRE}(\tilde{F};\check{F}) := \max_{t\in \Delta}\max_{i\in[N]}\frac{\NORM{\check{F}^i_t - \tilde{F}^i_t}}{\NORM{\check{F}^i_t}},
    \label{eqn:MAE_MRE}
\end{equation}
where \(\check{F}^i\) represents the numerical solution of the Ricatti system~\eqref{eqn:Riccati}, and \(\tilde{F}^i\) denotes the semi-explicit counterpart. Small values of \(\text{MAE}\) and \(\text{MRE}\) indicate the alignment between the baseline and the semi-explicit construction, providing evidence for the correctness of Theorem~\ref{thm:NE_closed_form}. Table~\ref{tab:num_veri} provides the \(\text{MAE}\) and \(\text{MRE}\) values for various models. 

\begin{table}[ht!]
\begin{center}
\caption{Model parameters, MAE, and MRE for the examples in Section~\ref{sec:num_veri}}
\label{tab:num_veri}
\begin{tabular}{c|cccccc}
    \toprule
    \# of Players & \(N = 10\) & \(N = 30\) & \(N = 40\)  & \(N = 64\) & \(N = 128\) & \(N = 150\) \\
    \midrule
    Graph \(G\) & \(KG_{5,2}\) & \(K_{30}\) & \(C_{40}\)  & \(Q_6\) & \(Q_7\) & \(C_{150}^{11}\) \\
    Time \(T\) & \(1\) & \(1\) & \(1.5\)  & \(1.5\) & \(2\) & \(2.5\)\\
    \((q,\EPS)\) & \((0,0)\) & \((1,1)\)  & \((0,0)\) & \((2,4)\) & \((0,0)\) & \((1,1)\) \\
    \(\text{MAE}\) & \(3.88e{-6}\) & \(2.09 e{-5}\)  & \(7.12e{-6}\) & \(4.67e{-6}\) & \(1.92e{-6}\) & \(3.16e{-5}\)\\
    \(\text{MRE}\) & \(5.07e{-6}\) & \(8.96e{-5}\)  & \(1.44e{-5}\) & \(8.21e{-4}\) & \(5.30e{-6}\) & \(1.16e{-3}\)\\
    \bottomrule
\end{tabular}
\end{center}
\end{table}

Next, we visualize the equilibrium state and strategy processes. To this end, we adopt the Euler scheme to equation~\eqref{eqn:state_dynamics}  and simulate it with the feedback strategy $\alpha_t^i$ plugged in using the previously computed $\check F_t^i$ and $\tilde F_t^i$ (cf. \eqref{eqn:NE_strategy} and \eqref{eqn:transitive_NE}): 
\begin{align}
    &\hat{X}^i_{t+h} = \hat{X}^i_t + \left[a\left(\frac{1}{\sqrt{d_{v_i}}}\sum_{j:v_j\sim v_i}\frac{1}{\sqrt{d_{v_j}}}\hat{X}^j_t - \hat{X}^i_t\right) + \hat{\alpha}^i_t\right]h + \sigma\sqrt{h}\xi^i_t,\ \xi^i_t\overset{\text{i.i.d.}}{\sim} \mathcal{N}(0,1),\\
    & \hat{\alpha}^i_t = -qe_i\transpose L\hat{X}_t - e_i\transpose F^i_t\hat{X}_t,\quad \forall i\in[N],\ \forall t\in \Delta,\ \text{where}\ F^i= \check{F}^i\ \text{or}\ \tilde{F}^i.
\end{align}
In Figures~\ref{fig:complete_check}--\ref{fig:cycle_check}, trajectories of the equilibrium state process \(\hat{X}\) and the equilibrium strategy process \(\hat{\alpha}\) from four randomly selected players are presented for complete and cycle graphs, with the choice of time discretization \(N_T = 50\), time horizon \(T = 1\), and model parameters \((q,\EPS) = (1,1)\).

\begin{figure}
    \centering
   \includegraphics[width = \textwidth, trim = {0 40 0 0}, clip]{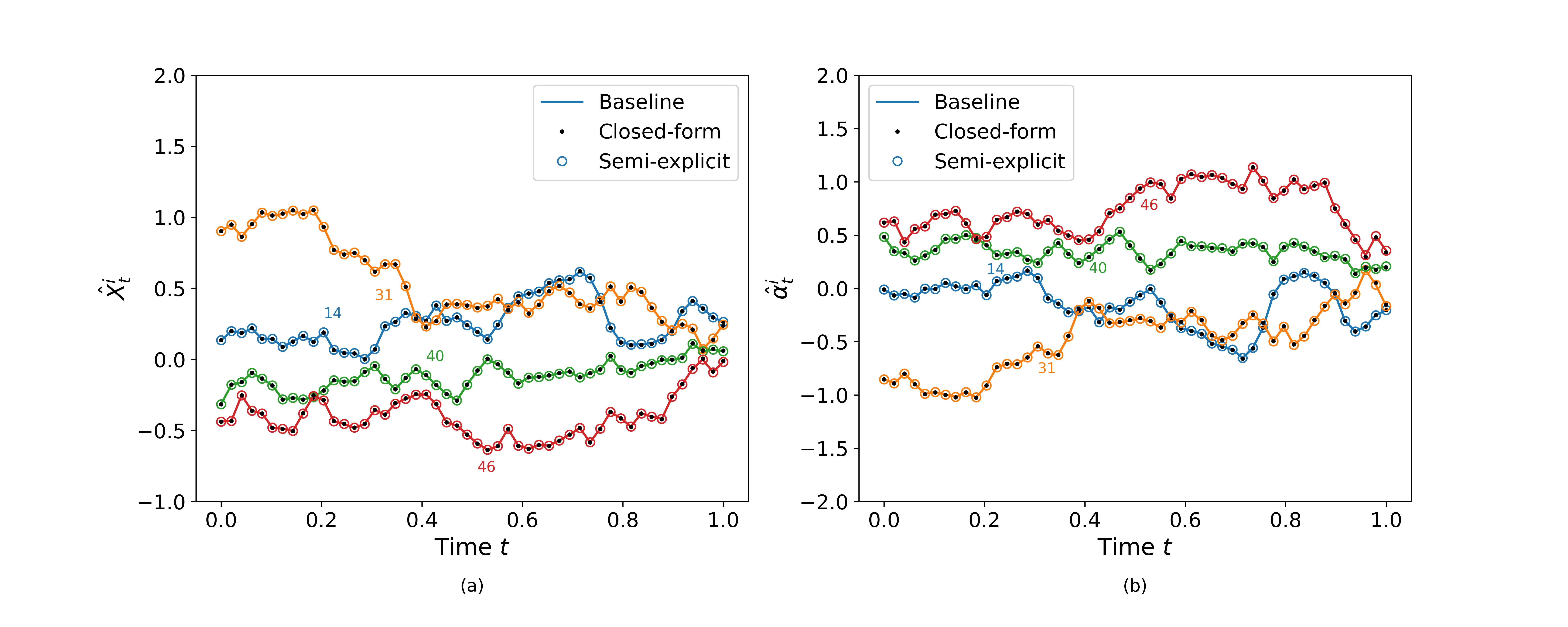}
    \caption{Comparisons of equilibrium state (left panel) and strategy (right panel) trajectories for \(N = 50\) players in the linear-quadratic game on the complete graph \(G = K_{50}\).
    In both panels, the colored solid lines represent the baseline solution (by numerically solving the Riccati system~\eqref{eqn:Riccati}), the colored circles are obtained by numerically solving the semi-explicit solution in Theorem~\ref{thm:NE_closed_form}, and the black dots are computed by the closed-form solution given in \cite{carmona2013mean}.
    For the sake of clarity, only trajectories of four randomly selected players are plotted.
    }
    \label{fig:complete_check}
\end{figure}

\begin{figure}
    \centering
    \includegraphics[width = \textwidth, trim = {0 35 0 0}, clip]{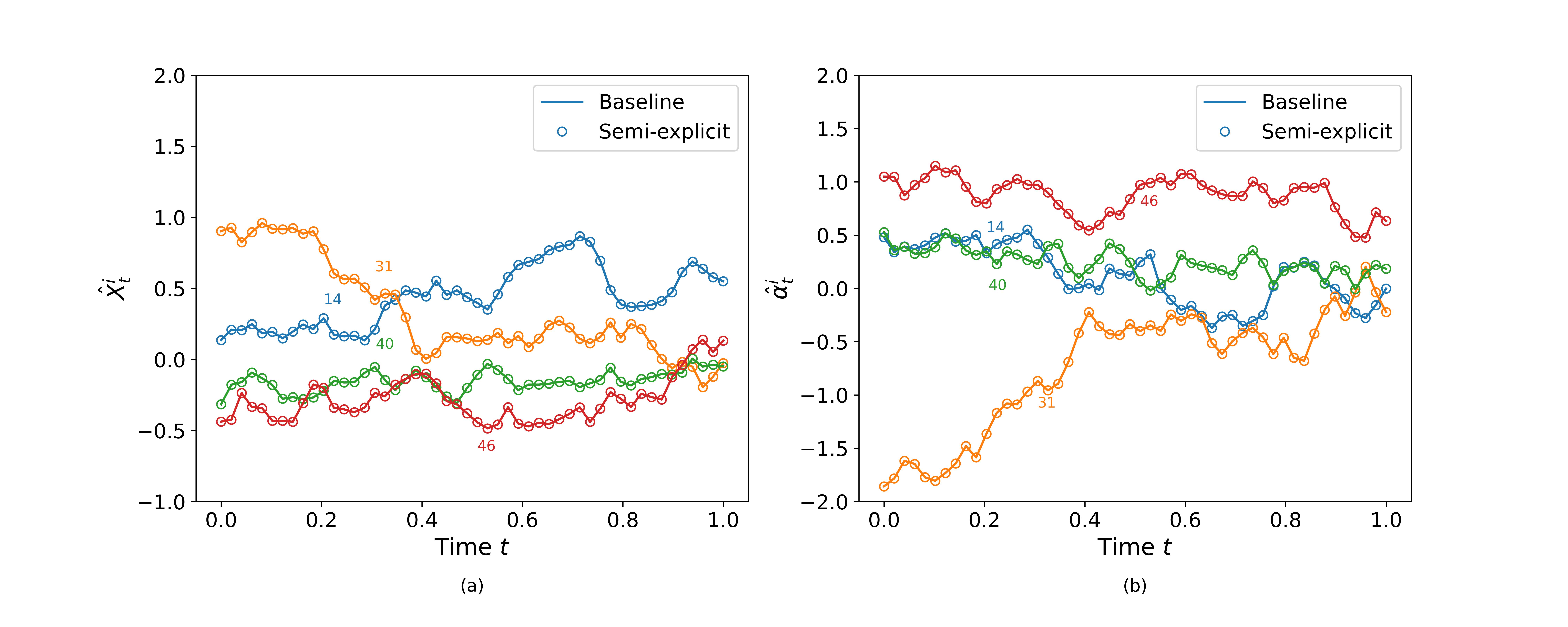}
    \caption{Comparisons of equilibrium state (left panel) and strategy (right panel) trajectories for \(N = 50\) players in the linear-quadratic game on the cycle graph \(G = C_{50}\).
 In both panels, the colored solid lines represent the baseline solution (by numerically solving the Riccati system~\eqref{eqn:Riccati}), and the colored circles are obtained by numerically solving the semi-explicit solution in Theorem~\ref{thm:NE_closed_form}.
    For the sake of clarity, only trajectories of four randomly selected players are plotted }
    \label{fig:cycle_check}
\end{figure}

\subsection{Numerical convergence of fictitious play}\label{sec:num_FP}

Next, we examine the convergence of FP, visualize the convergence procedure, and compare the rate of convergence for different graphs. Our experiment identifies a contraction-mapping-like structure, linking the rate of convergence to the graph structure. Moreover, as will be observed below, condition~\eqref{eqn:FP_final_condition} for Corollary~\ref{cor:time_scaling} is sufficient but not necessary for the convergence of FP. This further validates the general applicability of FP as a state-of-the-art numerical technique solving for the Nash equilibrium.

In the numerical experiments, we fix the following model parameters:
\begin{equation}
    N = 50,\quad T = 1,\quad q = 0.5,\quad \EPS = 1,\quad \sigma = 0.5,\quad a = 0.1,\quad c=1,\quad X_0 = x_0\in\R^N,
\end{equation}
where the initial state \(x_0\) is randomly sampled from a uniform distribution \(x_0^i\overset{\text{i.i.d.}}{\sim} \mathcal{U}(-1,1),\ \forall i\in[N]\).
Throughout the experiments, \(G\) is taken as one of the following graphs, with the specific choice given in Tables~\ref{tab:num_FP_deter}--\ref{tab:num_FP_rand}:
  \begin{align}
        &K_N: \text{the \(N\)-vertex complete graph},\qquad C_{N}: \text{the \(N\)-vertex cycle graph},\\
        & S_N: \text{the \(N\)-vertex star graph}, \qquad \qquad \\
        &K_{m,m}: \text{the \(2m\)-vertex complete bipartite graph with partitions of size \(m\) and \(m\)},\\
        & \text{RSG}^s_N: \text{the \(N\)-vertex random sparse graph with edge density \(s\)},\\
        &\text{RMJ}^d_N: \text{the \(N\)-vertex \(d\)-regular random Ramanujan graph.}
    \end{align}
Among the graphs mentioned above, \(\text{RSG}^s_N\) and \(\text{RMJ}^d_N\) are random graphs, while the others are deterministic. For the construction of random sparse graphs, random edges are added to a randomly generated spanning tree until the sparsity measure of the graph satisfies certain conditions. Here, we select the edge density \(s\) as the sparsity measure, defined as \(s:= |E|/\binom{|V|}{2}\), which is the ratio between the number of edges present in the graph and the maximum possible number of edges the graph can contain. On the other hand, the \(d\)-regular Ramanujan graph \cite{lubotzky1988ramanujan} is a randomly generated regular expander graph with common degree \(d\), which essentially saturates the Alon–Boppana bound \cite{hoory2006expander}. Ramanujan graphs are important examples of highly connected sparse graphs \cite{chung1997spectral}, demonstrating the possible coexistence of two seemingly contradictory graph properties. Therefore, we include them in the experiments. 

Starting with initial strategies \(\check{F}^{i,0}_t = 0,\ \forall i\in[N]\), \(t\in\Delta\), we compute \(\{\check{F}^{i,k}_t:i\in[N],t\in\Delta\}\) at stage \(k\in\N\) by numerically solving system~\eqref{eqn:recursive_Riccati_FP} iteratively with a time discretization level \(N_T = 1000\).
We perform \(N_{\text{round}} = 10\) rounds of FP for each numerical example and assess the convergence of FP by calculating \(\text{MAE}(\check{F}^{\cdot,N_{\text{round}}};\check{F})\) and \(\text{MRE}(\check{F}^{\cdot,N_{\text{round}}};\check{F})\), both of which have been previously defined in equation~\eqref{eqn:MAE_MRE}.
The numerical results are organized in Tables~\ref{tab:num_FP_deter}--\ref{tab:num_FP_rand}:
Table~\ref{tab:num_FP_deter} for deterministic graphs and Table~\ref{tab:num_FP_rand} for three different samples of random graphs \(\text{RSG}^s_N\) and \(\text{RMJ}^d_N\).

\begin{table}[ht!]
\begin{center}
\caption{MAE and MRE for LQ games on deterministic graphs in Section~\ref{sec:num_FP}}
\label{tab:num_FP_deter}
\begin{tabular}{c|cccc}
    \toprule
    Problem & \(G = K_{50}\) & \(G = C_{50}\) & \(G = S_{50}\) & \(G = K_{25,25}\) \\
    \midrule
    \(\text{MAE}\) & \(2.19e{-3}\) & \(8.58e{-2}\)  & \(1.50e{-1}\) & \(1.76e{-2}\) \\
    \(\text{MRE}\) & \(3.21e{-3}\) & \(1.02e{-1}\)  & \(1.69e{-1}\) & \(2.26e{-2}\) \\
    \bottomrule
\end{tabular}
\end{center}
\end{table}

\begin{table}[ht!]
\begin{center}
\caption{MAE and MRE for LQ games on random graphs in Section~\ref{sec:num_FP}}
\label{tab:num_FP_rand}
  \begin{tabular}{c|cccccc}
    \toprule
    \multirow{2}{*}{Problem} &
      \multicolumn{3}{c}{\(G = \text{RSG}_{50}^{0.125}\)} &
      \multicolumn{3}{c}{\(G = \text{RMJ}_{50}^{6}\)} \\
    & Sample 1 & Sample 2 & Sample 3 & Sample 1 & Sample 2 & Sample 3\\
      \midrule
    MAE & \(5.62e{-2}\) & \(4.60e{-2}\) & \(4.58e{-2}\) & \(3.69e{-2}\) & \(3.70e{-2}\) & \(3.68e{-2}\)\\
    MRE & \(7.23e{-2}\) & \(6.03e{-2}\) & \(5.98e{-2}\) & \(4.82e{-2}\) & \(4.82e{-2}\) & \(4.83e{-2}\)\\
    \bottomrule
  \end{tabular}
\end{center}
\end{table}

The MAEs and MREs shown in Tables~\ref{tab:num_FP_deter}--\ref{tab:num_FP_rand} are consistently small, implying the numerical convergence of FP, even when the model parameters are deliberately chosen not to satisfy condition~\eqref{eqn:FP_final_condition} of Corollary~\ref{cor:time_scaling}. We observe that the magnitude of errors varies for different graph structures. To better compare the convergence rates, we plot \(\log\text{MAE}(\check{F}^{\cdot,k-1};\check{F}^{\cdot,k})\)  \textit{vs.} the FP stage index \(k\), as shown in Figure~\ref{fig:FP_graph}. The log-MAE curves are nearly straight lines with slopes depending on the graph structures. This implies 
a contraction-mapping-like structure
\begin{equation}
    \text{MAE}(\check{F}^{\cdot,k};\check{F}^{\cdot,k+1})\approx e^{C_G}\cdot\text{MAE}(\check{F}^{\cdot,k-1};\check{F}^{\cdot,k}),\ \forall k\in\N,
    \label{eqn:contraction_like}
\end{equation}
where \(C_G \in (-\infty,0)\) is a constant that depends on the graph \(G\). A simple linear regression gives   
\begin{align*}
    &C_{K_{50}} = -4.55, \; && C_{C_{50}} = -1.94, \; &&C_{S_{50}} = -2.01, \; \\
    &C_{K_{25, 25}} = -2.61, \; &&C_{\text{RSG}^{0.125}_{50}} = -2.06, \;&& C_{\text{RMJ}^6_{50}} = -2.07.
\end{align*}
For the purpose of comparison, we provide below the norms of the graph Laplacians \(L_G\) used in the numerical experiments, denoted as \(N_G:=\NORM{L_G}\in[0,2]\), that
\begin{align*}
   & N_{K_{50}} = 1.02, \;  && N_{C_{50}} = 2.00, \; &&N_{S_{50}} = 2.00,  \; \\
   & N_{K_{25, 25}} = 2.00, \; && N_{\text{RSG}^{0.125}_{50}} = 1.68, \; &&N_{\text{RMJ}^6_{50}} = 1.70.
\end{align*}
A pairwise comparison between complete graphs and the other graphs shows that a large norm of the graph Laplacian possibly results in a slower rate of convergence of FP, which is consistent with the insights provided in Remark~\ref{rem:rate_graph}.
However, the norm of the graph Laplacian is not the only factor that affects the rate of convergence of FP.
For instance, the complete bipartite graph has \(N_{K_{25,25}} = 2.00\), taking the largest value on $[0,2]$, yet FP converges faster on complete bipartite graphs compared to random sparse graphs and Ramanujan graphs.
Consequently, accurately quantifying the rate of convergence of FP in terms of the graph structure requires extra work and is left for future studies.

\begin{figure}
    \centering
    \includegraphics[width = 0.8\textwidth]{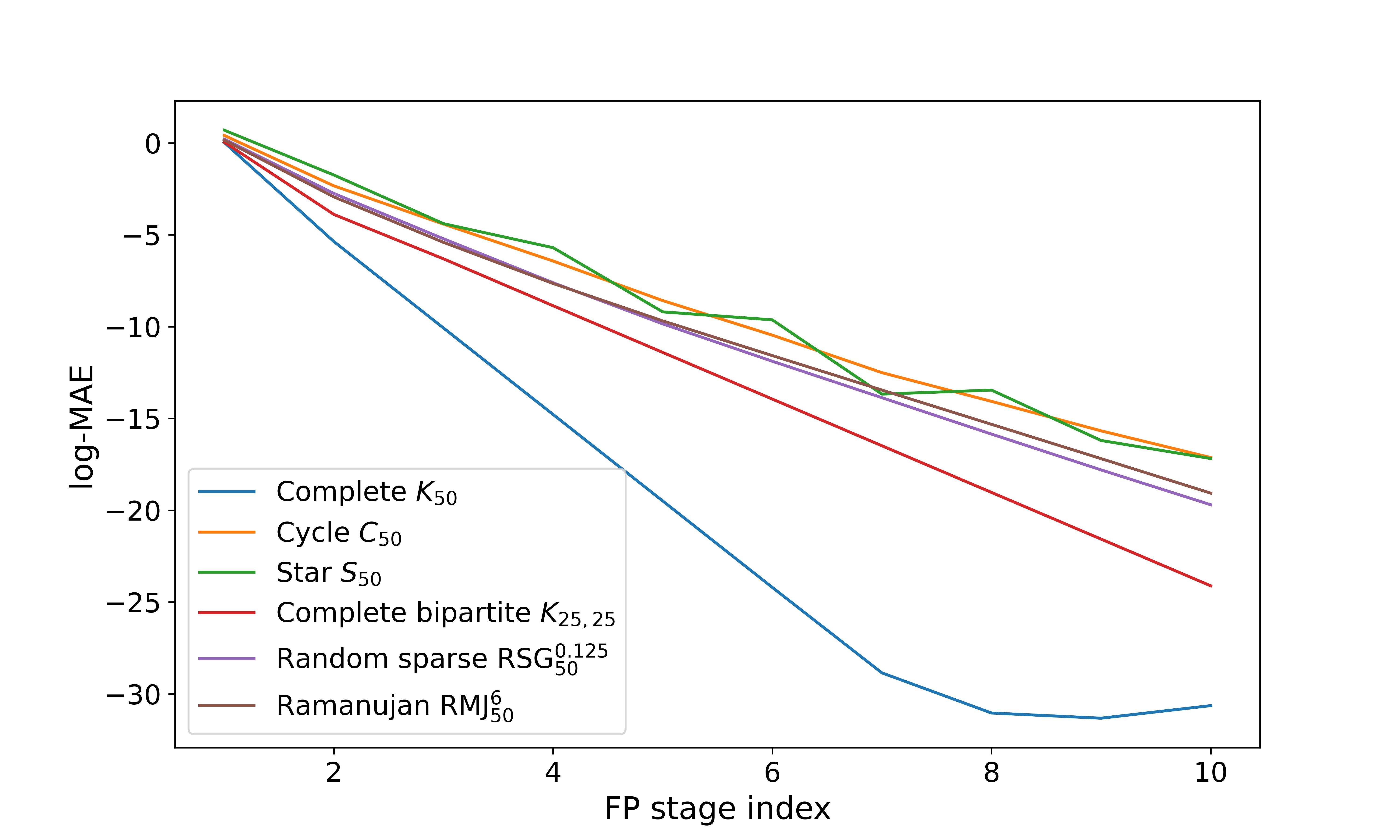}
    \caption{Comparisons of log-MAE curves for the linear-quadratic game with \(N = 50\) players on different graphs. The behavior at the tail of the log-MAE curves for the complete graphs is due to the numerical round-off error
   }
    \label{fig:FP_graph}
\end{figure}

\subsection{The time complexity analysis and running time comparison}\label{sec:num_time}

So far, we have implemented three numerical methods for solving the NE of linear-quadratic games on graphs: (i) the baseline construction by directly solving the Riccati system~\eqref{eqn:Riccati}; (ii) the semi-explicit solution based on Theorem~\ref{thm:NE_closed_form}; and (iii) fictitious play by iteratively solving system~\eqref{eqn:recursive_Riccati_FP}.
In this section, we conduct a comparative study of the efficiency of these methods by investigating both the theoretical time complexity and the numerical running time, aiming to provide guidance on the trade-off between the efficiency and the generality of these numerical methods.

\medskip
\noindent\textbf{The time complexity analysis.} We first discuss the time complexity of the three numerical methods, and the comparison intends to show the efficiency of constructing the semi-explicit NE using Theorem~\ref{thm:NE_closed_form}.

For illustration purposes, the analysis of time complexity uses the forward Euler method as the numerical ODE solver.
In the baseline construction for solving the Riccati system~\eqref{eqn:Riccati}, each Euler step computes \((\check{F}^1_{t+h},\ldots,\check{F}^N_{t+h})\) based on \((\check{F}^1_{t},\ldots,\check{F}^N_{t})\).
The calculation of the derivative \(\dot{\check{F}}^i_{t}\) requires \(O(N^3)\) calculations, so a single Euler step costs \(O(N^4)\), and constructing the entire baseline solution requires \(O(N_TN^4)\).
Meanwhile, only one matrix-valued ODE is numerically solved during the semi-explicit construction, so the whole procedure only costs \(O(N_TN^3)\).

Lastly, we analyze the time complexity of FP.
To conduct a fair comparison, the number of rounds of FP, denoted by \(N_{\text{round}}\), should be such that \(\check{F}^{\cdot,N_{\text{round}}}\) has approximately the same order of error as the baseline construction \(\check{F}\). Since the Euler method has convergence order one \cite{kress2012numerical}, i.e., \(\text{MAE}(\check{F},F) = O(h)\) as \(h\to 0\) with a time discretization scheme that has step size \(h\), \(N_{\text{round}}\) should be such that \(\text{MAE}(\check{F}^{\cdot,N_{\text{round}}},F) = O(h)\).
Due to the contraction-mapping-like structure~\eqref{eqn:contraction_like} empirically observed, \(N_{\text{round}} = O(\log \frac{1}{h})\) and the entire procedure of FP costs \(O(N_T\log N_TN^4)\). The above discussion is summarized in Table~\ref{tab:time_complex} below.

\begin{table}[ht!]
\begin{center}
\caption{Time complexity analysis of three numerical methods. All constructions have a maximum absolute error of the same order.}
\label{tab:time_complex}
\begin{tabular}{c|ccc}
    \toprule
    Construction Methods & Baseline & Semi-explicit & Fictitious play \\
    \midrule
    Time Complexity & \(O(N_TN^4)\) & \(O(N_TN^3)\)  & \(O(N_T\log N_TN^4)\) \\
    \bottomrule
\end{tabular}
\end{center}
\end{table}

\medskip
\noindent\textbf{Running time.}
Next, we compare the running time of the three methods under the following choice of model parameters:
\begin{equation}
    G = K_N,\quad T = 1,\quad q = 1,\quad \EPS = 1,\quad \sigma = 0.5,\quad a = 0.1,\quad c=1,\quad X_0 = x_0\in\R^N,
\end{equation}
where the initial state \(x_0\) is randomly sampled from a uniform distribution \(x_0^i\overset{\text{i.i.d.}}{\sim} \mathcal{U}(-1,1),\ \forall i\in[N]\).
The remaining model parameter \(N\) varies and will be specified in Table~\ref{tab:num_running}. To ensure a fair comparison, we fix the error tolerance level at \(\text{tol} = 10^{-5}\) as indicated by Table~\ref{tab:num_veri}, and stop the FP procedure at stage \(k\) once \(\text{MAE}(\check{F}^{\cdot,k-1},\check{F}^{\cdot,k}) < \text{tol}\).
The numerical outcomes are presented in Table~\ref{tab:num_running} and align with the time complexity analysis shown in Table~\ref{tab:time_complex}.

\begin{table}[!ht]
\begin{center}
\caption{Running time comparison of three numerical methods: (i) Baseline from directly solving the Riccati system~\eqref{eqn:Riccati}; (ii) Semi-explicit based on Theorem~\ref{thm:NE_closed_form}; and (iii) Fictitious play by iteratively solving system~\eqref{eqn:recursive_Riccati_FP}. 
For each value of \(N\), the exact running time is measured in minutes, while the percentage is calculated relative to the baseline construction.}

\label{tab:num_running}
\begin{tabular}{c|cccccc}
    \toprule
    Running time (mins) & \multicolumn{2}{c}{\(N = 32\)} &
    \multicolumn{2}{c}{\(N = 64\)} &
    \multicolumn{2}{c}{\(N = 128\)} \\
    \midrule
    Baseline & \(0.03\) & \(100\%\) & \(0.29\) & \(100\%\)  & \(2.50\) & \(100\%\) \\
    Semi-explicit & \(0.01\) & \(33\%\) & \(0.05\) & \(17\%\) & \(0.25\) & \(10\%\) \\
    Fictitious play & \(0.18\) & \(600\%\) & \(1.68\) & \(579\%\)  & \(10.01\) & \(400\%\) \\
    \bottomrule
\end{tabular}
\end{center}
\end{table}

\section{Conclusion and Future Studies}\label{sec:conclusion_future}

In this paper, we analyzed a class of linear-quadratic games on graphs~\eqref{eqn:state_dynamics}--\eqref{def:J}, where the players' interactions are characterized by the graph Laplacian. 
For generic graphs, we employed fictitious play, one of the most important numerical techniques to compute Nash equilibria, and provided its convergence proof. 
For vertex-transitive graphs with nice symmetry properties, we developed a semi-explicit characterization of the Nash equilibrium and proved its well-posedness. Additionally,  numerical experiments were conducted, providing numerical verifications for our previously proved theorems and a comparison among the numerical methods. The findings consist of: (i) The semi-explicit characterization (Theorem~\ref{thm:NE_closed_form}) is the most efficient method, but it only works for vertex-transitive graphs when the model parameters satisfy \(\EPS = q^2\); (ii) The baseline method has intermediate efficiency and requires the linear-quadratic structure of the game, but it is applicable to any connected graphs with no restrictions on the model parameters; and (iii) Fictitious play is the least efficient method, but it can be parallelized and applied beyond linear-quadratic games.

As machine learning methods have become more powerful, there has been an increasing trend to solve control problems and games numerically using deep learning and reinforcement learning techniques. We refer readers to the survey \cite{hu2023recent} for a general overview of these methods. In particular, direct parameterization \cite{han2016deep} and Deep BSDE \cite{han2017deep} are two popular methods for control problems and have been generalized to various extents. Furthermore, \cite{han2020deep,hu2019deep} use the deep version of fictitious play as a fundamental strategy for solving the NE.
Along with this research direction, we are developing graph-based deep-learning algorithms for solving the Nash equilibrium of large sparse network games. This effort is mainly motivated by the theoretical results and numerical findings in this work. The algorithm accelerates training by reducing the number of parameters without significantly losing the accuracy. 
The details of the algorithm and the numerical outcomes will be presented in our upcoming work \cite{hu2024deep}.

Finally, we point out that several directions are worth further investigation.
For instance, although the global convergence of fictitious play has been observed in numerical experiments even when all model parameters take relatively large values, we were only able to provide the proof of convergence (cf. Corollary~\ref{cor:time_scaling}) under a smallness condition.  Further relaxation of the technical assumption would be valuable.
Additionally, regarding the rate of convergence, fictitious play numerically exhibits a graph-dependent rate as seen in Section~\ref{sec:num_FP}, which has not been accurately characterized in Corollary~\ref{cor:time_scaling}. These studies on the convergence of fictitious play are left to future investigation. Separately, for linear-quadratic games on vertex-transitive graphs, we encounter technical difficulties when developing the semi-explicit characterization for the NE when \(\EPS \neq q^2\), which remains unsolved. The game equilibrium and asymptotic correlations of the limiting regime $(N \to \infty)$, in the spirit of  \cite{lacker2022case}, are also interesting topics for future work. 
In addition, analyzing our generic model under more restrictive information structures, e.g., a partially observable setting where agents only access local information within its graph neighborhood and maintain beliefs for unobservable information, would be an interesting direction for future work.
Along this research direction, numerical works have been done in the field of multi-agent reinforcement learning under a collaborative game setting \cite{chu2020mult,zhang2018full}.
Theoretical works have been done under a collaborative setting in the mean-field regime when a network of states (but not players) is given \cite{gu2025mean}.
To our best knowledge, there is no theoretical analysis for the setting of a competitive game on graphs with local information structures.
Such a setting provides a more realistic modeling of strategic interactions on graphs, compared to the complete information case discussed in this paper, and deserves future investigations.

\section*{Acknowledgement}
R.H. was partially supported by a grant from the Simons Foundation (MP-TSM-00002783), and the ONR grant under \#N00014-24-1-2432.

\newcommand{\etalchar}[1]{$^{#1}$}
\providecommand{\bysame}{\leavevmode\hbox to3em{\hrulefill}\thinspace}
\providecommand{\MR}{\relax\ifhmode\unskip\space\fi MR }
\providecommand{\MRhref}[2]{%
  \href{http://www.ams.org/mathscinet-getitem?mr=#1}{#2}
}
\providecommand{\href}[2]{#2}

\begin{appendices}
\section{Heuristic Derivations}\label{sec:heuristic}

This appendix provides detailed heuristic derivations of how equations~\eqref{eqn:closed_form_F},  together with \eqref{eqn:second_charac}--\eqref{eqn:Q} and \eqref{eqn:R_to_P}, are obtained. This supplements the steps briefly outlined in Section~\ref{sec:roadmap}.

During the heuristic derivations below, several assumptions must be made. For a smoother presentation flow, these assumptions (\hyperref[assu:commute]{A.1})--(\hyperref[assu:power_series]{A.4}) are summarized later in Remark~\ref{rem:assumptions}, and are referred to during the heuristic derivation. However, we emphasize that verifying these assumptions is not necessary due to the existence of the verification procedure presented in Section~\ref{subsec:verification}.

Let  \(P:[0,T]\to \R^{N\times N}\) be a matrix-valued function that is defined in terms of  \(F^1_t,\cdots,F^N_t\):
    \begin{equation}
        P_t := \sum_{k=1}^N \left[(a+q)L + F^k_t\right]e_ke_k\transpose = (a+q)L + \sum_{k=1}^N F^k_te_ke_k\transpose.
        \label{eqn:defn_P}
    \end{equation}
Under assumption~(\hyperref[assu:invertible]{A.1}), the Riccati system~\eqref{eqn:Riccati} can be rewritten in terms of \(P_t\):
\begin{equation}
    \dot{F}^i_t - P_tF^i_t - F^i_tP_t +(\EPS-q^2)Le_ie_i\transpose L + F^i_te_ie_i\transpose F^i_t = 0,\quad F^i_T = cLe_ie_i\transpose L.
    \label{eqn:Riccati_P}
\end{equation}
Treating $P$ as given, the solution to equation~\eqref{eqn:Riccati_P} can be represented in terms of \(P_t\), denoted as \(F^1_t(P),\cdots,F^N_t(P)\). Substituting \(F^1_t(P),\cdots,F^N_t(P)\) into equation~\eqref{eqn:defn_P} provides an equation solely for $P$:
\begin{equation}
    P_t = (a+q)L + \sum_{k=1}^N F^k_t(P)e_ke_k\transpose.
\end{equation}
Once a fixed point $P^\ast$ is obtained from the above equation, $F_t^k(P^\ast)$ naturally provides the solution.


On the other hand, vertex-transitive graphs naturally exhibit rich symmetry, which can eliminate many quantities' dependence on the player index $i$. To facilitate the derivations, we use the regular representation of the graph automorphism group, which is defined below. 


\begin{defn}
Recall from Definition~\ref{defn:transitive} that $\text{Aut}(G)$ denotes the set of automorphisms of \(G\). 
    For any \(\varphi\in \text{Aut}(G)\), the associated regular representation \(R_\varphi \in\R^{N\times N}\) is defined as an invertible matrix such that \(\forall i\in [N]\),
    \begin{equation}
        R_\varphi e_i = e_{\varphi(i)}.
    \end{equation}
    \label{def:group_rep}
\end{defn}

Immediately from Definition~\ref{def:group_rep}, one has  $R_\varphi R_\psi = R_{\varphi\circ\psi}$, $R^{-1}_\varphi = R_{\varphi^{-1}} = R_\varphi\transpose$, $\forall \varphi,\psi\in\text{Aut}(G)$.
The following lemma outlines key properties for leveraging the symmetry of vertex-transitive graphs.

\begin{lem}[{\cite[Lemma~4.1]{lacker2022case}}]  \label{lemma:group_rep}

    For a vertex-transitive graph \(G\) with the vertex set \(V = [N]\), the following statements are true:
\begin{enumerate}[(i)]
    \item \(L\) commutes with \(R_\varphi\) for every \( \varphi\in\mathrm{Aut}(G)\).
    
    \item If \(Y\in\R^{N\times N}\) commutes with \(R_\varphi\) for every \( \varphi\in\mathrm{Aut}(G)\), then \(Y_{ii} = \frac{1}{N}\Tr(Y)\),  \(\forall i\in[N]\).
    
    \item If \(Y^1,\cdots,Y^N\in\R^{N\times N}\) are such that \(R_\varphi Y^i = Y^{\varphi(i)}R_\varphi\) holds for every \(\varphi\in\mathrm{Aut}(G)\) and \( i\in[N]\), then \(Y_{ii}^i = Y_{jj}^j\), \(\forall i,j\in[N]\).
\end{enumerate}
  
\end{lem}

\subsection{Solving \texorpdfstring{\(F^i\)}{} in terms of \texorpdfstring{\(P\)}{}}\label{app:s1}

Our first step is to solve the Riccati equation~\eqref{eqn:Riccati_P} for $F_t^i$, treating $P$ as given, and represent the solution in terms of \(P\). 
To this end, we consider the linear system for $(Y_t^i, \Lambda_t^i)$:
\begin{equation}
    \begin{bmatrix}
    \dot Y^i_t \\
    \dot \Lambda^i_t
    \end{bmatrix}
    = 
    \begin{bmatrix}
        P_t & -(\EPS-q^2)Le_ie_i\transpose L\\
        e_ie_i\transpose & -P_t 
    \end{bmatrix}
     \begin{bmatrix}
         Y^i_t \\
    \Lambda^i_t
    \end{bmatrix},
    \label{eqn:tran_underlying}
\end{equation}
with terminal conditions \(Y^i_T = cLe_ie_i\transpose L\), \(\Lambda^i_T = I\).
If the solution to equation~\eqref{eqn:tran_underlying} is found and \(\Lambda^i_t\) is invertible for any \(t\in[0,T]\), then $F^i_t = Y^i_t(\Lambda^i_t)^{-1}$ is the solution to equation~\eqref{eqn:Riccati_P} \cite{blanes2000approximate}. Under the condition \(q^2 = \EPS\), the equation for $Y_t^i$ becomes decoupled, resulting in \(Y^i_t = \tilde{P}_tY^i_T\), where
\begin{equation}
    \tilde{P}_t = e^{-\int_t^T P_s\,ds}.
    \label{eqn:tilde_P}
\end{equation}
By substituting \(Y^i_t = \tilde{P}_tY^i_T\) into equation~\eqref{eqn:tran_underlying}, we obtain an ODE for $\Lambda_t^i$: 
\(\dot{\Lambda}^i_t = e_ie_i\transpose\tilde{P}_tY^i_T - P_t\Lambda^i_t\). The solution to this equation:
\(\Lambda^i_t = \tilde{P}^{-1}_t\left(\Lambda^i_T - \int_t^T \tilde{P}_se_ie_i\transpose\tilde{P}_sY^i_T\,\ud s\right)\).
Therefore, incorporating the terminal conditions, the underlying linear system yields the following solution:
\begin{equation}
    \begin{cases}
    Y_t^i = c\tilde{P}_tLe_ie_i\transpose L\\
    \Lambda_t^i = \tilde{P}^{-1}_t\left(I - c\int_t^T \tilde{P}_se_ie_i\transpose\tilde{P}_sLe_ie_i\transpose L\,\ud s\right)
    \end{cases}.
    \label{eqn:Y_Lambda}
\end{equation}

Assuming~(\hyperref[assu:tilde_P_commute]{A.2}) and applying Lemma~\ref{lemma:group_rep}(ii) gives,
\begin{equation}
    e_i\transpose\tilde{P}_sLe_i = \frac{1}{N}\Tr(\tilde{P}_sL).
    \label{eqn:P_to_trace}
\end{equation}
This simplifies \(\Lambda_t^i\) to:
\begin{equation}
    \Lambda_t^i = \tilde{P}^{-1}_t\left(I - c\int_t^T \frac{\Tr(\tilde{P}_sL)}{N}\tilde{P}_se_ie_i\transpose L\,\ud s\right).
    \label{eqn:Lambda}
\end{equation}
Under assumption~(\hyperref[assu:invertible]{A.3}), the solution to equation~\eqref{eqn:Riccati_P} is provided as
\begin{equation}
    F^i_t = c\tilde{P}_tLe_ie_i\transpose L\left(I - c\int_t^T \frac{\Tr(\tilde{P}_sL)}{N}\tilde{P}_se_ie_i\transpose L\,\ud s\right)^{-1}\tilde{P}_t.
    \label{eqn:solution_Riccati_P}
\end{equation}
Up until now, we've derived a solution \(F^i_t\) expressed in terms of \(P\).

\begin{rem}
    The condition $q^2 = \varepsilon$ ensures that the underlying linear system for $(Y^i_t, \Lambda^i_t)$ is decoupled.
    Without this condition, we need to solve the full system~\eqref{eqn:tran_underlying}, for which the existence of a closed-form solution remains unclear since, when $t \neq s$:   
    \begin{equation*}
         \begin{bmatrix}
        P_t & -(\EPS-q^2)Le_ie_i\transpose L\\
        e_ie_i\transpose & -P_t 
    \end{bmatrix}  \text{ does not commute with }  \begin{bmatrix}
        P_s & -(\EPS-q^2)Le_ie_i\transpose L\\
        e_ie_i\transpose & -P_s 
    \end{bmatrix}. \end{equation*}
    
    \label{rem:q^2=eps}
\end{rem}

\subsection{Expressing \texorpdfstring{\(P\)}{} in terms of \texorpdfstring{\(\eta\)}{}}\label{app:s2}

To further simplify equation~\eqref{eqn:solution_Riccati_P}, we  define
\begin{equation}
    \Sigma^i_t := \int_t^T \frac{\Tr(\tilde{P}_sL)}{N}\tilde{P}_se_ie_i\transpose L\,\ud s,
    \label{eqn:Sigma}
\end{equation}
and
\begin{equation}
    \eta^i_t := e_i\transpose L\left(I - c\Sigma^i_t\right)^{-1}\tilde{P}_te_i.
    \label{eqn:eta}
\end{equation} 
Thus $F_t^i$ admits the representation \(F^i_t = c\tilde{P}_tLe_ie_i\transpose L\left(I - c\Sigma^i_t\right)^{-1}\tilde{P}_t\) and satisfies \(F^i_te_i = c\eta^i_t\tilde{P}_tLe_i\).
Under assumption~(\hyperref[assu:tilde_P_commute]{A.2}), using Lemma~\ref{lemma:group_rep}(i) and equation~\eqref{eqn:Sigma} leads to  
\(R_\varphi \Sigma^i_t = \Sigma^{\varphi(i)}_tR_\varphi\), \(\forall \varphi\in \mathrm{Aut}(G)\).
With the additional assumption~(\hyperref[assu:power_series]{A.4}), we find:
\begin{equation}
    R_\varphi L\left(I - c\Sigma^i_t\right)^{-1} = L\left(I - c\Sigma^{\varphi(i)}_t\right)^{-1}R_\varphi,\ \forall \varphi\in \mathrm{Aut}(G).
    \label{eqn:deduce_eta_same}
\end{equation}
By Lemma~\ref{lemma:group_rep}(iii), \(\eta^i_t\) becomes independent of the player index $i$. From this point, we will use the notation $\eta_t$ instead of $\eta_t^i$, as this quantity is shared by all players. 

We now proceed to derive an expression for $P_t$ in terms of $\eta_t$. 
Substituting the relation \(F^i_te_i = c\eta_t\tilde{P}_tLe_i\)
into equation~\eqref{eqn:defn_P} yields
\begin{equation}
    P_t 
    = (a+q)L + c\eta_t \tilde{P}_t L,
    \label{eqn:P_tilde_P}
\end{equation}
where $\tilde P$ is defined as in equation~\eqref{eqn:tilde_P}.
It remains to solve the above equation for $P_t$. 

Under assumption~(\hyperref[assu:commute]{A.1}), any two symmetric matrices in the collection \(\{L\}\cup \{P_t,\tilde{P}_t:t\in[0,T]\}\) commute, which implies that this collection of matrices can be diagonalized simultaneously. 
Therefore, one can work with the spectra of the matrices rather than directly with the matrices themselves, providing greater convenience for calculations. Let  \(\rho^1_t,\ldots,\rho^N_t\) be the eigenvalues of \(P_t\) and \(\lambda_1,\ldots,\lambda_N\) be the eigenvalues of \(L\), 
equation~\eqref{eqn:P_tilde_P} becomes
\begin{equation}
    \rho^k_t = (a+q)\lambda_k + c\eta_t\lambda_ke^{-\int_t^T \rho^k_s\,\ud s},\ \forall k\in[N].
    \label{eqn:rho_Riccati}
\end{equation}
Multiplying both sides by \(e^{-(T-t)(a+q)\lambda_k}\) and integrating with respect to \(t\) over $[0,T]$ gives
\begin{equation}
    e^{\int_t^T \rho^k_s-(a+q)\lambda_k \,\ud s} = 1 + c\lambda_k \int_t^T \eta_s e^{-(T-s)(a+q)\lambda_k}\,\ud s.
    \label{eqn:calc_rho}
\end{equation}
Taking the logarithm on both sides and differentiating with respect to \(t\) yields
\begin{equation}
    \rho^k_t = (a+q)\lambda_k + \frac{c\lambda_k \eta_te^{-(T-t)(a+q)\lambda_k}}{1 + c\lambda_k \int_t^T \eta_s e^{-(T-s)(a+q)\lambda_k}\,\ud s}.
    \label{eqn:rho}
\end{equation}
Rewriting the above equation in matrix form yields the representation
\begin{equation}
    P_t = (a+q)L + c\eta_tLe^{-(T-t)(a+q)L}\left(I+\int_t^T c\eta_se^{-(T-s)(a+q)L}\,\ud s\; L\right)^{-1}.
    \label{eqn:rep_P_use_eta}
\end{equation}

The above expression for $P_t$ in terms of $\eta_t$ is not entirely satisfactory because $\eta$, as defined in~\eqref{eqn:eta}, depends on $\tilde P_t$, and therefore on $P_t$. Consequently, equation~\eqref{eqn:rep_P_use_eta} does not provide an explicit construction for $P_t$. The next step (in Section~\ref{sec:eta}) is to find a way to characterize and construct $\eta$ based solely on model parameters. 
Once this is achieved, equation~\eqref{eqn:rep_P_use_eta} can be used to construct $P_t$, as detailed in Section~\ref{sec:close_the_loop}.

\subsection{The governing equation for \texorpdfstring{\(\eta\)}{}}\label{sec:eta}

According to the definition of $\eta$ in \eqref{eqn:eta}, the primary challenge in characterizing \(\eta_t\) arises from the inverse term \(\left(I - c\Sigma^i_t\right)^{-1}\).
Combining equations~\eqref{eqn:P_to_trace} and~\eqref{eqn:Sigma} gives
\begin{equation}
    (\Sigma^i_t)^2 = \left[\int_t^T\left(\frac{\Tr(\tilde{P}_sL)}{N}\right)^2\,\ud s\right]\Sigma^i_t.
    \label{eqn:Sigma_sq}
\end{equation}
Using assumption~(\hyperref[assu:power_series]{A.4}) and repeatedly applying the above equation produces
\begin{equation}
    \left(I - c\Sigma^i_t\right)^{-1} = I + c\left[1-c\int_t^T\left(\frac{\Tr(\tilde{P}_sL)}{N}\right)^2\,\ud s\right]^{-1}\Sigma^i_t.
    \label{eqn:inv_simplification}
\end{equation}
Plugging it back into equation~\eqref{eqn:eta} and combining it with equation~\eqref{eqn:P_to_trace} yields
\begin{equation}
    \eta_t = \frac{g_t}{1-c\int_t^T g^2_s\,\ud s},
    \label{eqn:eta_in_g}
\end{equation}
where $g_t$ is a scalar function defined as:
\begin{equation}
    g_t := \frac{\Tr(\tilde{P}_tL)}{N}.
    \label{eqn:g}
\end{equation}

A property of $g_t$ will be repeatedly referred to later, so we derive it here. 
Direct calculations based on equation~\eqref{eqn:P_tilde_P} gives:
\begin{equation}
    \Tr(\tilde{P}_tL) = \frac{1}{c\eta_t}\left[\Tr(P_t) - (a+q)\Tr(L)\right].
    \label{eqn:g_rel_eta}
\end{equation}
Given \(\Tr(P_t) = \sum_{k=1}^N \rho^k_t\), combining it with equation~\eqref{eqn:rho} yields:
\begin{equation}
    Ng_t = \Tr(\tilde{P}_tL) 
    = \sum_{k=1}^N \frac{\lambda_k e^{-(T-t)(a+q)\lambda_k}}{1 + c\lambda_k \int_t^T \eta_s e^{-(T-s)(a+q)\lambda_k}\,\ud s}.
    \label{eqn:calc_g}
\end{equation}

Back to equation~\eqref{eqn:eta_in_g}, by multiplying both sides by \(cg_t\), integrating in \(t\), and taking the exponential, we obtain:
\begin{equation}
    e^{c\int_t^T g_s\eta_s\,\ud s} = e^{\int_t^T\frac{cg^2_s}{1-c\int_s^T g^2_u\,\ud u}\,\ud s}= \frac{1}{1-c\int_t^T g^2_s\,\ud s}.
    \label{eqn:exp_int_g_eta}
\end{equation}
Combining equations~\eqref{eqn:g} and~\eqref{eqn:calc_g}, one has
\begin{align}
    c\int_t^T g_s\eta_s\,\ud s
    = \frac{1}{N}\sum_{k=1}^N \int_t^T \frac{c\lambda_k \eta_se^{-(T-s)(a+q)\lambda_k}}{1 + c\lambda_k \int_s^T \eta_u e^{-(T-u)(a+q)\lambda_k}\,\ud u}\,\ud s.
    \label{eqn:int_g_eta}
\end{align}
Combining equations~\eqref{eqn:exp_int_g_eta}--\eqref{eqn:int_g_eta} and using similar arguments as done for equation~\eqref{eqn:calc_rho} implies
\begin{equation}
    \frac{1}{1-c\int_t^T g^2_s\,\ud s}
    =\left[\prod_{k=1}^N \left(1 + c\lambda_k \int_t^T\eta_se^{-(T-s)(a+q)\lambda_k}\,\ud s\right)\right]^{\frac{1}{N}}.
    \label{eqn:equation_g_eta}
\end{equation}
Plugging equations~\eqref{eqn:g},~\eqref{eqn:calc_g} and~\eqref{eqn:equation_g_eta} into equation~\eqref{eqn:eta_in_g} yields
\begin{multline}
    \eta_t = \frac{1}{N}
    \left(\sum_{k=1}^N \frac{\lambda_k e^{-(T-t)(a+q)\lambda_k}}{1 + c\lambda_k \int_t^T \eta_s e^{-(T-s)(a+q)\lambda_k}\,\ud s}\right)\\
    \left[\prod_{k=1}^N \left(1 + c\lambda_k \int_t^T\eta_se^{-(T-s)(a+q)\lambda_k}\,\ud s \right)\right]^{\frac{1}{N}}.
    \label{eqn:eta_sum_prod}
\end{multline}
Finally, rewriting equation~\eqref{eqn:eta_sum_prod} in the matrix form we obtain:
\begin{multline}
    \eta_t = \frac{1}{N}
    \Tr\left[Le^{-(T-t)(a+q)L}\left(I + cL\int_t^T \eta_se^{-(T-s)(a+q)L}\,\ud s\right)^{-1}\right]\\
    \left[\det\left(I + cL \int_t^T\eta_se^{-(T-s)(a+q)L}\,\ud s \right)\right]^{\frac{1}{N}}.
    \label{eqn:eta_charac}
\end{multline}
Observe that equation~\eqref{eqn:eta_charac} involves only the model parameters and the graph Laplacian $L$ without relying on \(P_t\), which aligns with our goal of characterizing $\eta_t$.
Nevertheless, equation~\eqref{eqn:eta_charac} remains complex and does not easily allow for proving existence and uniqueness. To address this, we apply matrix calculus techniques. The lemma below plays an important role in formulating the governing equation for $\eta_t$.

\begin{lem}[{\cite[Equation~(41)]{petersen2008matrix}}]
For a matrix-valued function \(A:[0,T]\to \R^{N\times N}\), if \(A(t)\) takes values as nonsingular matrices for any \(t\in[0,T]\), then
\begin{equation}
    \frac{\ud\det A(t)}{\ud t} = \Tr \left(\mathrm{adj} [A(t)]\frac{\ud A(t)}{\ud t}\right),
\end{equation}
where \(\mathrm{adj} [B] := B^{-1}\det B\) is the adjugate matrix, well-defined for any nonsingular matrix \(B\).

For a matrix-valued function \(A:\R^{N\times N}\to \R^{N\times N}\), if \(A(X)\) takes values as nonsingular matrices for any \(X\in\R^{N\times N}\), then
\begin{equation}
    \frac{\ud\det A(X)}{\ud X} = \frac{\Tr \left(\mathrm{adj} [A(X)] \ud A(X)\right)}{\ud X}.
\end{equation}
\label{lemma:matrix_calc}
\end{lem}

The next corollary follows easily from Lemma~\ref{lemma:matrix_calc} and the chain rule of matrix calculus.

\begin{cor}
    For a matrix-valued function \(R:[0,T]\to \mathbb{S}^{N\times N}\), if \(R(t)\) takes values as 
    nonsingular matrices for any \(t\in[0,T]\), then
    \begin{equation}
    \frac{\ud[\det (I + cLR(t))]^{\frac{1}{N}}}{\ud t}
    = \frac{1}{N}\Tr\left( [I + cLR(t)]^{-1} cLR'(t)\right)[\det (I+cLR(t))]^{\frac{1}{N}}.
    \end{equation}

    For a matrix-valued function \(\X\ni X\mapsto I+cXL\), \(I+cXL\) takes values as nonsingular symmetric matrices for any \(X\in\X\). This leads to
    \begin{equation}
    \frac{\ud[\det (I+cXL)]^{\frac{1}{N}}}{\ud X}
    = \frac{1}{N}[\det (I+cXL)]^{\frac{1}{N}}(I+cXL)^{-1} cL.
    \end{equation}
    
    \label{cor:matrix_calc}
\end{cor}

Recall the definition of \(Q\) from equation~\eqref{eqn:Q}: $Q(X) := \left[\det\left(I+cXL\right)\right]^{\frac{1}{N}}$. If we define the function \(R:[0,T]\to \mathbb{S}^{N\times N}\) as follows:
\begin{equation}
    R(t) := \int_{T-t}^T\eta_se^{-(T-s)(a+q)L}\,\ud s,
\label{eqn:def_R}
\end{equation}
then, based on equation~\eqref{eqn:eta_charac} and Corollary~\ref{cor:matrix_calc} and the definition of $Q$, $R(t)$ solves the following matrix-valued ODE:
\begin{equation}
     R'(t) = \frac{1}{c}\Tr \left[Q'(R(t)) e^{-t(a+q)L}\right]e^{-t(a+q)L},\quad R(0) = 0,  
\end{equation}
which is exactly equation~\eqref{eqn:second_charac}.
Therefore, once a solution $R$ is found, 
\begin{equation}
    R'(T-t) = \eta_{t}e^{-(T-t)(a+q)L}
    \label{eqn:first_R}
\end{equation}
provides the corresponding value of \(\eta_t\).

We conclude this section by summarizing all the assumptions used so far.
\begin{rem}[Assumptions for heuristic derivations]
    \label{rem:assumptions}
    The following assumptions are made in the heuristic construction:

\begin{enumerate}[({A}.1)]
    \item \label{assu:commute}
     \(P_t\) takes values as symmetric matrices and commutes with \(L\) for any \(t\in[0,T]\). 
    \(P_t\) commutes with \(P_s\) for any \(t,s\in[0,T]\).
    
    \item \label{assu:tilde_P_commute}
    \(\tilde{P}_t\) commutes with \(R_\varphi\), \(\forall t\in[0,T]\), \(\forall \varphi\in \text{Aut}(G)\).

    \item \label{assu:invertible}
    \(\Lambda^i_t\) is invertible, \(\forall t\in[0,T]\), \(\forall i\in[N]\).

    \item \label{assu:power_series}
    The power series \(\left(I - c\Sigma^i_t\right)^{-1} = \sum_{n=0}^\infty c^n(\Sigma^i_t)^n\) is valid.
\end{enumerate}
We emphasize again that they do not need to be verified, as the fact that $F_t^i$ given in \eqref{eqn:closed_form_F} solves \eqref{eqn:Riccati} is directly verified in Section~\ref{subsec:verification}. 
\end{rem}

\subsection{The construction of the Markovian NE}\label{sec:close_the_loop}

Once $\eta_t$ has been constructed based on equations~\eqref{eqn:second_charac}--\eqref{eqn:Q}, the Markovian NE can be determined as follows.
From equations~\eqref{eqn:rep_P_use_eta} and~\eqref{eqn:first_R}, \(P_t\) can be represented in terms of \(R\) as follows:
\begin{equation}
    P_t = (a+q)L + R'(T-t)\;cL[I + R(T-t)\;cL]^{-1}.
    \label{eqn:P_first_charac}
\end{equation}
which coincides with equation~\eqref{eqn:R_to_P}.
By combining equations~\eqref{eqn:solution_Riccati_P}--\eqref{eqn:Sigma} and~\eqref{eqn:inv_simplification}, we obtain
\begin{equation}
    F^i_t = \frac{c}{1 - c\int_t^T\left(\frac{\Tr(\tilde{P}(s)L)}{N}\right)^2\,\ud s}\tilde{P}_tLe_ie_i\transpose L\tilde{P}_t.
    \label{eqn:Fi_first_charac}
\end{equation}
The denominator of \(F^i_t\) can be simplified using equations~\eqref{eqn:g}--\eqref{eqn:g_rel_eta}: 
\begin{equation}
    1 - c\int_t^T\left(\frac{\Tr(\tilde{P}_sL)}{N}\right)^2\,\ud s
    = \frac{1}{cN\eta^2_t}[\Tr(P_t) - (a+q)\Tr(L)].
    \label{eqn:denom_fi}
\end{equation}
Plugging equations~\eqref{eqn:P_tilde_P} and~\eqref{eqn:denom_fi} into equation~\eqref{eqn:Fi_first_charac} leads to the result~\eqref{eqn:closed_form_F}.

To this point, we've completed the heuristic derivation of the Markvoian NE for the game \eqref{eqn:state_dynamics}--\eqref{def:J} on vertex-transitive graphs. Specifically, we provide expressions for \(F^i_t\) that depend on \(P_t\), and \(P_t\) that depends on the solution \(R\) to equation~\eqref{eqn:second_charac}.
Next, we aim to justify the well-posedness of equation~\eqref{eqn:second_charac} as well as verifying that equation~\eqref{eqn:closed_form_F} does provide an exact solution to the coupled Riccati system~\eqref{eqn:Riccati}. 

\end{appendices}


\begin{thebibliography}{GGWX25}

\bibitem[AKFIJ12]{abou2012matrix}
Hisham Abou-Kandil, Gerhard Freiling, Vlad Ionescu, and Gerhard Jank,
  \emph{{M}atrix {R}iccati {E}quations in {C}ontrol and {S}ystems {T}heory},
  Birkh{\"a}user, 2012.

\bibitem[BC18]{briani2018stable}
Ariela Briani and Pierre Cardaliaguet, \emph{{S}table {S}olutions in
  {P}otential {M}ean {F}ield {G}ame {S}ystems}, Nonlinear Differential
  Equations and Applications NoDEA \textbf{25} (2018), 1--26.

\bibitem[Ber05]{berger2005fictitious}
Ulrich Berger, \emph{{F}ictitious {P}lay in 2$\times$ n {G}ames}, Journal of
  Economic Theory \textbf{120} (2005), no.~2, 139--154.

\bibitem[BJP00]{blanes2000approximate}
Sergio Blanes, Lucas J{\'o}dar, and Enrique Ponsoda, \emph{{A}pproximate
  {S}olutions with {A} {P}riori {E}rror {B}ounds for {C}ontinuous {C}oefficient
  {M}atrix {R}iccati {E}quations}, Mathematical and Computer Modelling
  \textbf{31} (2000), no.~2-3, 1--15.

\bibitem[Bro49]{brown1949some}
George~W Brown, \emph{{S}ome {N}otes on {C}omputation of {G}ames {S}olutions},
  Tech. report, RAND Corporation, 1949.

\bibitem[Bro51]{brown1951iterative}
\bysame, \emph{{I}terative {S}olution of {G}ames by {F}ictitious {P}lay},
  Activity Analysis of Production and Allocation \textbf{13} (1951), no.~1,
  374--376.

\bibitem[BSYY16]{bensoussan2016linear}
Alain Bensoussan, KCJ Sung, Sheung Chi~Phillip Yam, and Siu-Pang Yung,
  \emph{{L}inear-{Q}uadratic {M}ean {F}ield {G}ames}, Journal of Optimization
  Theory and Applications \textbf{169} (2016), 496--529.

\bibitem[BWZ23]{bayraktar2023propagation}
Erhan Bayraktar, Ruoyu Wu, and Xin Zhang, \emph{{P}ropagation of {C}haos of
  {F}orward--{B}ackward {S}tochastic {D}ifferential {E}quations with {G}raphon
  {I}nteractions}, Applied Mathematics \& Optimization \textbf{88} (2023),
  no.~1, 25.

\bibitem[Car16]{carmona2016lectures}
Ren{\'e} Carmona, \emph{{L}ectures on {BSDE}s, {S}tochastic {C}ontrol, and
  {S}tochastic {D}ifferential {G}ames with {F}inancial {A}pplications}, SIAM,
  2016.

\bibitem[CCK20]{chu2020mult}
Tianshu Chu, Sandeep Chinchali, and Sachin Katti, \emph{{M}ulti-{A}gent
  {R}einforcement {L}earning for {N}etworked {S}ystem {C}ontrol}, arXiv
  preprint arXiv:2004.01339 (2020).

\bibitem[CD18]{carmona2018probabilistic}
Ren{\'e} Carmona and Fran{\c{c}}ois Delarue, \emph{{P}robabilistic {T}heory of
  {M}ean {F}ield {G}ames with {A}pplications i-ii}, Springer, 2018.

\bibitem[CFS15]{carmona2013mean}
Ren{\'e} Carmona, Jean-Pierre Fouque, and Li-Hsien Sun, \emph{{M}ean {F}ield
  {G}ames and {S}ystemic {R}isk}, Communications in Mathematical Sciences
  \textbf{13} (2015), no.~4, 911--933.

\bibitem[CH17]{cardaliaguet2017learning}
Pierre Cardaliaguet and Saeed Hadikhanloo, \emph{{L}earning in {M}ean {F}ield
  {G}ames: {T}he {F}ictitious {P}lay}, ESAIM: Control, Optimisation and
  Calculus of Variations \textbf{23} (2017), no.~2, 569--591.

\bibitem[CH19]{caines2019graphon}
Peter~E Caines and Minyi Huang, \emph{{G}raphon {M}ean {F}ield {G}ames and the
  {GMFG} {E}quations: $\varepsilon$-{N}ash {E}quilibria}, 2019 IEEE 58th
  Conference on Decision and Control (CDC), IEEE, 2019, pp.~286--292.

\bibitem[CH21]{caines2021graphon}
\bysame, \emph{{G}raphon {M}ean {F}ield {G}ames and their {E}quations}, SIAM
  Journal on Control and Optimization \textbf{59} (2021), no.~6, 4373--4399.

\bibitem[Chu97]{chung1997spectral}
Fan~RK Chung, \emph{{S}pectral {G}raph {T}heory}, vol.~92, American
  Mathematical Soc., 1997.

\bibitem[CK21]{cui2021learning}
Kai Cui and Heinz Koeppl, \emph{{L}earning {G}raphon {M}ean {F}ield {G}ames and
  {A}pproximate {N}ash {E}quilibria}, International Conference on Learning
  Representations, 2021.

\bibitem[CSY20]{capponi2020dynamic}
Agostino Capponi, Xu~Sun, and David~D Yao, \emph{{A} {D}ynamic {N}etwork
  {M}odel of {I}nterbank {L}ending—{S}ystemic {R}isk and {L}iquidity
  {P}rovisioning}, Mathematics of Operations Research \textbf{45} (2020),
  no.~3, 1127--1152.

\bibitem[DIP23]{druagan2023game}
Vasile Dr{\u{a}}gan, Ivan~Ganchev Ivanov, and Ioan-Lucian Popa, \emph{{A}
  {G}ame-{T}heoretic {M}odel for a {S}tochastic {L}inear {Q}uadratic {T}racking
  {P}roblem}, Axioms \textbf{12} (2023), no.~1, 76.

\bibitem[EHJ17]{han2017deep}
Weinan E, Jiequn Han, and Arnulf Jentzen, \emph{{D}eep {L}earning-{B}ased
  {N}umerical {M}ethods for {H}igh-{D}imensional {P}arabolic {P}artial
  {D}ifferential {E}quations and {B}ackward {S}tochastic {D}ifferential
  {E}quations}, Communications in Mathematics and Statistics \textbf{5} (2017),
  no.~4, 349--380.

\bibitem[FCK23]{fabian2023learning}
Christian Fabian, Kai Cui, and Heinz Koeppl, \emph{{L}earning {S}parse
  {G}raphon {M}ean {F}ield {G}ames}, International Conference on Artificial
  Intelligence and Statistics, PMLR, 2023, pp.~4486--4514.

\bibitem[Fri72]{friedman1972stochastic}
Avner Friedman, \emph{{S}tochastic {D}ifferential {G}ames}, {J}ournal of
  {D}ifferential {E}quations \textbf{11} (1972), no.~1, 79--108.

\bibitem[FS19]{feinstein2019dynamic}
Zachary Feinstein and Andreas Sojmark, \emph{{A} {D}ynamic {D}efault
  {C}ontagion {M}odel: {F}rom {E}isenberg-{N}oe to the {M}ean {F}ield}, arXiv
  preprint arXiv:1912.08695 (2019).

\bibitem[GCH23]{gao2023lqg}
Shuang Gao, Peter~E Caines, and Minyi Huang, \emph{{LQG} {G}raphon {M}ean
  {F}ield {G}ames: {A}nalysis via {G}raphon {I}nvariant {S}ubspaces}, IEEE
  Transactions on Automatic Control (2023).

\bibitem[GGWX25]{gu2025mean}
Haotian Gu, Xin Guo, Xiaoli Wei, and Renyuan Xu, \emph{{M}ean-{F}ield
  {M}ultiagent {R}einforcement {L}earning: {A} {D}ecentralized {N}etwork
  {A}pproach}, Mathematics of Operations Research \textbf{50} (2025), no.~1,
  506--536.

\bibitem[GR01]{godsil2001algebraic}
Chris Godsil and Gordon~F Royle, \emph{{A}lgebraic {G}raph {T}heory}, vol. 207,
  Springer Science \& Business Media, 2001.

\bibitem[Hah99]{hahn1999the}
Sunku Hahn, \emph{{T}he {C}onvergence of {F}ictitious {P}lay in 3$\times$ 3
  {G}ames with {S}trategic {C}omplementarities}, Economics Letters \textbf{64}
  (1999), no.~1, 57--60.

\bibitem[HCH{\etalchar{+}}20]{hassija2020dagiov}
Vikas Hassija, Vinay Chamola, Guangjie Han, Joel~JPC Rodrigues, and Mohsen
  Guizani, \emph{{DAGI}o{V}: {A} {F}ramework for {V}ehicle to {V}ehicle
  {C}ommunication using {D}irected {A}cyclic {G}raph and {G}ame {T}heory}, IEEE
  Transactions on Vehicular Technology \textbf{69} (2020), no.~4, 4182--4191.

\bibitem[HE16]{han2016deep}
Jiequn Han and Weinan E, \emph{{D}eep {L}earning {A}pproximation for
  {S}tochastic {C}ontrol {P}roblems}, Deep Reinforcement Learning Workshop,
  NIPS (2016).

\bibitem[HH20]{han2020deep}
Jiequn Han and Ruimeng Hu, \emph{{D}eep {F}ictitious {P}lay for {F}inding
  {M}arkovian {N}ash {E}quilibrium in {M}ulti-{A}gent {G}ames}, Mathematical
  and scientific machine learning, PMLR, 2020, pp.~221--245.

\bibitem[HHL22]{han2022convergence}
Jiequn Han, Ruimeng Hu, and Jihao Long, \emph{{C}onvergence of {D}eep
  {F}ictitious {P}lay for {S}tochastic {D}ifferential {G}ames}, Frontiers of
  Mathematical Finance \textbf{1} (2022), no.~2.

\bibitem[HHL24]{han2024learning}
\bysame, \emph{{L}earning {H}igh-{D}imensional {M}c{K}ean--{V}lasov
  {F}orward-{B}ackward {S}tochastic {D}ifferential {E}quations with {G}eneral
  {D}istribution {D}ependence}, SIAM Journal on Numerical Analysis \textbf{62}
  (2024), no.~1, 1--24.

\bibitem[HKSE18]{hafezalkotob2018cooperation}
Ashkan Hafezalkotob, Maryam Khodabakhsh, Abbas Saghaei, and Maryam Eshghipour,
  \emph{{C}ooperation of {A}dvertising {C}ompanies in {S}ocial {N}etworks: {A}
  {G}raph and {G}ame {T}heory {A}pproaches}, Computers \& Industrial
  Engineering \textbf{125} (2018), 212--220.

\bibitem[HL23]{hu2023recent}
Ruimeng Hu and Mathieu Lauriere, \emph{{R}ecent {D}evelopments in {M}achine
  {L}earning {M}ethods for {S}tochastic {C}ontrol and {G}ames}, arXiv preprint
  arXiv:2303.10257 (2023).

\bibitem[HLW06]{hoory2006expander}
Shlomo Hoory, Nathan Linial, and Avi Wigderson, \emph{{E}xpander {G}raphs and
  their {A}pplications}, Bulletin of the American Mathematical Society
  \textbf{43} (2006), no.~4, 439--561.

\bibitem[HLZ]{hu2024deep}
Ruimeng Hu, Jihao Long, and Haosheng Zhou, \emph{{F}inite-{A}gent {S}tochastic
  {D}ifferential {G}ames on {L}arge {G}raphs: {II}. {M}achine {L}earning
  {A}lgorithms}, In preparation.

\bibitem[Hu21]{hu2019deep}
Ruimeng Hu, \emph{{D}eep {F}ictitious {P}lay for {S}tochastic {D}ifferential
  {G}ames}, Communications in Mathematical Sciences \textbf{19} (2021), no.~2,
  325--353.

\bibitem[Kre12]{kress2012numerical}
Rainer Kress, \emph{{N}umerical {A}nalysis}, vol. 181, Springer Science \&
  Business Media, 2012.

\bibitem[Lax07]{lax2007linear}
Peter~D Lax, \emph{{L}inear {A}lgebra and its {A}pplications}, vol.~78, John
  Wiley \& Sons, 2007.

\bibitem[LL07]{lasry2007mean}
Jean-Michel Lasry and Pierre-Louis Lions, \emph{{M}ean {F}ield {G}ames},
  Japanese Journal of Mathematics \textbf{2} (2007), no.~1, 229--260.

\bibitem[LPS88]{lubotzky1988ramanujan}
Alexander Lubotzky, Ralph Phillips, and Peter Sarnak, \emph{Ramanujan
  {G}raphs}, Combinatorica \textbf{8} (1988), no.~3, 261--277.

\bibitem[LRS24]{li2024mean}
Zongxi Li, A~Max Reppen, and Ronnie Sircar, \emph{{A} {M}ean {F}ield {G}ames
  {M}odel for {C}ryptocurrency {M}ining}, Management Science \textbf{70}
  (2024), no.~4, 2188--2208.

\bibitem[LS22]{lacker2022case}
Daniel Lacker and Agathe Soret, \emph{{A} {C}ase {S}tudy on {S}tochastic
  {G}ames on {L}arge {G}raphs in {M}ean {F}ield and {S}parse {R}egimes},
  Mathematics of Operations Research \textbf{47} (2022), no.~2, 1530--1565.

\bibitem[Luk71]{lukes1971equilibrium}
DL~Lukes, \emph{{E}quilibrium {F}eedback {C}ontrol in {L}inear {G}ames with
  {Q}uadratic {C}osts}, SIAM Journal on Control \textbf{9} (1971), no.~2,
  234--252.

\bibitem[MH21]{min2021signatured}
Ming Min and Ruimeng Hu, \emph{{S}ignatured {D}eep {F}ictitious {P}lay for
  {M}ean {F}ield {G}ames with {C}ommon {N}oise}, International Conference on
  Machine Learning, PMLR, 2021, pp.~7736--7747.

\bibitem[MPPS05]{mavronicolas2005graph}
Marios Mavronicolas, Vicky Papadopoulou, Anna Philippou, and Paul Spirakis,
  \emph{{A} {G}raph-{T}heoretic {N}etwork {S}ecurity {G}ame}, International
  Workshop on Internet and Network Economics, Springer, 2005, pp.~969--978.

\bibitem[MR20]{majeed2020graph}
Abdul Majeed and Ibtisam Rauf, \emph{{G}raph {T}heory: A {C}omprehensive
  {S}urvey about {G}raph {T}heory {A}pplications in {C}omputer {S}cience and
  {S}ocial {N}etworks}, Inventions \textbf{5} (2020), no.~1, 10.

\bibitem[MV07]{mason2007graph}
Oliver Mason and Mark Verwoerd, \emph{{G}raph {T}heory and {N}etworks in
  {B}iology}, IET systems biology \textbf{1} (2007), no.~2, 89--119.

\bibitem[MWZZ15]{ma2015well}
Jin Ma, Zhen Wu, Detao Zhang, and Jianfeng Zhang, \emph{{O}n {W}ell-{P}osedness
  of {F}orward--{B}ackward {SDE}s—{A} {U}nified {A}pproach}, The Annals of
  Applied Probability \textbf{25} (2015), no.~4, 2168--2214.

\bibitem[Nas50]{nash1950equilibrium}
John~F Nash, \emph{{E}quilibrium {P}oints in \(n\)-{P}erson {G}ames},
  Proceedings of the National Academy of Sciences \textbf{36} (1950), no.~1,
  48--49.

\bibitem[Nas51]{nash1950non}
\bysame, \emph{{N}on-{C}ooperative {G}ames}, Annals of Mathematics \textbf{54}
  (1951), no.~2, 286--295.

\bibitem[NS20]{nadtochiy2020mean}
Sergey Nadtochiy and Mykhaylo Shkolnikov, \emph{{M}ean {F}ield {S}ystems on
  {N}etworks, with {S}ingular {I}nteraction through {H}itting {T}imes}, The
  Annals of Probability \textbf{48} (2020), no.~3, 1520--1556.

\bibitem[PC79]{papavassilopoulos1979existence}
G~Papavassilopoulos and J~Cruz, \emph{{O}n the {E}xistence of {S}olutions to
  {C}oupled {M}atrix {R}iccati {D}ifferential {E}quations in {L}inear
  {Q}uadratic {N}ash {G}ames}, IEEE Transactions on Automatic Control
  \textbf{24} (1979), no.~1, 127--129.

\bibitem[PO84]{papavassilopoulos1984linear}
GP~Papavassilopoulos and GJ~Olsder, \emph{{O}n the {L}inear-{Q}uadratic,
  {C}losed-{L}oop, {N}o-{M}emory {N}ash {G}ame}, Journal of Optimization Theory
  and Applications \textbf{42} (1984), no.~4, 551--560.

\bibitem[PP08]{petersen2008matrix}
Kaare~Brandt Petersen and Michael~Syskind Pedersen, \emph{{T}he {M}atrix
  {C}ookbook}, Technical University of Denmark \textbf{7} (2008), no.~15, 510.

\bibitem[RA11]{riaz2011applications}
Ferozuddin Riaz and Khidir~M Ali, \emph{{A}pplications of {G}raph {T}heory in
  {C}omputer {S}cience}, 2011 Third International Conference on Computational
  Intelligence, Communication Systems and Networks, IEEE, 2011, pp.~142--145.

\bibitem[Rob51]{robinson1951an}
Julia Robinson, \emph{{A}n {I}terative {M}ethod of {S}olving a {G}ame}, Annals
  of Mathematics \textbf{54} (1951), no.~2, 296--301.

\bibitem[Sha63]{shapley1963some}
Lloyd~S Shapley, \emph{{S}ome {T}opics in {T}wo-{p}erson {G}ames}, RAND
  Corporation, 1963.

\bibitem[SY20]{sun2020stochastic}
Jingrui Sun and Jiongmin Yong, \emph{{S}tochastic {L}inear-{Q}uadratic
  {O}ptimal {C}ontrol {T}heory: {D}ifferential {G}ames and {M}ean-{F}ield
  {P}roblems}, Springer Nature, 2020.

\bibitem[TA92]{tang1992vertex}
K~Wendy Tang and Bruce~W Arden, \emph{{V}ertex-{T}ransitivity and {R}outing for
  {C}ayley {G}raphs in {GCR} {R}epresentations}, Proceedings of the 1992
  ACM/SIGAPP Symposium on Applied Computing: Technological Challenges of the
  1990's, 1992, pp.~1180--1187.

\bibitem[Tes12]{teschl2012ordinary}
Gerald Teschl, \emph{{O}rdinary {D}ifferential {E}quations and {D}ynamical
  {S}ystems}, vol. 140, American Mathematical Soc., 2012.

\bibitem[VNM07]{von2007theory}
John Von~Neumann and Oskar Morgenstern, \emph{{T}heory of {G}ames and
  {E}conomic {B}ehavior: 60th anniversary commemorative edition}, {T}heory of
  {G}ames and {E}conomic {B}ehavior, Princeton university press, 2007.

\bibitem[WH96]{wanner1996solving}
Gerhard Wanner and Ernst Hairer, \emph{{S}olving {O}rdinary {D}ifferential
  {E}quations ii}, vol. 375, Springer Berlin Heidelberg New York, 1996.

\bibitem[XBH{\etalchar{+}}22]{xuan2022pandemic}
Yao Xuan, Robert Balkin, Jiequn Han, Ruimeng Hu, and Hector~D Ceniceros,
  \emph{{P}andemic {C}ontrol, {G}ame {T}heory, and {M}achine {L}earning},
  Notices of the American Mathematical Society \textbf{69} (2022), no.~11.

\bibitem[Yon02]{yong2002leader}
Jiongmin Yong, \emph{A {L}eader-{F}ollower {S}tochastic {L}inear {Q}uadratic
  {D}ifferential {G}ame}, SIAM Journal on Control and Optimization \textbf{41}
  (2002), no.~4, 1015--1041.

\bibitem[ZSZ20]{zhang2020stochastic}
Junhui Zhang, Jitao Sun, and Chengcui Zhang, \emph{{S}tochastic {G}ame in
  {L}inear {Q}uadratic {G}aussian {C}ontrol for {W}ireless {N}etworked
  {C}ontrol {S}ystems under {D}o{S} {A}ttacks}, IEEE Transactions on Systems,
  Man, and Cybernetics: Systems \textbf{52} (2020), no.~2, 902--910.

\bibitem[ZYL{\etalchar{+}}18]{zhang2018full}
Kaiqing Zhang, Zhuoran Yang, Han Liu, Tong Zhang, and Tamer Basar,
  \emph{{F}ully {D}ecentralized {M}ulti-{A}gent {R}einforcement {L}earning with
  {N}etworked {A}gents}, International Conference on Machine Learning, PMLR,
  2018, pp.~5872--5881.

\end{thebibliography}
\end{document}